\documentclass[dvipsnames]{amsart}
\usepackage[utf8]{inputenc}
\linespread{1.3}
\setlength{\belowdisplayskip}{0pt}
\usepackage[dvips]{graphics}
\usepackage{times}
\usepackage{mathrsfs}
\usepackage[T1]{fontenc}
\usepackage{latexsym}
\usepackage{epsfig}
\usepackage{hyperref,tikz}
\usepackage{amsmath,amsfonts,amsthm,amssymb,amscd}
\usepackage{pstricks} 
\usepackage[myheadings]{fullpage}
\usepackage{array}
\usepackage{multirow}
\usepackage{longtable}
\usepackage{xcolor}
\usepackage{graphicx}
\usepackage{multicol}
\pagestyle{plain}

\makeatletter
\newcommand{\addresseshere}{%
  \enddoc@text\let\enddoc@text\relax
}
\makeatother
\usepackage{amsrefs}
\usepackage{listings}

\definecolor{codegray}{rgb}{0.5,0.5,0.5}
\definecolor{lightdarkgreen}{rgb}{0.65,0.78,0.4}
\lstdefinestyle{mystyle}{
  backgroundcolor=\color{White},   
  commentstyle= \color{codegray},
  keywordstyle=\color{PineGreen},
  numberstyle=\tiny\color{Black},
  stringstyle=\color{lightdarkgreen},
  basicstyle=\ttfamily\fontseries{sb}\footnotesize,
  breakatwhitespace=false,         
  breaklines=true,                 
  captionpos=b,                    
  keepspaces=true,                 
  numbers=left,                    
  numbersep=5pt,                  
  showspaces=false,                
  showstringspaces=false,
  showtabs=false,                  
  tabsize=2
}

\lstset{style=mystyle}
\usepackage{mmacells}

\mmaDefineMathReplacement[≤]{<=}{\leq}
\mmaDefineMathReplacement[≥]{>=}{\geq}
\mmaDefineMathReplacement[≠]{!=}{\neq}
\mmaDefineMathReplacement[→]{->}{\to}[2]
\mmaDefineMathReplacement[⧴]{:>}{:\hspace{-.2em}\to}[2]
\mmaDefineMathReplacement{∉}{\notin}
\mmaDefineMathReplacement{∞}{\infty}

\mmaSet{
  morefv={gobble=2},
  linklocaluri=mma/symbol/definition:#1,
  morecellgraphics={yoffset=1.9ex}
}

\setcounter{MaxMatrixCols}{30}

\newtheorem{theorem}{Theorem}[section]

\newtheorem{definition}[theorem]{Definition}
\newtheorem{example}[theorem]{Example}
\newtheorem{lemma}[theorem]{Lemma}

\newenvironment{customthm}[1]
  {\innercustomthm}
  {\endinnercustomthm}

\newcommand{\kommentar}[1]{}

\newcommand{\NN}{\mathbb{N}}
\newcommand{\ZZ}{\mathbb{Z}}







\newcommand\be{\begin{equation}}
\newcommand\ee{\end{equation}}
\newcommand\bea{\begin{eqnarray}}
\newcommand\eea{\end{eqnarray}}
\newcommand\bi{\begin{itemize}}
\newcommand\ei{\end{itemize}}
\newcommand\ben{\begin{enumerate}}
\newcommand\een{\end{enumerate}}
\newcommand\bc{\begin{center}}
\newcommand\ec{\end{center}}
\newcommand\ba{\begin{array}}
\newcommand\ea{\end{array}}




\newcommand{\Z}{\ensuremath{\mathbb{Z}}}

\newcommand{\N}{\mathbb{N}}











\newtheorem{thm}{Theorem}[section]

\theoremstyle{definition}
\newtheorem{rek}[thm]{Remark}




\newcommand{\ga}{\alpha}

\newcommand{\gG}{\gamma}





\numberwithin{equation}{section}
\textwidth= 6in


\newcommand{\floor}[1]{\left \lfloor #1 \right \rfloor}
\newcommand{\al}[1]{\alpha_{#1}}
\newcommand{\bt}[1]{\beta_{#1}}
\newcommand{\w}[0]{\varpi}

\newcommand{\ccc}[0]{\mathfrak{sp}_6(\mathbb{C})}
\newcommand{\g}[0]{\mathfrak{g}}

\newcommand{\e}[0]{\varepsilon}

\newcommand{\A}{\mathcal{A}}
\newcommand{\highestroot}{\tilde{\alpha}}

\newcommand{\poly}[2]{#1_{#2}}

\setcounter{page}{1}

\title{On Kostant's weight $q$-multiplicity formula for $\ccc$}

\author{Pamela E. Harris}\thanks{P.~E. Harris was supported by a Karen Uhlenbeck EDGE Fellowship. All authors thank Williams College for research funding support throughout the completion of this project.
}
\author{Peter Hollander}
\author{Daniel C. Qin}
\author{Maria Rodriguez-Hertz}
\address[P.~E.~Harris, P.~Hollander, M.~Rodriguez-Hertz]{Williams College,
Department of Mathematics and Statistics, Williamstown, MA, USA}

\email[P.~E.~Harris]{\textcolor{blue}{\href{mailto:peh2@williams.edu}{peh2@williams.edu}}}

\email[P.~Hollander]{\textcolor{blue}{\href{mailto:p.hollander99@gmail.com}{p.hollander99@gmail.com}}}

\email[M.~Rodriguez-Hertz]{\textcolor{blue}{\href{mailto:marurh20@gmail.com}{marurh20@gmail.com}}}

\address[D.~C.~Qin]{Kalamazoo College, Department of Mathematics,
Kalamazoo, MI, USA}

\email[D.~C.~Qin]{\textcolor{blue}{\href{mailto:dcqin17@gmail.com}{dcqin17@gmail.com}}}

\date{July 2021}

\keywords{$q$-analog of Kostant's partition function, $q$-weight multiplicities} 

\subjclass[2010]{17B10} 

\begin{document}

\begin{abstract}
Kostant’s weight $q$-multiplicity formula is an alternating sum over a finite group known as the Weyl group, whose terms involve the $q$-analog of Kostant’s partition function. The $q$-analog of the partition function is a polynomial-valued function defined by $\wp_q(\xi)=\sum_{i=0}^k c_i q^i$, where $c_i$ is the number of ways the weight $\xi$ can be written as a sum of exactly $i$ positive roots of a Lie algebra $\mathfrak{g}$. The evaluation of the $q$-multiplicity formula at $q = 1$ recovers the multiplicity of a weight in an irreducible highest weight representation of $\mathfrak{g}$. In this paper, we specialize to the Lie algebra $\mathfrak{sp}_6(\mathbb{C})$ and we provide a closed formula for the $q$-analog of Kostant's partition function, 
which extends recent results of Shahi, Refaghat, and Marefat. We also describe the supporting sets of the multiplicity formula (known as the Weyl alternation sets of $\mathfrak{sp}_6(\mathbb{C})$), and use these results to provide a closed formula for the $q$-multiplicity for any pair of dominant integral weights of $\mathfrak{sp}_6(\mathbb{C})$. Throughout this work, we provide code to facilitate these computations. 
\end{abstract}

\maketitle


\section{Introduction}\label{sec:intro}
We study a combinatorial problem arising in the representation theory of simple classical Lie algebras. 
To begin, we recall the theorem of the highest weight, which asserts that every irreducible complex representation of a simple Lie algebra $\mathfrak{g}$ arises as a highest weight representation with dominant integral weight $\lambda$. In this paper, we denote such a representation by $L(\lambda)$. In this setting it is of interest to compute the multiplicity of a weight $\mu$ in $L(\lambda)$ which can be obtained by using Kostant's weight multiplicity formula \cite{KMF}:
\begin{align}
    m(\lambda,\mu) &= \sum_{\sigma \in W} (-1)^{\ell(\sigma)} \wp(\sigma(\lambda+\rho)-(\mu+\rho)).\label{eq:KWMF}
\end{align}
In Equation \eqref{eq:KWMF}, $W$ denotes the Weyl group of $\g$, a finite group generated by reflections orthogonal to a set of simple roots of $\g$, $\ell(\sigma)$ denotes the length of $\sigma \in W$, and $\rho = \frac{1}{2} \sum_{\alpha \in \Phi^+} \alpha$ with $\Phi^+$ being a set of positive roots of $\g$. 
The terms of the alternating sum in Equation~\eqref{eq:KWMF} are values of Kostant's partition function, which we denote by $\wp$, and which counts the number of ways to express its input as a nonnegative integral linear combination of the positive roots in $\Phi^+$. 

A well-known generalization of Kostant's weight multiplicity formula, due to Lusztig \cite{LL}, is known as the $q$-analog of Kostant's weight multiplicity formula, and it replaces the partition function $\wp$ with its $q$-analog, denoted $\wp_q$. The $q$-analog of the partition function is defined as follows: For a weight $\xi$, $\wp_q$ is a polynomial-valued function: \[
\wp_q(\xi)=c_0+c_1q+\cdots+c_kq^k,\]
where $c_i$ denotes the number of ways to write $\xi$ as a nonnegative integral sum of exactly $i$ positive roots in $\Phi^+$. In this way $\wp_q(\xi)$ when evaluated at $q=1$ recovers the value of the partition function $\wp(\xi)$.

Finding general closed formulas for Kostant's partition function  is very difficult. Most results in the literature are either formulas for low rank Lie algebras or they provide bijections to other families of combinatorial objects. For example, among the results known include closed formulas for the value of Kostant's partition function for Lie algebras of types $A_2$, $A_3$, $B_2$, $C_2$, $C_3$, and $\mathfrak{g}_2$ \cites{Tarski,SRM,CGHLMRK, HL};
there also exist formulas for the $q$-analog of Kostant's partition function for Lie algebras of types $A_2$, $A_3$, $C_2$, and $\mathfrak{g}_2$ and for the classical Lie algebras when the input is the highest root \cites{GHLMMRKT, HL, HRSS, CGHLMRK, HIO}. In 2020, Benedetti, Hanusa, Harris, Morales, and Simpson established bijections between Kostant's partition function (for all simple algebras) and sets of multiplex juggling sequences \cite{Simpson}. Also in 2020, Harris, Rahmoeller, and Schneider provided an approach to the study of the asymptotic behavior of Kostant's partition function \cite{HRS}.

Thus, given the lack of general formulas for Kostant's partition function and the reliance of the weight multiplicity formula on the value of the partition function, there are also few formulas for weight multiplicities. 
Results in this direction are summarized in Table~\ref{tab:known-multiplicities}.
Moreover, there has been some success in implementing computer programs to compute weight $q$-multiplicities for the simple Lie algebras \cites{HIS}.

In spite of of these complications, it has been computationally observed that the number of terms contributing nontrivially to Kostant's weight multiplicity formula is often much smaller than the size of the entire Weyl group, which is either factorial or exponential-factorial in order. In light of this, one can study the elements of the Weyl group that contribute nontrivially to the multiplicity formula. That is, given weights $\lambda$ and $\mu$ define the Weyl alternation set as \[\A(\lambda, \mu)=\{\sigma\in W\,:\;\wp(\sigma(\lambda+\rho)-\mu-\rho)>0 \},\] with the goal of describing and enumerating the sets $\A(\lambda,\mu)$ as the weights $\lambda$ and $\mu$ vary among the (dominant) fundamental weight lattice.
Work to describe and enumerate the Weyl alternation sets has considered cases where $\lambda$ is the highest root or a sum of the simple roots of a simple Lie algebra of types $A_r$, $B_r$, $C_r$, and $D_r$, see \cites{HIO,ChangHarrisInsko,HIS,HIW,Hthesis,GHLMMRKT}. The Weyl alternation sets of $A_2$, $A_3$, and all other simple Lie algebras of rank 2 have been described in \cites{HLM,LRRRHTU}.

In the current work, we specialize our study to $\mathfrak{sp}_6(\mathbb{C})$ (the Lie algebra of type $C_3$) and extend the results of Shahi, Refaghat, and Marefat~\cite{SRM}*{Theorem 1.2} by first providing a new formula for the $q$-analog of Kostant's partition function, shown below and proven in Section~\ref{sec:KPF}.

\begin{theorem} \label{thm:q-konstants-equation}
Let $\mu = m\alpha_1 + n\alpha_2 + k\alpha_3$, with $m,n,k\in\mathbb{N}$, be a weight of $\ccc$. Then, the $q$-analog of Kostant's partition function, $\wp_q(m\alpha_1 + n\alpha_2 + k\alpha_3)$, can be calculated using the following formula: 

\begin{equation*} 
    q^{m+n+k}
    \sum_{h=0}^{\hat{h}}
    \sum_{g=0}^{\hat{g}}
    \sum_{f=0}^{\hat{f}}
    \sum_{i=0}^{\hat{i}}
    \sum_{d=0}^{\hat{d}}
    \sum_{e=0}^{\hat{e}} \left(\frac{1}{q}\right)^{d+e+2f+3g+4h+2i}
\end{equation*}
\noindent where 
\begingroup
\allowdisplaybreaks
\begin{align*}
    &\hat{d} = \min\{m-2h-g-f, n-2h-2g-f-2i\},\\
    &\hat{e} = \min\{n-2h-2g-f-2i-d, k-h-g-f-i\},\\
    &\hat{f} = \min\{m-2h-g, n-2h-2g, k-h-g\},\\
    &\hat{g} = \min\left\{m-2h, \floor{\frac{n-2h}{2}}, k-h\right\},\\
    &\hat{h} = \min\left\{\floor{\frac{m}{2}}, \floor{\frac{n}{2}}, k\right\}, \text{ and }\\
    &\hat{i} = \min\left\{\floor{\frac{n-2h-2g-f}{2}}, k-h-g-f\right\}.
\end{align*}
\endgroup
\end{theorem}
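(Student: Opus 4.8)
\noindent\emph{Proof strategy.} Since Kostant's $q$-partition function in low rank is a generating function for lattice points in a polytope, the plan is to write $\wp_q(\mu)$ explicitly as such a generating function and then equip the lattice-point set with a triangular (``peeling'') system of coordinates whose feasibility inequalities are exactly the nested minima $\hat h,\hat g,\hat f,\hat i,\hat d,\hat e$ in the statement. First I would list the nine positive roots of $\ccc$ (the Lie algebra of type $C_3$), namely
\[
\alpha_1,\ \alpha_2,\ \alpha_3,\ \alpha_1+\alpha_2,\ \alpha_2+\alpha_3,\ \alpha_1+\alpha_2+\alpha_3,\ 2\alpha_2+\alpha_3,\ \alpha_1+2\alpha_2+\alpha_3,\ 2\alpha_1+2\alpha_2+\alpha_3,
\]
of heights $1,1,1,2,2,3,3,4,5$, and observe that a representation of $\mu=m\alpha_1+n\alpha_2+k\alpha_3$ as a nonnegative integral combination of positive roots is precisely a nonnegative integer solution $(x_1,\dots,x_9)$ (the $x_j$ being the multiplicities of the roots in the order just listed) of the three coordinate equations
\[
x_1+x_4+x_6+x_8+2x_9=m,\quad x_2+x_4+x_5+x_6+2x_7+2x_8+2x_9=n,\quad x_3+x_5+x_6+x_7+x_8+x_9=k,
\]
whose number of parts is $x_1+\cdots+x_9$. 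Thus $\wp_q(\mu)=\sum q^{x_1+\cdots+x_9}$, summed over all such solutions.

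Next I would parametrize the solution set by stripping off the multiplicities of the non-simple roots in order of decreasing height, which leaves the three simple-root multiplicities determined: set $h=x_9$, $g=x_8$, $f=x_6$, $i=x_7$, $d=x_4$, $e=x_5$, so that the coordinate equations force
\[
x_1=m-2h-g-f-d,\qquad x_2=n-2h-2g-f-2i-d-e,\qquad x_3=k-h-g-f-i-e.
\]
A tuple $(h,g,f,i,d,e)$ of nonnegative integers then gives an actual representation of $\mu$ exactly when these three leftovers are nonnegative, and the bulk of the work is to verify that this nonnegativity requirement, added to $h,g,f,i,d,e\ge0$, unwinds exactly into the nested bounds of the theorem: requiring the partial weight in each of $\alpha_1,\alpha_2,\alpha_3$ to stay nonnegative after removing $h$ copies of $2\alpha_1+2\alpha_2+\alpha_3$ gives $h\le\hat h$ (the floors $\lfloor m/2\rfloor,\lfloor n/2\rfloor$ arising from the coefficient $2$ on $\alpha_1$ and $\alpha_2$), then removing $g$ copies of $\alpha_1+2\alpha_2+\alpha_3$ gives $g\le\hat g$, then $f$ copies of $\alpha_1+\alpha_2+\alpha_3$ gives $f\le\hat f$, then $i$ copies of $2\alpha_2+\alpha_3$ gives $i\le\hat i$, then $d$ copies of $\alpha_1+\alpha_2$ gives $d\le\hat d$, and finally $e$ copies of $\alpha_2+\alpha_3$ gives $e\le\hat e$, after which $x_1,x_2,x_3$ are automatically nonnegative. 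Because both the map $(x_1,\dots,x_9)\mapsto(h,g,f,i,d,e)$ and its inverse are simply relabelings of coordinates, this is a bijection between solutions of the system and tuples lying in the stated ranges.

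To finish, I would substitute the three expressions for $x_1,x_2,x_3$ into $x_1+\cdots+x_9$ and collect like terms, getting
\[
x_1+\cdots+x_9=(m+n+k)-\bigl(d+e+2f+3g+4h+2i\bigr),
\]
i.e. $\mathrm{ht}(\mu)$ minus $\sum_{\beta\in\Phi^+}(\mathrm{ht}(\beta)-1)\cdot(\text{multiplicity of }\beta)$. Summing $q^{x_1+\cdots+x_9}$ over all tuples in the ranges therefore produces exactly $q^{m+n+k}\sum_{h,g,f,i,d,e}(1/q)^{d+e+2f+3g+4h+2i}$, which is the asserted formula.

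I expect the genuinely delicate point to be the middle step: showing that the six nested minima $\hat h,\hat g,\hat f,\hat i,\hat d,\hat e$ are at once (i) the precise feasibility bound for each new variable and (ii) always $\ge0$ once the earlier variables are in range, so that no partition is over- or under-counted. This reduces to a short chain of implications exploiting that each $\hat\bullet$ is a minimum of the relevant partial weights, but it requires care about exactly where the floor functions belong --- they occur only for the three roots $2\alpha_2+\alpha_3$, $\alpha_1+2\alpha_2+\alpha_3$, $2\alpha_1+2\alpha_2+\alpha_3$, each of which carries a coefficient $2$ on some simple root. The remaining steps are bookkeeping. Alternatively one could set up a recursion that removes one positive root at a time, or lift the $q=1$ count of Shahi--Refaghat--Marefat to keep track of the number of parts; the direct bijection above seems cleanest.
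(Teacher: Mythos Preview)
Your approach is correct and is essentially the same as the paper's: parametrize partitions of $\mu$ by the multiplicities $(h,g,f,i,d,e)$ of the six non-simple positive roots, observe that the three simple-root multiplicities are then forced, and compute the total number of parts as $m+n+k-(d+e+2f+3g+4h+2i)$. The paper's proof is in fact briefer than yours---it writes down the part count and leaves the verification that the nested bounds $\hat h,\ldots,\hat e$ exactly cut out the feasible region implicit, whereas you correctly identify that bijectivity check as the one substantive step.
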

With the above result at hand, in Section \ref{sec:multiplicity} we define and describe the \emph{forbidden} sets of Weyl group elements as the sets consisting of the elements $\sigma \in W$ for which the corresponding term in the multiplicity formula is guaranteed to be zero. This result allows us to 
establish Theorem \ref{thm:alternationSetsFinal}, in which we present 46 distinct Weyl alternation sets $\A(\lambda,\mu)$ for the Lie algebra $\ccc$ when $\lambda$ and $\mu$ lie in the dominant chamber of the fundamental weight lattice.

Finally, in Section~\ref{sec:closed-formula-q-analog-KWMF} we give a closed formula for the value of the $q$-analog of Kostant's weight multiplicity formula,
as presented in Theorem~\ref{thm:finalThm}. The proof of this result utilizes the Weyl alternation sets of $\mathfrak{sp}_6(\mathbb{C})$ from Section~\ref{sec: alt sets} and Theorem~\ref{thm:q-konstants-equation}, which gives the $q$-analog of Kostant's partition function.
We end by remarking that in light of the heavy computational requirements in our main result, much of the article relies on code that we include in the accompanying appendices. For the use of the reader, the code can be downloaded at \cite{CODE}.
The article concludes in Section \ref{sec:future} where we present a direction for future work.

\section{Background} \label{sec:background}
In this section, we provide some background on known results related to Kostant's partition function. To begin we let $\g$ be a simple Lie algebra, $\Phi$ be the set root system, $\Phi^+ \subset \Phi$ be its set of positive roots, and $\Delta$ be the set of simple roots. If $\mu$ is a weight of $\g$, then \emph{Kostant's partition function (KPF)}, denoted $\wp(\mu)$, evaluates to the number of ways we can write $\mu$ as a nonnegative integral linear combination of the elements of $\Phi^+$.
For a weight $\mu$, the $q$-analog of Kostant's partition function is the polynomial-valued formula \begin{equation}
\wp_q(\mu)=c_0+c_1q+\cdots+c_kq^k    \label{eq:polykpf}
\end{equation}
where $c_i$ denotes the number of ways to write $\mu$ as a nonnegative integral sum of exactly $i$ positive roots in $\Phi^+$. In this way, $\wp_q(\mu)$, when evaluated at $q=1$, recovers the value of the partition function.

We note that the highest exponent appearing in the polynomial \eqref{eq:polykpf} depends solely on the input weight $\mu$. That is, if $\mu=\sum_{i=1}^{\ell}n_i\alpha_i$ with $n_1,\ldots n_\ell\in\mathbb{N}:=\{0,1,2,3,\ldots\}$, then we can write 
\[\wp_q(\mu)=\sum_{j=0}^\infty c_jq^j\]
where $c_j=0$ whenever $j> \sum_{i=1}^\ell n_i$. This is because we would be unable to express $\mu$ as a sum of more positive roots that the sum of the coefficients of the simple roots. 

While there is no general formula for either Kostant's partition function nor its $q$-analog, there has been some success in finding closed formulas for some low rank simple Lie algebras. Table \ref{table:known-results} compiles some of the known formulas along with their original references. 

\begin{center}
\begin{longtable}[c]{| p{0.4in} | p{4.4in} | p{.75in} |}
\caption{Table of Known Results} \label{table:known-results}\\
\hline
Type & Results & References\\ \hline
\hline
\endfirsthead
\hline
Type & Results & References\\ \hline
\hline
\endhead
 
\hline
\endfoot
 
\multirow{2}{0.4in}{$A_2$} & \centering$\wp(m\alpha_1 + n\alpha_2) =\min(m,n)+1$ & \multirow{2}{0.75in}{\cites{HL, Tarski}} \\ 
\cline{2-2}

 & \centering$\wp_q(m\alpha_1 + n\alpha_2) = \sum_{j=\max(m,n)}^{m+n} q^j$ & \\
\hline

\multirow{15}{0.4in}{$A_3$} & If $n \leq m,k$: & \multirow{15}{0.75in}{\cites{SRM, HRSS, Tarski}}\\
 & \centering$\wp(m\alpha_1 + n\alpha_2 + k\alpha_3) = \frac{1}{6}(n+1)(n+2)(n+3)$ & \\
 & If $n \geq m,k$ and $k \leq n-m$: & \\
 & \centering$\wp(m\alpha_1 + n\alpha_2 + k\alpha_3) = \frac{1}{6}(m+1)(m+2)(3k-m+3)$ & \\
 & If $n \geq m,k$ and $k>n-m$: & \\
 & \centering$\wp(m\alpha_1 + n\alpha_2 + k\alpha_3) = \frac{1}{6}(m+1)(m+2)(3k-m+3)-\frac{1}{6}A(A+1)(A+2)$ & \\
 & \centering for $A=k+m-n$ & \\
 & If $m \leq n \leq k$: & \\
 & \centering$\wp(m\alpha_1 + n\alpha_2 + k\alpha_3) = \frac{1}{6}(m+1)(m+2)(3n-2m+3)$ & \\
\cline{2-2}

 & \centering $\wp_q(m\alpha_1 + n\alpha_2 + k\alpha_3) = \sum_{f=0}^{\hat{f}} \sum_{d=0}^{\hat{d}} \sum_{e=0}^{\hat{e}} q^z$ & \\
 & where & \\
 & $\hat{f}=\min(m,n,k)$, & \\
 & $\hat{d}=\min(m-f,n-f)$, & \\
 & $\hat{e}=\min(n-f-d,k-f)$, & \\
 & $z=m+n+k-d-e-2f$. & \\
\hline

\multirow{10}{0.4in}{$B_2$} & If $m \leq n$: & \multirow{10}{0.75in}{\cite{Tarski}} \\
 & \centering $\wp(m\alpha_1 + n\alpha_2) =b(m)$ & \\
 & If $n \leq m \leq 2n$: & \\
 & \centering $\wp(m\alpha_1 + n\alpha_2) =b(m) - q_2(m - n -1) = q_2(n) - b(2n - m - 1)$ & \\
 & If $2n \leq m$: & \\
 & \centering $\wp(m\alpha_1 + n\alpha_2) =q_2(n)$ & \\
 & where & \\
 & $q_2(r) = \frac{1}{2}(r+1)(r+2)$, & \\
 & $b(r) = \frac{1}{4}(r+2)^2$ for $r$ even, and & \\
 & \hspace{0.3in}$= \frac{1}{4}(r+1)(r+3)$ for $r$ odd. & \\
\hline

\multirow{7}{0.4in}{$B_r$ $r\geq3$} & For the $q$-analog of KPF:  & \multirow{7}{0.75in}{\cite{HRSS}} \\
 & If $1 \leq i<j \leq r$: & \\
 & \centering $\wp_q(\al{i}+\al{i+1}+\dots+\al{j}) = q(1+q)^{j-i}$ & \\
 & If $l \geq 3$ and $x, y \in \mathbb{Z}_{>0}$: & \\ 
 & \centering $\wp_q(x\al{1} + y\al{2} + \al{3} + \dots + \al{l}) = q(1+q)^{l-3} [\left<x-1, y-1\right>$ & \\
 & \centering $+ \left<x, y-1\right> + \left<x, y\right>]$ & \\
  & where $\left< m, n\right> := \wp_q(m\al{1} + \al{2})$ for $m, n \in \Z_{\geq0}$ & \\
\hline

\multirow{2}{0.4in}{$C_2$} & If  $n \geq m$: & \multirow{2}{0.75in}{\cites{CGHLMRK, HL}}\\
 & \centering $\wp(m\alpha_1 + n\alpha_2) = \left(\floor{\frac{m}{2}}+1\right)\left(m-\floor{\frac{m}{2}}+1\right)$ & \\
\multirow{7}{0.4in}{$C_2$} & If  $2n-1>m>n$: & \multirow{7}{0.75in}{\cites{CGHLMRK, HL}}\\
  & \centering $\wp(m\alpha_1 + n\alpha_2) = \frac{2mn-m^2-n^2+m+n}{2}+\floor{\frac{m}{2}}\left(m-\floor{\frac{m}{2}}\right)+1$ & \\
  & If  $2n>m\geq2n-1>n$: & \\
  & \centering $\wp(m\alpha_1 + n\alpha_2) = \left(\floor{\frac{m}{2}}+1\right)\left(n-\frac{1}{2}\floor{\frac{m}{2}}+1\right)$ &  \\
 & If  $m \geq 2n$: & \\
 & \centering $\wp(m\alpha_1 + n\alpha_2) = \frac{(n+1)(n+2)}{2}$ & \\
\cline{2-2}

 & \centering$\wp_q(m\alpha_1 + n\alpha_2) = \sum^{\min(\floor{\frac{m}{2}},n)}_{i=0} \sum^{m+n-2i}_{j=\max(m-i,n)} q^j$ & \\
\hline

\multirow{7}{0.4in}{$C_3$} & \centering$\wp(m\alpha_1 + n\alpha_2 + k\alpha_3) = \sum_{a=0}^{\hat{a}} \sum_{b=0}^{\hat{b}} \sum_{c=0}^{\hat{c}} \wp_{\mathfrak{sl}}(\hat{\gG})$ & \multirow{7}{0.75in}{\cite{SRM}} \\
 & where & \\
 & $\hat{a} = \min(m, \floor{\frac{1}{2}n}, k)$, & \\
 & $\hat{b} = \min(\floor{\frac{1}{2}(m-a)}, \floor{\frac{1}{2}n}-a, k-a)$, & \\
 & $\hat{c} = \min(\floor{\frac{1}{2}n}-a-b, k-a-b)$, & \\
 & $\hat{\gG} = (m-a-2b)\al{1} + (n-2a-2b-2c)\al{2}+ (k-a-b-c)\al{3}$, & \\
 & and $\wp_{\mathfrak{sl}}$ refers to KPF with respect to $\mathfrak{sl}_4(\mathbb{C})$. & \\
\hline

\multirow{24}{0.4in}{$\g_2$} & If $m \leq n$: & \multirow{24}{0.75in}{\cites{CGHLMRK, Tarski}} \\
 & \centering$\wp(m\alpha_1 + n\alpha_2) = g(m)$ & \\
 & If $n \leq m \leq \frac{3}{2}n$: & \\
 & \centering $\wp(m\alpha_1 + n\alpha_2) = g(m) - h(m-n-1)$ & \\
 & If $\frac{3}{2}n \leq m \leq 2n$: & \\
 & \centering $\wp(m\alpha_1 + n\alpha_2) = h(n) - g(3n-m-1) + h(2n-m-2)$ & \\
 & If $2n \leq m \leq 3n$: & \\
 & \centering $\wp(m\alpha_1 + n\alpha_2) = h(n) - g(3n - m - 1)$ & \\
 & If $3n \leq m$: & \\
 & \centering $\wp(m\alpha_1 + n\alpha_2) = h(n)$ & \\
 & where & \\
 & $h(r) = \frac{1}{48}(r+2)(r+4)(r^2 + 6r + 6)$ for $r$ even, & \\
 & \hspace{0.3in}$= \frac{1}{48}(r+1)(r+3)^2(r+5)$ for $r$ odd, & \\
 & & \\
 & $g(r) = \frac{1}{432}(r+6)(r^3 + 14r^2 + 54r + 72)$ for $r \equiv 0 \mod 6$, & \\
 & \hspace{0.3in}$= \frac{1}{432}(r+5)^2(r^2 + 10r + 13)$ for $r \equiv 1 \mod 6$, & \\
 & \hspace{0.3in}$= \frac{1}{432}(r+4)(r^3 + 16r^2 + 74r + 68)$ for $r \equiv 2 \mod 6$, & \\
 & \hspace{0.3in}$= \frac{1}{432}(r+3)^2(r+5)(r+9)$ for $r \equiv 3 \mod 6$, & \\
 & \hspace{0.3in}$= \frac{1}{432}(r+2)(r+8)(r^2+10r+22)$ for $r \equiv 4 \mod 6$, and & \\
 & \hspace{0.3in}$= \frac{1}{432}(r+1)(r+5)(r+7)^2$ for $r \equiv 5 \mod 6$, & \\
\cline{2-2}

 & \centering$\wp_q(m\alpha_1 + n\alpha_2) = \sum_{i=0}^{\hat{i}} \sum_{j=0}^{\hat{j}} \sum_{k=0}^{\hat{k}} \sum_{l=0}^{\hat{l}} q^{z}$ &  \\
 & where $\hat{i} = \min(\floor{\frac{m}{3}}, \floor{\frac{n}{2}})$, $\hat{j} = \min(\floor{\frac{m-3i}{3}}, n-2i)$,& \\
 & $\hat{k} = \min(\floor{\frac{m-3i-3j}{2}}, n-2i-j)$, $\hat{l} = \min(m-3i-3j-2k, n-2i-j-k),$& \\
 & and $z = m+n-4i-3j-2k-l$. & \\
\end{longtable}
\end{center}

To make our approach precise, we now give the needed background on $\ccc$. 

\subsection{Background on \texorpdfstring{$\ccc$}{sp\_6(C)}}
We begin by more generally giving the background for $\mathfrak{sp}_{2r}(\mathbb{C})$, which we then specialize to $r=3$. Let $\e_1,\e_2,\ldots,\e_r$ denote the standard basis elements of $\mathbb{R}^r$ and let $\alpha_i = \e_i - \e_{i+1}$ for $1 \leq i \leq r-1$ and $\alpha_r = 2\e_r$.
For $r \geq 3$, the set of simple roots of $\mathfrak{sp}_{2r}(\mathbb{C})$ is $\Delta = \{\alpha_i : 1 \leq i \leq r\}$, and the set of positive roots is \[\Phi^+ = \{\e_i - \e_j, \e_i+\e_j : 1 \leq i < j\leq r\} \cup  \{2\e_i : 1 \leq i \leq r\}.\] 
The fundamental weights of $\mathfrak{sp}_{2r}(\mathbb{C})$ are given by \cite{GW}*{p. 146}:
\[\w_i = \al{1} + 2\al{2} + \dots + (i-1)\alpha_{i-1} + i \left(\al{i} + \al{i+1} + \dots + \al{r-1} + \frac{1}{2} \al{r}\right)\] whenever $1 \leq i \leq r$.

In particular, $\ccc$ has simple roots $\Delta = \{\alpha_1, \alpha_2, \alpha_3\}$, positive roots \[\Phi^+ = \{\alpha_1,
\alpha_2, 
\alpha_3, 
\al{1} + \al{2},
\al{2} + \al{3}, 
\al{1} + \al{2} + \al{3}, 
\al{1} + 2\al{2} + \al{3}, 
2\al{1} + 2\al{2} + \al{3}, 
2\al{2} + \al{3}\},\] 
and fundamental weights
\begin{align*}
    \w_1 &= \al{1} + \al{2} + \frac{1}{2} \al{3}, \\
    \w_2 &= \al{1} + 2\al{2} + \al{3}, \text{ and}\\
    \w_3 &= \al{1} + 2\al{2} + \frac{3}{2} \al{3}.
\end{align*}
Note that $\rho=3\al{1}+5\al{2}+3\al{3}=\w_1+\w_2+\w_3$.
Now recall that the Weyl group of $\ccc$ is a group generated by $s_1,s_2,s_3$, the reflections orthogonal to the simple roots $\alpha_1,\alpha_2,\alpha_3$, respectively. These generators 
act linearly on the simple roots in the following way:

\begin{multicols}{3}
\noindent
\begin{align*}
s_1 :\,  & \al{1} \mapsto -\al{1} \\
& \al{2} \mapsto \al{1}+\al{2} \\
& \al{3} \mapsto \al{3},
\end{align*}
\begin{align*}
s_2 :\,  & \al{1} \mapsto \al{1}+\al{2} \\
& \al{2} \mapsto -\al{2} \\
& \al{3} \mapsto 2\al{2}+\al{3},
\end{align*}
\begin{align*}
s_3 :\,  & \al{1} \mapsto \al{1} \\
& \al{2} \mapsto \al{3}+\al{2} \\
& \al{3} \mapsto -\al{3}.
\end{align*}
\end{multicols}
In addition, for $1\leq i,j\leq 3$ we have that 
\begin{align}\label{eq:simples acting on fun}
s_i(\w_j)&=\begin{cases}\w_j&\mbox{if $i\neq j$}\\
\w_j-\al{j}&\mbox{if $i=j$}
.\end{cases}
\end{align}

\noindent The full set of elements in $W$, as presented in \cite{GW}, is:
\begin{align*}
W = \{&1, s_1, s_2, s_3,
s_1  s_2, s_2  s_1, s_2  s_3, s_3  s_1, s_3  s_2,
s_1  s_2  s_1, s_1  s_2  s_3, s_2  s_3  s_1, s_2  s_3  s_2, s_3  s_2  s_1,\\
&s_3  s_1  s_2, s_3  s_2  s_3,
s_1  s_2  s_3  s_1, s_1  s_2  s_3  s_2,
s_2  s_3  s_2  s_1, s_2  s_3  s_1  s_2,
s_3  s_1  s_2  s_1, s_3  s_2  s_3  s_1, s_3  s_2  s_3  s_2, \\ 
&s_3  s_1  s_2  s_3, 
s_1  s_2  s_3  s_2  s_1, s_1  s_2  s_3  s_1  s_2, 
s_2  s_3  s_1  s_2  s_1, s_2  s_3  s_1  s_2  s_3,
s_3  s_1  s_2  s_3  s_1, s_3  s_1  s_2  s_3  s_2,\\
&s_3  s_2  s_3  s_2  s_1, s_3  s_2  s_3  s_1  s_2,
s_1  s_2  s_3  s_1  s_2  s_1,
s_2  s_3  s_1  s_2  s_3  s_1, s_2  s_3  s_1  s_2  s_3  s_2, 
s_3  s_1  s_2  s_3  s_1  s_2,\\ 
&s_3  s_1  s_2  s_3  s_2  s_1, s_3  s_2  s_3  s_1  s_2  s_1, s_3  s_2  s_3  s_1  s_2  s_3,
s_2  s_3  s_1  s_2  s_3  s_2  s_1, s_2  s_3  s_1  s_2  s_3  s_1  s_2, \\
&s_3  s_2  s_3  s_1  s_2  s_3  s_2, s_3  s_1  s_2  s_3  s_1  s_2  s_1, s_3  s_2  s_3  s_1  s_2  s_3  s_1,
s_2  s_3  s_1  s_2  s_3  s_1  s_2  s_1,\\ 
&s_3  s_2  s_3  s_1  s_2  s_3  s_2  s_1,
s_3  s_2  s_3  s_1  s_2  s_3  s_1  s_2, 
s_3  s_2  s_3  s_1  s_2  s_3  s_1  s_2  s_1\}.
\end{align*}
For $\sigma\in W$, $\ell(\sigma)$ refers to the length of $\sigma$ when it is written in its minimal form, i.e., in a shortest expression as a product of the generators. For example, if $\sigma=s_2  s_3  s_1  s_2  s_3  s_1  s_2  s_1$, then $\sigma$ could have multiple expressions, but among them the one given has minimal length, which is $\ell(\sigma)=8$.

Now recall that since the partition function evaluates to zero for most $\sigma \in W$, we define the Weyl alternation set for a pair of dominant integral weights $\lambda$ and $\mu$ as follows: \[\mathcal{A}(\lambda,\mu):\{\sigma\in W\; :\; \wp_q(\sigma(\lambda+\rho)-\rho-\mu)>0\}.\]

\noindent In other words, $\A(\lambda, \mu)$ contains all $\sigma \in W$ such that $\sigma(\lambda+\rho)-(\mu+\rho)$ can be written as a nonnegative integral combination of $\al{1},\al{2},$ and $\al{3}$.

\begin{rek}
In Table~\ref{tab:known-multiplicities} we  present some Lie algebras and provide the references which give the Weyl alternation sets $\A(\lambda,\mu)$ for varying pairs of weights $\lambda$ and $\mu$. We note that our contributions in this paper provide the Weyl alternation sets for $\ccc$, which were previously unknown.
\begin{table}[h!]
    \centering
    \caption{Table of Known Weyl Alternation Sets}
    \begin{tabular}{|c|c|}
        \hline
         Type & Reference \\
         \hline
         \hline
         $A_2$ &  \cite{HLM}*{p. 2}\\
         \hline
         $A_3$ &  \cite{GHLMMRKT}*{p. 37}\\
         \hline
         $B_2$ &  \cite{LRRRHTU}*{p. 8}\\
         \hline
         $C_2$ &  \cite{LRRRHTU}*{p. 10}\\
         \hline
         $C_3$ &  This paper, Theorem \ref{thm:alternationSetsFinal} \\
         \hline
         $D_2$ &  \cite{LRRRHTU}*{p. 11} \\ 
         \hline
         $\mathfrak{g}_2$ &  \cite{LRRRHTU}*{p. 12-13} \\
         \hline
    \end{tabular}
    \label{tab:known-multiplicities}
\end{table}
\end{rek}

Observe that the Weyl alternation set allows us to reduce the weight multiplicity computation as follows:
\[m_q(\lambda,\mu)=\sum_{\sigma\in W}(-1)^{\ell(\sigma)}\wp_q(\sigma(\lambda+\rho)-\rho-\mu)=\sum_{\sigma\in \mathcal{A}(\lambda,\mu)}(-1)^{\ell(\sigma)}\wp_q(\sigma(\lambda+\rho)-\rho-\mu).\]

\noindent We proceed by letting $\lambda=m\varpi_1+n\varpi_2+k\varpi_3$ and $\mu=x\varpi_1+y\varpi_2+z\varpi_3$ with $m,n,k,x,y,z\in\mathbb{N}$.
Using the change of basis from the fundamental weights to the simple roots we note that 
\begin{align*}
    \lambda &= m\w_1 + n\w_2 + k\w_3 
    = (m+n+k) \al{1} + (m+2n+2k) \al{2} + \left(\frac{m}{2}+n+\frac{3}{2}k\right) \al{3}.
\end{align*}
Similarly, 
\begin{align*}
\mu &= x\w_1 + y\w_2 + z\w_3 = (x+y+z) \al{1} + (x+2y+2z) \al{2} + \left(\frac{x}{2}+y+\frac{3}{2}z\right) \al{3}.
\end{align*}

We now proceed by giving a closed formula for the value of the $q$-analog of Kostant's partition function. This result will be used within the multiplicity formula we derive.

\section{The \texorpdfstring{$q$}{q}-analog of Kostant's partition function for \texorpdfstring{$\ccc$}{sp\_6(C)}}\label{sec:KPF}

In this section, we follow a similar argument to that of \cite{HRSS}*{Proposition 20} to prove Theorem \ref{thm:q-konstants-equation}.
Afterwards, we discuss why simpler closed formulas are not provided, and instead we point the reader to Appendix~\ref{sec:kpf-eval-code} (or to \cite{CODE}) for a computer implementation of this result. 

\begin{customthm}{1.1} 
Let  $\mu = m\alpha_1 + n\alpha_2 + k\alpha_3$, with $m,n,k\in\mathbb{N}$, be a weight of $\ccc$. Then, the $q$-analog of Kostant's partition function, $\wp_q(m\alpha_1 + n\alpha_2 + k\alpha_3)$, can be calculated using the following formula: 
\begin{equation*}
    q^{m+n+k}
    \sum_{h=0}^{\hat{h}}
    \sum_{g=0}^{\hat{g}}
    \sum_{f=0}^{\hat{f}}
    \sum_{i=0}^{\hat{i}}
    \sum_{d=0}^{\hat{d}}
    \sum_{e=0}^{\hat{e}} \left(\frac{1}{q}\right)^{d+e+2f+3g+4h+2i}
\end{equation*}
where 
\begin{align*}
    &\hat{d} = \min\{m-2h-g-f, n-2h-2g-f-2i\},\\
    &\hat{e} = \min\{n-2h-2g-f-2i-d, k-h-g-f-i\},\\
    &\hat{f} = \min\{m-2h-g, n-2h-2g, k-h-g\},\\
    &\hat{g} = \min\{m-2h, \floor{\frac{n-2h}{2}}, k-h\},\\
    &\hat{h} = \min\{\floor{\frac{m}{2}}, \floor{\frac{n}{2}}, k\}, \text{ and}\\
    &\hat{i} = \min\{\floor{\frac{n-2h-2g-f}{2}}, k-h-g-f\}.
\end{align*}
\end{customthm}

\begin{proof}
First, let a partition of the weight $m\alpha_1 + n\alpha_2 + k\alpha_3$ be given such that
\begin{itemize}
    \item $d$ count how many times the root $\bt{4} = \al{1} + \al{2}$ is used,
    \item $e$ count how many times the root $\bt{5} = \al{2} + \al{3}$ is used,
    \item $f$ count how many times the root $\bt{6} = \al{1} + \al{2} + \al{3}$ is used,
    \item $g$ count how many times the root $\bt{7} = \al{1} + 2\al{2} + \al{3}$ is used,
    \item $h$ count how many times the root $\bt{8} = 2\al{1} + 2\al{2} + \al{3}$ is used, and
    \item $i$ count how many times the root $\bt{9} = 2\al{2} + \al{3}$ is used.
\end{itemize}
In other words, let us consider the partition $m\alpha_1 + n\alpha_2 + k\alpha_3 = a\alpha_1 + b\alpha_2 + c\alpha_3 + d\beta_4 + e\beta_5 + f\beta_6 + g\beta_7 + h\beta_8 + i\beta_9$. Note, that $a$, $b$, and $c$ are uniquely determined by our choice of $d$, $e$,$f$,$g$,$h$, and $i$. Namely, $a$ must equal $m-2h-g-f-d$, $b$ must equal $n-2h-2g-f-2i-d-e$, and $c$ must equal $k-h-g-f-i-e$. Now, it suffices to keep track of the total number of positive roots used in this partition, which is given by $h+g+f+i+d+e + (m-2h-g-f-d) + (n-2h-2g-f-2i-d-e) + (k-h-g-f-i-e)$, which we can simplify to $m+n+k-d-e-2f-3g-4h-2i$. 
\end{proof}

Given the appearance of functions involving minimums in the formula of Theorem \ref{thm:q-konstants-equation}, one would ideally consider inequalities involving the parameters $m$, $n$, and $k$ to find closed formulas for $\wp_q(m\alpha_1 + n\alpha_2 + k\alpha_3)$. However, such an  approach is rather intractable. For illustrative purposes we present one such case. 

\begin{example}
Consider the case where $m~\geq~n~\geq~2k$. For the first five summations appearing in Theorem \ref{thm:q-konstants-equation}, after some algebraic manipulations we can determine the expression to which each minimum evaluates. From which, we can then start computing noting that the first sum, whose lower bound is $h=0$ and upper bound is ${\min\{\floor{\frac{m}{2}}, \floor{\frac{n}{2}}, k\}}$. Very quickly, by our assumption, we know that $\min\{\floor{\frac{m}{2}}, \floor{\frac{n}{2}}, k\} = k$, so $0~\leq~h~\leq~k$. 
We can then move onto our second sum, whose lower bound is $g=0$ and upper bound is $\min\{m-2h, \floor{\frac{n-2h}{2}}, k-h\}$. Note that since $2k \geq n$ then $\floor{\frac{n-2h}{2}} \geq k-h$, and since $2k \leq m$ and $k \geq h$ then $m-2h \geq k-h$. So, we know $0 \leq g \leq k-h$. Now, we can look at the third sum with lower bound $f=0$ and upper bound ${\min\{m-2h-g, n-2h-2g, k-h-g\}}$. From our logic in the last summation, we know that $m-2h \geq k-h$, which implies $m-2h-g \geq k-h-g$. And, also from out last summation, we know that $g \leq k-h$ which we can rearrange into $g+h \leq k$. Since $2k \leq n$ (from our assumptions). This means that we know $n-2h-2g \geq k-h-g$. This yields that $0 \leq f \leq k-h-g$.
Our fourth sum's lower bound is $i=0$, and its upper bound is $\min\{\floor{\frac{n-2h-2g-f}{2}}, k-h-g-f\}$. Since $2k \leq n$ and $f \geq \floor{\frac{f}{2}}$, we can quickly see that $\floor{\frac{n-2h-2g-f}{2}} \geq k-h-g-f$, so we know $0 \leq i \leq k-h-g-f$. 

Now, we move onto our fifth sum, whose lower bound is $d=0$ and upper bound is $\min\{m-2h-g-f, n-2h-2g-f-2i\}$. Here, we have to determine an inequality between the values  $m-2h-g-f$  and  $n-2h-2g-f-2i$. This is equivalent to determining an inequality between  $m$  and  $n-g-2i$. Note that, at its highest, the expression on the right hand side, namely $n-g-2i$, is equal to $n$ and that by our assumption $m \geq n$. So, we have that $\min\{m-2h-g-f, n-2h-2g-f-2i\} = n-2h-2g-f-2i$, that is, $0 \leq d \leq n-2h-2g-f-2i$. 

However, once we get to the sixth summation, we are presented with the following problem. We want to determine an inequality  between $n-2h-2g-f-2i-d$ and $k-h-g-f-i$. This is equivalent to finding an inequality between $n-h-g-i-d$  and  $k$. At its highest, the expression on the left hand side, namely $n-h-g-i-d$, is equal to $n$, implying that $n-2h-2g-f-2i-d \geq k-h-g-f-i$. 

However, suppose $d$ is equal to its highest possible value, $n-2h-2g-f-2i$. Then, $n-h-g-i-d = n-h-g-i-d - n+2h+2g+f+2i$, which simplifies to $h+g+f+i$. We know from the fourth sum that $0 \leq i \leq k-h-g-f$, which we can rearrange into $h+g+f \leq h+g+i+f \leq k$ which depending on the values of $h,g$ and $f$ tells us that $h+g+i+f$ can be anywhere from $0$ to $k$. This means that, when $h+g+i+f = 0$ and $d = n-2h-2g-f-2i$, we have $n-2h-2g-f-2i-d \leq k-h-g-f-i$, which contradicts what we had previously found. Note also that this is not dependent on the value of $n$ or $k$.

Even after all of this work, we have reached a point in which we must consider additional subcases (i.e., inequalities between the parameters) in order to actually be able to provide a formula for the value of the $q$-analog of Kostant's partition function.
\end{example}

As illustrated above, although such case-by-case analysis could ensure that closed formulas may be found, the case work needed is overwhelming and can be more aptly found through a computer implementation. 
In light of this, as previously mentioned, in  Appendix~\ref{sec:kpf-eval-code} we provide Wolfram Mathematica Code which evaluates the formula in Theorem \ref{thm:q-konstants-equation} to give exact outputs for the value of the $q$-analog of Kostant's partition function for any input weight of the form $\mu = m\alpha_1 + n\alpha_2 + k\alpha_3$ of $\ccc$.

\section{Forbidden Weyl alternation sets} \label{sec:multiplicity}

Recall from Section~\ref{sec:background} that  Kostant's weight multiplicity formula is difficult to compute due in part to the fact that order of the Weyl group grows  exponential-factorially with respect to the rank of the Lie algebra. 
Moreover, for all $\sigma\in W$, since $\wp(\sigma(\lambda+\rho)-(\mu+\rho)) = 0$ if and only if $\wp_q(\sigma(\lambda+\rho)-(\mu+\rho)) = 0$, we know that the support of the  multiplicity formula is the same as that of its $q$-analog. So, in what follows, we restrict our study of Weyl alternation sets to using Kostant's partition function.
In this section, we aim to give some characterizations of the Weyl alternation sets for the Lie algebra $\ccc$ when we consider $\lambda$ and $\mu$ in the  dominant integral weight lattice. 
We begin by remarking that there exist a possible $2^{48}$ subsets of the Weyl group, since $|W|=48$. However, we find that many of these subsets never arise as Weyl alternation sets for any pair of dominant integral weights $\lambda$ and $\mu$. We call these subsets \textit{forbidden Weyl alternation sets.} We establish that Weyl alternation sets arise from a set of logical statements, and we show that forbidden Weyl alternation sets contain some logical contradictions among their defining logical statements. When it is clear from context, we often omit the word ``Weyl" from ``Weyl alternation sets'' and from ``forbidden Weyl alternation sets.'' Moreover, we remark that these sets are called ``forbidden Weyl alternation sets'' not because they \emph{are} alternation sets, but because they are sets which are \emph{logically forbidden from being} alternation sets.

To find forbidden alternation sets, we begin by considering 1-element subsets of the Weyl group and identify contradictions arising in their defining logical statements when those sets are thought of as subsets of Weyl alternation sets. 
Following this, we then proceed using the same technique by considering 2-element subsets of the Weyl group, and we continue to increment their size as needed until we are able to account for all possible subsets of the Weyl group as either being a forbidden alternation set or in fact an alternation set we can construct (i.e., one for which we can find a $\lambda$ and $\mu$ such that $\A(\lambda,\mu)$ is that set). This approach works because if an $n$-element subset gives a contradiction, any set of Weyl group elements containing that $n-$element subset is a forbidden alternation set. We make this precise next.

\begin{rek}\label{rem:subset}
Given a subset $X$ of the Weyl group, we let $X^c$ be its complement. If $X$ were an alternation set, this would imply that every element $x\in X$ yields $x(\lambda+\rho)-\rho-\mu$ which can be written as a nonnegative integer linear combination of the simple roots, while simultaneously this not being the case for all $y\in X^c$. 
Equivalently, this yields a set of logical statements requiring that for every $x\in X$ the coefficients of the simple roots in the expression $x(\lambda+\rho)-\rho-\mu$ are in $\mathbb{N}$, while for every $y\in X^c$ there exists a coefficient of the simple roots in the expression $y(\lambda+\rho)-\rho-\mu$ which is \textbf{not} in $\mathbb{N}$. 
If in this set of logical statements we can find a subset in which a contradiction arises, then this shows that the set $X$ cannot be an alternation set. In fact, whenever we can find any sublist of logical  statements which give contradictions, they will remain contradictions in any larger list of logical statements containing them. This then guarantees that if we can find ``small'' lists of logical statements in which contradictions arise, then we will be able to eliminate many subsets of the Weyl group from being alternation sets.

In what follows we begin by showing that certain elements can never actually be in any alternation set. Later, we consider pairs of elements being in the alternation set together with a third element not being in the set. Lastly, we consider cases where larger subsets of Weyl group elements are in the alternation set. Together these three types of contradictions will allow us to fully characterize the alternation sets of $\ccc$.
\end{rek}

We begin this analysis by discarding forbidden alternation sets that imply contradictions of three types, which we describe shortly. As it turns out, doing so allows us to fully characterize the Weyl alternation sets of $\ccc$, as we show that the remaining 46 subsets of the Weyl group can be constructed as alternation sets (Theorem~\ref{thm:alternationSetsFinal}).

Recall that in order to describe the set $\mathcal{A}(\lambda,\mu)$ for each pair of dominant integral weights $(\lambda,\mu)$, we need to determine for which $\sigma \in W$ the expression $\sigma(\lambda+\rho)-(\mu+\rho)$ can be written as a nonnegative integral linear combination of the simple roots $\al{1},\al{2},$ and $\al{3}$.
To do so, we consider each $\sigma\in W$ and compute the expression
\begin{equation}
    \sigma(\lambda+\rho)-\rho-\mu=C_{1,\sigma}\alpha_1+C_{2,\sigma}\alpha_2+C_{3,\sigma}\alpha_3,
\end{equation}
where $C_{1,\sigma}, C_{2,\sigma},C_{3,\sigma}$ are the respective coefficients of the simple roots $\alpha_1,\alpha_2,\alpha_3$ when we fully simplify the given expression.
Notice that $\wp(\sigma(\lambda+\rho)-\rho-\mu)>0$ if and only if the corresponding expression $C_{1,\sigma}\alpha_1+C_{2,\sigma}\alpha_2+C_{3,\sigma}\alpha_3$ satisfies the following two conditions:
\begin{enumerate}
    \item $C_{i,\sigma}$ is an integer for all $1\leq i\leq 3$. We call this the \textit{integrality condition}.
    \item $C_{i,\sigma}$ is nonnegative for all $1\leq i\leq 3$. We call this the \textit{nonnegativity condition}.
\end{enumerate}
For fixed $\lambda$ and $\mu$, we want to find for which $\sigma \in W$ these conditions are satisfied.

Recall, from Section~\ref{sec:background}, that we can write $\lambda=m\w_1+n\w_2+k\w_3$ and $\mu=x\w_1+y\w_2+z\w_3$ (with $m,n,k,x,y,z\in\mathbb{N}$) in terms of the simple roots $\alpha_1$, $\alpha_2$, and $\alpha_3$ as follows: \[\lambda = (m+n+k) \al{1} + (m+2n+2k) \al{2} + \left(\frac{m}{2}+n+\frac{3}{2}k\right) \al{3},\] and similarly, \[\mu = (x+y+z) \al{1} + (x+2y+2z) \al{2} + \left(\frac{x}{2}+y+\frac{3}{2}z\right) \al{3}.\]
We also know that $\rho = \frac{1}{2} \sum_{\ga \in \Phi^{+}} \ga$, which can also we expressed as $\rho=3\al{1} + 5\al{2} + 3\al{3}$. 
With these computations, we can then rewrite 
\begin{align*}
    \sigma[\lambda+\rho]-\rho-\mu&=\sigma\left[(m+n+k+3)\al{1} + (m+2n+2k+5)\al{2} + \left(\frac{m}{2}+n+\frac{3}{2}k+3\right)\al{3}\right]\\
    &\qquad\qquad- (x+y+z+3) \al{1} - (x+2y+2z+5) \al{2} - \left(\frac{x}{2}+y+\frac{3}{2}z+3\right) \al{3}\\
    &= C_{1,\sigma}\al{1}+C_{2,\sigma}\al{2}+C_{3,\sigma}\al{3}.
\end{align*}
Next we consider each element of the Weyl group and its associated computation that determines the coefficients $C_{1,\sigma}, C_{2,\sigma}, C_{3,\sigma}$ in terms of the variables $m,n,k,x,y,z$. 
We compile these results in in Table~\ref{table:all-sigmas}.
The interested reader is encouraged to use the accompanying code found in Appendix~\ref{sec:coeff-code} to verify these results. 

{\footnotesize{
\begin{center}
\begin{longtable}[c]{| p{1.23in} | p{1.4in} | p{1.6in} | p{1.6in} | }
\caption{The expressions $C_{1,\sigma}\alpha_1+C_{2,\sigma}\alpha_2+C_{3,\sigma}\alpha_3$ for all $\sigma \in W$.} \label{table:all-sigmas}\\\hline

$\sigma$ & $C_{1, \sigma}$ & $C_{2, \sigma}$ & $C_{3, \sigma}$\\\hline
\hline
\endfirsthead
\hline
$\sigma$ & $C_{1, \sigma}$ & $C_{2, \sigma}$ & $C_{3, \sigma}$\\\hline
\hline
\endhead

\hline
\endfoot
$1$ & $m+n+k-x-y-z$ & $m+2n+2k-x-2y-2z$ & $\frac{m}{2}+n+\frac{3}{2}k-\frac{x}{2}-y-\frac{3}{2}z$\\\hline
$s_1$ & $n+k-x-y-z-1$ & $m+2n+2k-x-2y-2z$ & $\frac{m}{2}+n+\frac{3}{2}k-\frac{x}{2}-y-\frac{3}{2}z$ \\\hline
$s_2$ & $m+n+k-x-y-z$ & $m+n+2k-x-2y-2z-1$ & $\frac{m}{2}+n+\frac{3}{2}k-\frac{x}{2}-y-\frac{3}{2}z$ \\\hline
$s_3$ & $m+n+k-x-y-z$ & $m+2n+2k-x-2y-2z$ & $\frac{m}{2}+n+\frac{k}{2}-\frac{x}{2}-y-\frac{3}{2}z-1$ \\\hline
$s_1  s_2$ & $k-x-y-z-2$ & $m+n+2k-x-2y-2z-1$ & $\frac{m}{2}+n+\frac{3}{2}k-\frac{x}{2}-y-\frac{3}{2}z$ \\\hline
$s_2  s_1$ & $n+k-x-y-z-1$ & $n+2k-x-2y-2z-2$ & $\frac{m}{2}+n+\frac{3}{2}k-\frac{x}{2}-y-\frac{3}{2}z$ \\\hline
$s_2  s_3$ & $m+n+k-x-y-z$ & $m+n-x-2y-2z-3$ & $\frac{m}{2}+n+\frac{k}{2}-\frac{x}{2}-y-\frac{3}{2}z-1$ \\\hline
$s_3  s_1$ & $n+k-x-y-z-1$ & $m+2n+2k-x-2y-2z$ & $\frac{m}{2}+n+\frac{k}{2}-\frac{x}{2}-y-\frac{3}{2}z-1$ \\\hline
$s_3  s_2$ & $m+n+k-x-y-z$ & $m+n+2k-x-2y-2z-1$ & $\frac{m}{2}+\frac{k}{2}-\frac{x}{2}-y-\frac{3}{2}z-2$ \\\hline
$s_1  s_2  s_1$ & $k-x-y-z-2$ & $n+2k-x-2y-2z-2$ & $\frac{m}{2}+n+\frac{3}{2}k-\frac{x}{2}-y-\frac{3}{2}z$ \\\hline
$s_1  s_2  s_3$ & $-k-x-y-z-4$ & $m+n-x-2y-2z-3$ & $\frac{m}{2}+n+\frac{k}{2}-\frac{x}{2}-y-\frac{3}{2}z-1$ \\\hline
$s_2  s_3  s_1$ & $n+k-x-y-z-1$ & $n-x-2y-2z-4$ & $\frac{m}{2}+n+\frac{k}{2}-\frac{x}{2}-y-\frac{3}{2}z-1$ \\\hline
$s_2  s_3  s_2$ & $m+n+k-x-y-z$ & $m-x-2y-2z-4$ & $\frac{m}{2}+\frac{k}{2}-\frac{x}{2}-y-\frac{3}{2}z-2$ \\\hline
$s_3  s_2  s_1$ & $n+k-x-y-z-1$ & $n+2k-x-2y-2z-2$ & $-\frac{m}{2}+\frac{k}{2}-\frac{x}{2}-y-\frac{3}{2}z-3$ \\\hline
$s_3  s_1  s_2$ & $k-x-y-z-2$ & $m+n+2k-x-2y-2z-1$ & $\frac{m}{2}+\frac{k}{2}-\frac{x}{2}-y-\frac{3}{2}z-2$ \\\hline
$s_3  s_2  s_3$ & $m+n+k-x-y-z$ & $m+n-x-2y-2z-3$ & $\frac{m}{2}-\frac{k}{2}-\frac{x}{2}-y-\frac{3}{2}z-3$ \\\hline
$s_1  s_2  s_3  s_1$ & $-k-x-y-z-4$ & $n-x-2y-2z-4$ & $\frac{m}{2}+n+\frac{k}{2}-\frac{x}{2}-y-\frac{3}{2}z-1$ \\\hline
$s_1  s_2  s_3  s_2$ & $-n-k-x-y-z-5$ & $m-x-2y-2z-4$ & $\frac{m}{2}+\frac{k}{2}-\frac{x}{2}-y-\frac{3}{2}z-2$ \\\hline
$s_2  s_3  s_2  s_1$ & $n+k-x-y-z-1$ & $-m-x-2y-2z-6$ & $-\frac{m}{2}+\frac{k}{2}-\frac{x}{2}-y-\frac{3}{2}z-3$ \\\hline
$s_2  s_3  s_1  s_2$ & $k-x-y-z-2$ & $-n-x-2y-2z-6$ & $\frac{m}{2}+\frac{k}{2}-\frac{x}{2}-y-\frac{3}{2}z-2$ \\\hline
$s_3  s_1  s_2  s_1$ & $k-x-y-z-2$ & $n+2k-x-2y-2z-2$ & $-\frac{m}{2}+\frac{k}{2}-\frac{x}{2}-y-\frac{3}{2}z-3$ \\\hline
$s_3  s_2  s_3  s_1$ & $n+k-x-y-z-1$ & $n-x-2y-2z-4$ & $-\frac{m}{2}-\frac{k}{2}-\frac{x}{2}-y-\frac{3}{2}z-4$ \\\hline
$s_3  s_2  s_3  s_2$ & $m+n+k-x-y-z$ & $m-x-2y-2z-4$ & $\frac{m}{2}-\frac{k}{2}-\frac{x}{2}-y-\frac{3}{2}z-3$ \\\hline
$s_3  s_1  s_2  s_3$ & $-k-x-y-z-4$ & $m+n-x-2y-2z-3$ & $\frac{m}{2}-\frac{k}{2}-\frac{x}{2}-y-\frac{3}{2}z-3$ \\\hline
$s_1  s_2  s_3  s_2  s_1$ & $-m-n-k-x-y-z-6$ & $-m-x-2y-2z-6$ & $-\frac{m}{2}+\frac{k}{2}-\frac{x}{2}-y-\frac{3}{2}z-3$ \\\hline
$s_1  s_2  s_3  s_1  s_2$ & $-n-k-x-y-z-5$ & $-n-x-2y-2z-6$ & $\frac{m}{2}+\frac{k}{2}-\frac{x}{2}-y-\frac{3}{2}z-2$ \\\hline
$s_2  s_3  s_1  s_2  s_1$ & $k-x-y-z-2$ & $-m-n-x-2y-2z-7$ & $-\frac{m}{2}+\frac{k}{2}-\frac{x}{2}-y-\frac{3}{2}z-3$ \\\hline
$s_2  s_3  s_1  s_2  s_3$ & $-k-x-y-z-4$ & $-n-2k-x-2y-2z-8$ & $\frac{m}{2}-\frac{k}{2}-\frac{x}{2}-y-\frac{3}{2}z-3$ \\\hline
$s_3  s_1  s_2  s_3  s_1$ & $-k-x-y-z-4$ & $n-x-2y-2z-4$ & $-\frac{m}{2}-\frac{k}{2}-\frac{x}{2}-y-\frac{3}{2}z-4$ \\\hline
$s_3  s_1  s_2  s_3  s_2$ & $-n-k-x-y-z-5$ & $m-x-2y-2z-4$ & $\frac{m}{2}-\frac{k}{2}-\frac{x}{2}-y-\frac{3}{2}z-3$ \\\hline
$s_3  s_2  s_3  s_2  s_1$ & $n+k-x-y-z-1$ & $-m-x-2y-2z-6$ & $-\frac{m}{2}-\frac{k}{2}-\frac{x}{2}-y-\frac{3}{2}z-4$ \\\hline
$s_3  s_2  s_3  s_1  s_2$ & $k-x-y-z-2$ & $-n-x-2y-2z-6$ & $-\frac{m}{2}-n-\frac{k}{2}-\frac{x}{2}-y-\frac{3}{2}z-5$ \\\hline
$s_1  s_2  s_3  s_1  s_2  s_1$ & $-m-n-k-x-y-z-6$ & $-m-n-x-2y-2z-7$ & $-\frac{m}{2}+\frac{k}{2}-\frac{x}{2}-y-\frac{3}{2}z-3$ \\\hline
$s_2  s_3  s_1  s_2  s_3  s_1$ & $-k-x-y-z-4$ & $-m-n-2k-x-2y-2z-9$ & $-\frac{m}{2}-\frac{k}{2}-\frac{x}{2}-y-\frac{3}{2}z-4$ \\\hline
$s_2  s_3  s_1  s_2  s_3  s_2$ & $-n-k-x-y-z-5$ & $-n-2k-x-2y-2z-8$ & $\frac{m}{2}-\frac{k}{2}-\frac{x}{2}-y-\frac{3}{2}z-3$ \\\hline
$s_3  s_1  s_2  s_3  s_1  s_2$ & $-n-k-x-y-z-5$ & $-n-x-2y-2z-6$ & $-\frac{m}{2}-n-\frac{k}{2}-\frac{x}{2}-y-\frac{3}{2}z-5$ \\\hline
$s_3  s_1  s_2  s_3  s_2  s_1$ & $-m-n-k-x-y-z-6$ & $-m-x-2y-2z-6$ & $-\frac{m}{2}-\frac{k}{2}-\frac{x}{2}-y-\frac{3}{2}z-4$ \\\hline
$s_3  s_2  s_3  s_1  s_2  s_1$ & $k-x-y-z-2$ & $-m-n-x-2y-2z-7$ & $-\frac{m}{2}-n-\frac{k}{2}-\frac{x}{2}-y-\frac{3}{2}z-5$ \\\hline
$s_3  s_2  s_3  s_1  s_2  s_3$ & $-k-x-y-z-4$ & $-n-2k-x-2y-2z-8$ & $-\frac{m}{2}-n-\frac{3}{2}k-\frac{x}{2}-y-\frac{3}{2}z-6$ \\\hline
$s_2  s_3  s_1  s_2  s_3  s_2  s_1$ & $-m-n-k-x-y-z-6$ & $-m-n-2k-x-2y-2z-9$ & $-\frac{m}{2}-\frac{k}{2}-\frac{x}{2}-y-\frac{3}{2}z-4$ \\\hline
$s_2  s_3  s_1  s_2  s_3  s_1  s_2$ & $-n-k-x-y-z-5$ & $-m-2n-2k-x-2y-2z-10$ & $-\frac{m}{2}-n-\frac{k}{2}-\frac{x}{2}-y-\frac{3}{2}z-5$ \\\hline
$s_3  s_2  s_3  s_1  s_2  s_3  s_2$ & $-n-k-x-y-z-5$ & $-n-2k-x-2y-2z-8$ & $-\frac{m}{2}-n-\frac{3}{2}k-\frac{x}{2}-y-\frac{3}{2}z-6$ \\\hline
$s_3  s_1  s_2  s_3  s_1  s_2  s_1$ & $-m-n-k-x-y-z-6$ & $-m-n-x-2y-2z-7$ & $-\frac{m}{2}-n-\frac{k}{2}-\frac{x}{2}-y-\frac{3}{2}z-5$ \\\hline
$s_3  s_2  s_3  s_1  s_2  s_3  s_1$ & $-k-x-y-z-4$ & $-m-n-2k-x-2y-2z-9$ & $-\frac{m}{2}-n-\frac{3}{2}k-\frac{x}{2}-y-\frac{3}{2}z-6$ \\\hline
$s_2  s_3  s_1  s_2  s_3  s_1  s_2  s_1$ & $-m-n-k-x-y-z-6$ & $-m-2n-2k-x-2y-2z-10$ & $-\frac{m}{2}-n-\frac{k}{2}-\frac{x}{2}-y-\frac{3}{2}z-5$ \\\hline
$s_3  s_2  s_3  s_1  s_2  s_3  s_2  s_1$ & $-m-n-k-x-y-z-6$ & $-m-n-2k-x-2y-2z-9$ & $-\frac{m}{2}-n-\frac{3}{2}k-\frac{x}{2}-y-\frac{3}{2}z-6$ \\\hline
$s_3  s_2  s_3  s_1  s_2  s_3  s_1  s_2$ & $-n-k-x-y-z-5$ & $-m-2n-2k-x-2y-2z-10$ & $-\frac{m}{2}-n-\frac{3}{2}k-\frac{x}{2}-y-\frac{3}{2}z-6$ \\\hline
$s_3  s_2  s_3  s_1  s_2  s_3  s_1  s_2  s_1$ & $-m-n-k-x-y-z-6$ & $-m-2n-2k-x-2y-2z-10$ & $-\frac{m}{2}-n-\frac{3}{2}k-\frac{x}{2}-y-\frac{3}{2}z-6$
\end{longtable}
\end{center}
}}

Since $m,n,k,x,y,z \in \N$, we now notice that many $\sigma \in W$ give at least one negative coefficient $C_{i,\sigma}$ in the expression $C_{1,\sigma}\alpha_1+C_{2,\sigma}\alpha_2+C_{3,\sigma}\alpha_3$ of Table~\ref{table:all-sigmas}. Namely, of the 48 elements of the Weyl group the following $31$ elements produce at least one coefficient $C_{i,\sigma}$ which is negative:
\begingroup
\allowdisplaybreaks
\begin{align}\label{eq:type I elements}
&s_1s_2s_3, s_1s_2s_3s_1, s_1s_2s_3s_2, s_2s_3s_2s_1, s_2s_3s_1s_2, s_3s_2s_3s_1, s_3s_1s_2s_3, s_1s_2s_3s_2s_1, s_1s_2s_3s_1s_2, \\ 
&s_2s_3s_1s_2s_1, s_2s_3s_1s_2s_3, s_3s_1s_2s_3s_1, s_3s_1s_2s_3s_2, s_3s_2s_3s_2s_1, s_3s_2s_3s_1s_2, s_1s_2s_3s_1s_2s_1, \nonumber\\ 
&s_2s_3s_1s_2s_3s_1, s_2s_3s_1s_2s_3s_2, s_3s_1s_2s_3s_1s_2, s_3s_1s_2s_3s_2s_1, s_3s_2s_3s_1s_2s_1, s_3s_2s_3s_1s_2s_3, \nonumber\\ 
&s_2s_3s_1s_2s_3s_2s_1, s_2s_3s_1s_2s_3s_1s_2, s_3s_2s_3s_1s_2s_3s_2, s_3s_1s_2s_3s_1s_2s_1, s_3s_2s_3s_1s_2s_3s_1, \nonumber\\ 
&s_2s_3s_1s_2s_3s_1s_2s_1, s_3s_2s_3s_1s_2s_3s_2s_1, s_3s_2s_3s_1s_2s_3s_1s_2, s_3s_2s_3s_1s_2s_3s_1s_2s_1.\nonumber
\end{align}
\endgroup

Thus, the above elements of $W$ cannot be in the Weyl alternation set of any dominant integral weights $\lambda=m\w_1+n\w_2+k\w_3$ and $\mu=x\w_1+y\w_2+z\w_3$, since, for these elements, $\wp(\sigma(\lambda+\rho)-\rho-\mu) = 0$ for any choice of parameters $m,n,k,x,y,z \in \mathbb{N}$.  Equivalently, any subset that contains any of the above elements of $W$ is a forbidden alternation set, because as we have established by definition of the Weyl alternation sets, single elements of the Weyl group cannot contribute nontrivially to the multiplicity formula. This is what we refer to as a \textit{Type I contradiction}. For ease of reference we define this formally.

\begin{definition}
Let $\sigma\in W$ and $\lambda$ and $\mu$ be weights of $\ccc$.
If $\sigma(\lambda+\rho)-\rho-\mu=C_{1,\sigma}\al{1}+C_{2,\sigma}\al{2}+C_{3,\sigma}\al{3}$ and $C_{i,\sigma}<0$ for some $1\leq i\leq 3$, then we say $\sigma$ induces a \emph{Type I contradiction}.
\end{definition}

We now remark that based on the Type I contradictions this leaves a possible $17$ Weyl group elements which can appear in Weyl alternation sets, but they must satisfy that $\wp(\sigma(\lambda+\rho)-\rho-\mu) > 0$.
For sake of clarity, in Table \ref{table:18-elements}, we recreate the content of Table \ref{table:all-sigmas} only presenting the remaining $17$ Weyl group elements which may appear in Weyl alternation sets.

\begin{center}
\begin{longtable}[c]{| p{0.6in} | p{1.5in} | p{1.7in} | p{1.6in} | }
\caption{The expressions $C_{1,\sigma}\alpha_1+C_{2,\sigma}\alpha_2+C_{3,\sigma}\alpha_3$ for contributing $\sigma \in W$.} \label{table:18-elements} \\
\hline
$\sigma$ & $C_{1, \sigma}$ & $C_{2, \sigma}$ & $C_{3, \sigma}$\\\hline
\hline
\endfirsthead
\hline
$\sigma$ & $C_{1, \sigma}$ & $C_{2, \sigma}$ & $C_{3, \sigma}$\\\hline
\hline
\endhead

\hline
\endfoot

$1$ & $m+n+k-x-y-z$ & $m+2n+2k-x-2y-2z$ & $\frac{m}{2}+n+\frac{3}{2}k-\frac{x}{2}-y-\frac{3}{2}z$\\\hline
$s_1$ & $n+k-x-y-z-1$ & $m+2n+2k-x-2y-2z$ & $\frac{m}{2}+n+\frac{3}{2}k-\frac{x}{2}-y-\frac{3}{2}z$ \\\hline
$s_2$ & $m+n+k-x-y-z$ & $m+n+2k-x-2y-2z-1$ & $\frac{m}{2}+n+\frac{3}{2}k-\frac{x}{2}-y-\frac{3}{2}z$ \\\hline
$s_3$ & $m+n+k-x-y-z$ & $m+2n+2k-x-2y-2z$ & $\frac{m}{2}+n+\frac{k}{2}-\frac{x}{2}-y-\frac{3}{2}z-1$ \\\hline
$s_1  s_2$ & $k-x-y-z-2$ & $m+n+2k-x-2y-2z-1$ & $\frac{m}{2}+n+\frac{3}{2}k-\frac{x}{2}-y-\frac{3}{2}z$ \\\hline
$s_2  s_1$ & $n+k-x-y-z-1$ & $n+2k-x-2y-2z-2$ & $\frac{m}{2}+n+\frac{3}{2}k-\frac{x}{2}-y-\frac{3}{2}z$ \\\hline
$s_2  s_3$ & $m+n+k-x-y-z$ & $m+n-x-2y-2z-3$ & $\frac{m}{2}+n+\frac{k}{2}-\frac{x}{2}-y-\frac{3}{2}z-1$ \\\hline
$s_3  s_1$ & $n+k-x-y-z-1$ & $m+2n+2k-x-2y-2z$ & $\frac{m}{2}+n+\frac{k}{2}-\frac{x}{2}-y-\frac{3}{2}z-1$ \\\hline
$s_3  s_2$ & $m+n+k-x-y-z$ & $m+n+2k-x-2y-2z-1$ & $\frac{m}{2}+\frac{k}{2}-\frac{x}{2}-y-\frac{3}{2}z-2$ \\\hline
$s_1  s_2  s_1$ & $k-x-y-z-2$ & $n+2k-x-2y-2z-2$ & $\frac{m}{2}+n+\frac{3}{2}k-\frac{x}{2}-y-\frac{3}{2}z$ \\\hline
$s_2  s_3  s_1$ & $n+k-x-y-z-1$ & $n-x-2y-2z-4$ & $\frac{m}{2}+n+\frac{k}{2}-\frac{x}{2}-y-\frac{3}{2}z-1$ \\\hline
$s_2  s_3  s_2$ & $m+n+k-x-y-z$ & $m-x-2y-2z-4$ & $\frac{m}{2}+\frac{k}{2}-\frac{x}{2}-y-\frac{3}{2}z-2$ \\\hline
$s_3  s_2  s_1$ & $n+k-x-y-z-1$ & $n+2k-x-2y-2z-2$ & $-\frac{m}{2}+\frac{k}{2}-\frac{x}{2}-y-\frac{3}{2}z-3$ \\\hline
$s_3  s_1  s_2$ & $k-x-y-z-2$ & $m+n+2k-x-2y-2z-1$ & $\frac{m}{2}+\frac{k}{2}-\frac{x}{2}-y-\frac{3}{2}z-2$ \\\hline
$s_3  s_2  s_3$ & $m+n+k-x-y-z$ & $m+n-x-2y-2z-3$ & $\frac{m}{2}-\frac{k}{2}-\frac{x}{2}-y-\frac{3}{2}z-3$ \\\hline
$s_3  s_1  s_2  s_1$ & $k-x-y-z-2$ & $n+2k-x-2y-2z-2$ & $-\frac{m}{2}+\frac{k}{2}-\frac{x}{2}-y-\frac{3}{2}z-3$ \\\hline
$s_3  s_2  s_3  s_2$ & $m+n+k-x-y-z$ & $m-x-2y-2z-4$ & $\frac{m}{2}-\frac{k}{2}-\frac{x}{2}-y-\frac{3}{2}z-3$
\end{longtable}
\end{center}

Type I contradictions allow us to reduce the number of possible Weyl alternation sets $\mathcal{A}(\lambda,\mu)$ from $2^{48}$ to $2^{17}$.
However, $2^{17} = 131072$ possible subsets is still quite large. To find yet more forbidden alternation sets, we use logic similar to that presented in~\cite{CGHLMRK}. The authors of the article find contradictions within certain subsets of the Weyl group in order to discard them as potential alternation sets. First, for the sake of simplicity, we present the following change of variables:
\begin{center}
\begingroup
\allowdisplaybreaks
\begin{align} 
    a &= m+n+k-x-y-z,  &&h = n-x-2y-2z-4, \nonumber\\
    b &= n+k-x-y-z-1,  &&i = m-x-2y-2z-4, \nonumber\\
    c &= k-x-y-z-2,  &&j = \frac{m}{2}+n+\frac{3}{2}k-\frac{x}{2}-y-\frac{3}{2}z, \nonumber\\
    d &= m+2n+2k-x-2y-2z,  &&l = \frac{m}{2}+n+\frac{k}{2}-\frac{x}{2}-y-\frac{3}{2}z-1, \label{eq:lowecase-eq}\\
    e &= m+n+2k-x-2y-2z-1,  &&o = \frac{m}{2}+\frac{k}{2}-\frac{x}{2}-y-\frac{3}{2}z-2, \nonumber\\
    f &= n+2k-x-2y-2z-2,  &&p = -\frac{m}{2}+\frac{k}{2}-\frac{x}{2}-y-\frac{3}{2}z-3, \nonumber\\
    g &= m+n-x-2y-2z-3,  &&r = \frac{m}{2}-\frac{k}{2}-\frac{x}{2}-y-\frac{3}{2}z-3.\nonumber
\end{align}
\endgroup
\end{center} 
Using the new variables along with the expressions in Table~\ref{table:18-elements}, we rewrite each term of the multiplicity formula as follows: 

\begingroup
\allowdisplaybreaks
\begin{align}\nonumber
    A &= \wp(1(\lambda + \rho)-(\mu + \rho)) = \wp(a \al{1} + d \al{2} + j \al{3}),  \\
    B &= \wp(s_1(\lambda + \rho)-(\mu + \rho)) = \wp(b \al{1} + d \al{2} + j \al{3}), \nonumber\\
    C &= \wp(s_2(\lambda + \rho)-(\mu + \rho)) = \wp(a \al{1} + e \al{2} + j \al{3}), \nonumber\\
    D &= \wp(s_3(\lambda + \rho)-(\mu + \rho)) = \wp(a \al{1} + d \al{2} + l \al{3}), \nonumber\\
    E &= \wp(s_1  s_2(\lambda + \rho)-(\mu + \rho)) = \wp(c \al{1} + e \al{2} + j \al{3}), \nonumber\\
    F &= \wp(s_2  s_1(\lambda + \rho)-(\mu + \rho)) = \wp(b \al{1} + f \al{2} + j \al{3}), \nonumber\\
    G &= \wp(s_2  s_3(\lambda + \rho)-(\mu + \rho)) = \wp(a \al{1} + g \al{2} + l \al{3}), \nonumber\\
    H &= \wp(s_3  s_1(\lambda + \rho)-(\mu + \rho)) = \wp(b \al{1} + d \al{2} + l \al{3}), \nonumber\\
    I &= \wp(s_3  s_2(\lambda + \rho)-(\mu + \rho)) = \wp(a \al{1} + e \al{2} + o\al{3}), \label{eq:so many equations}\\
    J &= \wp(s_1  s_2  s_1(\lambda + \rho)-(\mu + \rho)) = \wp(c \al{1} + f \al{2} + j \al{3}), \nonumber\\
    K &= \wp(s_2  s_3  s_1(\lambda + \rho)-(\mu + \rho)) = \wp(b \al{1} + h \al{2} + l \al{3}), \nonumber\\
    L &= \wp(s_2  s_3  s_2(\lambda + \rho)-(\mu + \rho)) = \wp(a \al{1} + i \al{2} + o\al{3}), \nonumber\\
    M &= \wp(s_3  s_2  s_1(\lambda + \rho)-(\mu + \rho)) = \wp(b \al{1} + f \al{2} + p \al{3}), \nonumber\\
    N &= \wp(s_3  s_1  s_2(\lambda + \rho)-(\mu + \rho)) = \wp(c \al{1} + e \al{2} + o\al{3}), \nonumber\\
    O &= \wp(s_3  s_2  s_3(\lambda + \rho)-(\mu + \rho)) = \wp(a \al{1} + g \al{2} + r \al{3}), \nonumber\\
    P &= \wp(s_3  s_1  s_2  s_1(\lambda + \rho)-(\mu + \rho)) = \wp(c \al{1} + f \al{2} + p \al{3}), \nonumber\\
    Q &= \wp(s_3  s_2  s_3  s_2(\lambda + \rho)-(\mu + \rho)) = \wp(a \al{1} + i \al{2} +r \al{3}). \nonumber
\end{align}
\endgroup

Note that there are still instances where certain values of $m,n,k,x,y,z \in \N$ result in some of the above expressions evaluating to zero. 
In this analysis we always begin by fixing an arbitrary set of values $m,n,k,x,y,z \in \N$, and then we say that the corresponding Weyl group element $\sigma$ contributes \emph{trivially} to $m(\lambda,\mu)$ whenever $\wp(\sigma(\lambda+\rho)-\rho-\mu)=0$. If instead $\wp(\sigma(\lambda+\rho)-\rho-\mu)>0$, we say that $\sigma$ contributes \emph{nontrivially} to $m(\lambda,\mu)$. As the contributions of these terms are always in reference to the multiplicity formula, we often omit the wording ``to $m(\lambda,\mu)$'' whenever we refer to an element contributing trivially or nontrivially. Moreover, 
we remark that in each of the expression listed in \eqref{eq:so many equations} we gather information in three ways: the value of the partition function denoted by a capital letter, the corresponding Weyl group element used in computing $\sigma(\lambda+\rho)-\rho-\mu$, and a triple of lower case letters denoting the coefficients of the simple roots in the expression $\sigma(\lambda+\rho)-\rho-\mu$.
Lastly, as a Weyl group element corresponds uniquely to an expression in \eqref{eq:so many equations}, we also say that the value of the partition function (denoted by capital letters) contributes nontrivially/trivially to denote the same phenomena/behavior.

In the work that follows, we show that of the remaining $2^{17}$ possible subsets of the Weyl group which could be Weyl alternation sets, only $46$ actually are such sets. To establish this, we define two new types of contradictions.  To begin this process, we let $\vee$ denote the Boolean operator `or' and $\wedge$ denote the Boolean operator `and'. Note that since the expressions $a$ through $r$ are always integral (which we establish in Lemma~\ref{lemma:divisible} below), our expressions $A$ through $Q$ contribute nontrivially only when $a$ through $r$ are nonnegative. To discuss this further, we define the following statements:

\begin{multicols}{7}
\noindent
$$a_0: a \geq 0,$$
$$a_1: a < 0,$$
$$b_0: b \geq 0,$$
$$b_1: b < 0,$$
$$c_0: c \geq 0,$$
$$c_1: c < 0,$$
$$d_0: d \geq 0,$$
$$d_1: d < 0,$$
$$e_0: e \geq 0,$$
$$e_1: e < 0,$$
$$f_0: f \geq 0,$$
$$f_1: f < 0,$$
$$g_0: g \geq 0,$$
$$g_1: g < 0,$$
$$h_0: h \geq 0,$$
$$h_1: h < 0,$$
$$i_0: i \geq 0,$$
$$i_1: i < 0,$$
$$j_0: j \geq 0,$$
$$j_1: j < 0,$$
$$l_0: l \geq 0,$$
$$l_1: l < 0,$$
$$o_0: o \geq 0,$$
$$o_1: o < 0,$$
$$p_0: p \geq 0,$$
$$p_1: p < 0,$$
$$r_0: r \geq 0,$$
$$r_1: r < 0.$$
\end{multicols}

Having defined these expressions, we now provide a necessary condition which characterizes when a weight on the fundamental weight lattice $\mathbb{Z}\w_1\oplus\mathbb{Z}\w_2\oplus\mathbb{Z}\w_3$ is also in the root lattice $\mathbb{Z}\alpha_1\oplus\mathbb{Z}\alpha_2\oplus\mathbb{Z}\alpha_3$. 

\begin{lemma}\label{lemma:divisible}
Let $\lambda = m\w_1+n\w_2+k\w_3$ and $\mu = x\w_1+y\w_2+z\w_3$ with $m,n,k,x,y,z\in\Z$ and let $\sigma\in W$. Then, $m+k+x+z$ is divisible by 2 if and only if $\sigma(\lambda)-\mu\in\Z\al{1}\oplus \Z\al{2}\oplus \Z\al{3}$.
\end{lemma}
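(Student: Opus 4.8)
The plan is to analyze when the change of basis from fundamental weights to simple roots sends $\sigma(\lambda)-\mu$ into the root lattice, by reducing to the case $\sigma=1$ and then handling general $\sigma$ using the action of the simple reflections. First I would observe that since the root lattice $\Z\al{1}\oplus\Z\al{2}\oplus\Z\al{3}$ is $W$-stable (each $s_i$ permutes, up to sign and integer combinations, the simple roots, as recorded in the action formulas in Section~\ref{sec:background}), the condition $\sigma(\lambda)-\mu\in\Z\al{1}\oplus\Z\al{2}\oplus\Z\al{3}$ is equivalent to $\sigma(\lambda)-\mu\in\Z\al{1}\oplus\Z\al{2}\oplus\Z\al{3}$ being checked modulo the root lattice; that is, writing $Q=\Z\al{1}\oplus\Z\al{2}\oplus\Z\al{3}$ and $P=\Z\w_1\oplus\Z\w_2\oplus\Z\w_3$, the quotient $P/Q$ is a finite abelian group on which $W$ acts, and $\sigma(\lambda)-\mu \in Q$ iff $\sigma(\bar\lambda)=\bar\mu$ in $P/Q$, where $\bar{\cdot}$ denotes the image in $P/Q$.

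Next I would compute $P/Q$ explicitly. Using $\w_1=\al{1}+\al{2}+\tfrac12\al{3}$, $\w_2=\al{1}+2\al{2}+\al{3}$, $\w_3=\al{1}+2\al{2}+\tfrac32\al{3}$, one sees $\w_2\in Q$ while $2\w_1\in Q$ and $2\w_3\in Q$, and moreover $\w_1-\w_3=-\tfrac12\al{3}\cdot(\text{something})$ — more precisely $\w_3-\w_1=\al{2}+\al{3}\in Q$, so $\bar\w_1=\bar\w_3$ in $P/Q$. Hence $P/Q\cong\Z/2\Z$, generated by $\bar\w_1=\bar\w_3$, with $\bar\w_2=0$. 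Therefore $\overline{m\w_1+n\w_2+k\w_3}=(m+k)\bar\w_1$ and $\overline{x\w_1+y\w_2+z\w_3}=(x+z)\bar\w_1$ in $\Z/2\Z$. Then from \eqref{eq:simples acting on fun} each $s_i$ fixes every $\w_j$ modulo $Q$ (since $s_i(\w_j)=\w_j$ or $\w_j-\al{j}$, and $\al{j}\in Q$), so $W$ acts trivially on $P/Q$; thus $\sigma(\bar\lambda)=\bar\lambda=(m+k)\bar\w_1$ for every $\sigma\in W$. Consequently $\sigma(\lambda)-\mu\in Q$ iff $(m+k)\bar\w_1=(x+z)\bar\w_1$ in $\Z/2\Z$, i.e.\ iff $m+k\equiv x+z\pmod 2$, which is exactly the condition that $m+k+x+z$ is even.

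An alternative, more hands-on route that avoids the language of $P/Q$: directly expand $\lambda$ and $\mu$ in the simple root basis as already done in Section~\ref{sec:background}, so that the only possibly non-integral coefficient is that of $\al{3}$, namely $\tfrac{m}{2}+n+\tfrac{3}{2}k$ for $\lambda$ and $\tfrac{x}{2}+y+\tfrac{3}{2}z$ for $\mu$; the $\al{1}$ and $\al{2}$ coefficients are automatically integers. The $\al{3}$-coefficient of $\lambda$ is an integer iff $m+k$ is even, and similarly for $\mu$. Then one checks from the explicit matrices for $s_1,s_2,s_3$ that applying any generator to a vector $a\al{1}+b\al{2}+c\al{3}$ changes the $\al{3}$-coefficient only by adding integers (indeed $s_1$ fixes $c$, $s_2$ sends $c\mapsto c$ after recombination — one verifies $s_2(a\al1+b\al2+c\al3)$ has $\al3$-coefficient $c$ — and $s_3$ sends $c\mapsto -c$), so the parity class of the $\al{3}$-coefficient modulo $\Z$ is $W$-invariant; hence $\sigma(\lambda)$ has integral $\al{3}$-coefficient iff $\lambda$ does. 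Putting these together, $\sigma(\lambda)-\mu\in Q$ iff both $\tfrac{m}{2}+n+\tfrac{3}{2}k-(\tfrac{x}{2}+y+\tfrac{3}{2}z)\in\Z$ and the $\al1,\al2$ coefficients lie in $\Z$ (automatic), and the former holds iff $m+k+x+z\equiv 0\pmod 2$.

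The only mild obstacle is verifying the $W$-invariance claim cleanly — i.e.\ that each generator changes the $\al{3}$-coefficient by an integer (equivalently, that $Q$ is $W$-stable, which is standard since $Q$ is spanned by roots and $W$ permutes roots, but here one may also just read it off the three displayed $3\times 3$ actions). Once that is in hand the computation of $P/Q\cong\Z/2$ and the parity bookkeeping are routine. I would present the $P/Q$ argument as the main proof, since it is the shortest and most conceptual, and the statement holds for all $\sigma\in W$ uniformly because $W$ acts trivially on $P/Q$.
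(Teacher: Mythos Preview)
Your proposal is correct, and both routes you outline (the $P/Q$ argument and the hands-on $\alpha_3$-coefficient tracking) give clean, uniform proofs that work for every $\sigma\in W$ at once.

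The paper takes a substantially more computational path. For the forward implication, it refers the reader to the precomputed table of coefficients $C_{1,\sigma},C_{2,\sigma},C_{3,\sigma}$ of $\sigma(\lambda+\rho)-(\mu+\rho)$ and observes by inspection that these are all integers once $m+k+x+z$ is even; it then strips off the $\rho$-shift via $\sigma(\rho)-\rho\in Q$. For the reverse implication, it computes $\sigma(\lambda)-\mu$ explicitly for a single representative element ($\sigma=s_3s_2s_3s_2$), reads off that the $\alpha_3$-coefficient being integral forces $m+k+x+z$ to be even, and declares the remaining $\sigma$ analogous. So the paper's argument is essentially case-by-case and leans on tables it has already built for other purposes, whereas your $P/Q\cong\Z/2\Z$ computation together with the observation that $W$ acts trivially on $P/Q$ (immediate from $s_i(\varpi_j)\equiv\varpi_j\bmod Q$) dispatches all $\sigma$ simultaneously and explains \emph{why} the condition reduces to a single parity. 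Your approach is shorter, more conceptual, and would transfer to other Lie types without change; the paper's approach has the virtue of being entirely elementary and of reusing data (the coefficient table) that the authors need anyway for the rest of Section~4.
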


\begin{proof}
Let $\lambda$ and $\mu$ be defined as above and let $\sigma\in W$.
Assume $m+k+x+z$ is divisible by two. Then, by inspection of Table~\ref{table:18-elements}, we observe that $\sigma(\lambda+\rho)-(\mu+\rho)\in\ZZ\al{1}\oplus\ZZ\al{2}\oplus\ZZ\al{3}.$ Since $\sigma$ is linear, we then have that $\sigma(\lambda+\rho)-(\mu+\rho)=\sigma(\lambda)-\mu+\left(\sigma(\rho)-\rho\right).$ Then, we can equivalently express $\rho=\w_1+\w_2+\w_3$ and, again, since  $\sigma$'s action on $\rho$ is given by subtracting a nonnegative integer linear combination of the simple roots (see \eqref{eq:simples acting on fun}), we know that  $\sigma(\rho)-\rho\in\ZZ\al{1}\oplus\ZZ\al{2}\oplus\ZZ\al{3}$. Thus
$\sigma(\lambda)-\mu\in \ZZ\al{1}\oplus\ZZ\al{2}\oplus\ZZ\al{3}$.

To prove the opposite direction, we illustrate the approach by considering the case where $\sigma=s_3s_2s_3s_2$, as all other cases are analogous. Assume that $\sigma(\lambda)-\mu\in\ZZ\al{1}\oplus\ZZ\al{2}\oplus\ZZ\al{3}$. A direct computation shows
\begin{align}\label{eq:sigma lambda -mu}
    \sigma(\lambda)-\mu&= \left(m+n+k-x-y-z\right)\al{1}+\left(m-x-2y-2z\right)\al{2}+\left(\frac{m}{2}-\frac{k}{2}-\frac{x}{2}-y-\frac{3z}{2}\right)\al{3}.
\end{align}
Note that the coefficients of $\al{1}$ and $\al{2}$ in \eqref{eq:sigma lambda -mu} are both integers since by assumption $m,n,k,x,y,z \in \ZZ$. We can rewrite the  coefficient of $\al{3}$  as $$\frac{m-k-x-z}{2}-(y+z)\in\mathbb{Z}.$$ Note that $y+z\in\ZZ$ since $y,z\in\ZZ$ by assumption. This then implies that $\frac{m-k-x-z}{2}\in\mathbb{Z}.$ Hence, $m-k-x-z$ is divisible by $2$. Further, as $m+k+x+z=m-k-x-z +2(k+x+z)$  and both $m-k-x-z$ and $2(k+x+z)$ are divisible by 2, so is $m+k+x+z$, as claimed. 
\end{proof}

\noindent The following key result follows from Lemma~\ref{lemma:divisible} allows us to identify the remaining forbidden alternation sets.
\begin{lemma}\label{lemma:NT}
Let $\lambda=m\w_1+n\w_2+k\w_3$ and $\mu=x\w_1+y\w_2+z\w_3$ with $m,n,k,x,y,z\in\mathbb{Z}$and $\frac{m+k+x+z}{2} \in \Z$ and let the expressions $A$ through $Q$ be defined as in \eqref{eq:so many equations}. Then, we have the following biconditional statements:
\begin{center}
$A$ contributes nontrivially if and only if $a_0 \wedge d_0 \wedge j_0$ holds true, \\
$B$ contributes nontrivially if and only if $b_0 \wedge d_0 \wedge j_0$ holds true, \\
$C$ contributes nontrivially if and  only if $a_0 \wedge e_0 \wedge j_0$ holds true, \\
$D$ contributes nontrivially if and  only if $a_0 \wedge d_0 \wedge l_0$ holds true, \\
$E$ contributes nontrivially if and  only if $c_0 \wedge e_0 \wedge j_0$ holds true, \\
$F$ contributes nontrivially if and  only if $b_0 \wedge f_0 \wedge j_0$ holds true, \\
$G$ contributes nontrivially if and  only if $a_0 \wedge g_0 \wedge l_0$ holds true, \\
$H$ contributes nontrivially if and  only if $b_0 \wedge d_0 \wedge l_0$ holds true, \\
$I$ contributes nontrivially if and  only if $a_0 \wedge e_0 \wedge o_0$ holds true, \\
$J$ contributes nontrivially if and  only if $c_0 \wedge f_0 \wedge j_0$ holds true, \\
$K$ contributes nontrivially if and  only if $b_0 \wedge h_0 \wedge l_0$ holds true, \\
$L$ contributes nontrivially if and  only if $a_0 \wedge i_0 \wedge o_0$ holds true, \\
$M$ contributes nontrivially if and  only if $b_0 \wedge f_0 \wedge p_0$ holds true, \\
$N$ contributes nontrivially if and  only if $c_0 \wedge e_0 \wedge o_0$ holds true, \\
$O$ contributes nontrivially if and  only if $a_0 \wedge g_0 \wedge r_0$ holds true, \\
$P$ contributes nontrivially if and  only if $c_0 \wedge f_0 \wedge p_0$ holds true, \\
$Q$ contributes nontrivially if and  only if $a_0 \wedge i_0 \wedge r_0$ holds true.
\end{center}
\end{lemma}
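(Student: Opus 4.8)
The plan is to reduce all seventeen biconditionals to a single structural fact about Kostant's partition function, combined with the integrality statement already proved in Lemma~\ref{lemma:divisible}. The structural fact is: for any weight $\xi = c_1\al{1}+c_2\al{2}+c_3\al{3}$ of $\ccc$ with $c_1,c_2,c_3\in\Z$, we have $\wp(\xi)>0$ if and only if $c_1,c_2,c_3\geq 0$. First I would prove this. For the ``only if'' direction, note that every element of $\Phi^+$ for $\ccc$ (listed in Section~\ref{sec:background}) is a nonnegative integral combination of $\al{1},\al{2},\al{3}$; hence any nonnegative integral combination of positive roots again has nonnegative simple-root coefficients, so $\wp(\xi)>0$ forces $c_1,c_2,c_3\geq 0$. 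For the ``if'' direction, if $c_1,c_2,c_3$ are nonnegative integers, then, since $\al{1},\al{2},\al{3}\in\Phi^+$, the expression $\xi = c_1\al{1}+c_2\al{2}+c_3\al{3}$ is itself a partition of $\xi$ into positive roots, so $\wp(\xi)\geq 1$.

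Next I would verify that, under the hypothesis $\tfrac{m+k+x+z}{2}\in\Z$, the quantities $a,b,c,d,e,f,g,h,i,j,l,o,p,r$ of \eqref{eq:lowecase-eq} are all integers, so that the structural fact applies. This follows from Lemma~\ref{lemma:divisible}: writing $\sigma(\lambda+\rho)-(\mu+\rho)=\bigl(\sigma(\lambda)-\mu\bigr)+\bigl(\sigma(\rho)-\rho\bigr)$, using $\rho=\w_1+\w_2+\w_3$ and \eqref{eq:simples acting on fun} (which shows $\sigma(\rho)-\rho\in\Z\al{1}\oplus\Z\al{2}\oplus\Z\al{3}$ for every $\sigma\in W$), the simple-root coefficients of $\sigma(\lambda+\rho)-(\mu+\rho)$ are integers exactly when those of $\sigma(\lambda)-\mu$ are, i.e., exactly when $m+k+x+z$ is even. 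Thus each of the coefficient triples appearing among the seventeen rows of \eqref{eq:so many equations} consists of integers, and the integrality condition is automatically met for each of these $\sigma$.

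Finally I would conclude term by term. For $A=\wp(a\al{1}+d\al{2}+j\al{3})$ with $a,d,j\in\Z$, the structural fact gives $A>0 \Longleftrightarrow (a\geq 0)\wedge(d\geq 0)\wedge(j\geq 0)$, which is precisely ``$A$ contributes nontrivially if and only if $a_0\wedge d_0\wedge j_0$''. Running the identical argument on the remaining sixteen rows $B,C,\dots,Q$ of \eqref{eq:so many equations}, reading off their coefficient triples $(b,d,j),(a,e,j),(a,d,l),\dots,(a,i,r)$ from Table~\ref{table:18-elements}, produces all the claimed biconditionals.

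I do not expect a serious obstacle: the partition-function argument is immediate once one observes that the three simple roots are themselves positive roots of $\ccc$, and the integrality input is supplied by Lemma~\ref{lemma:divisible}. The only care required is bookkeeping — confirming that the reduction to the pure nonnegativity condition is legitimate simultaneously for all seventeen Weyl group elements, and matching each capital letter $A,\dots,Q$ to the correct coefficient triple.
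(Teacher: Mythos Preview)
Your proposal is correct and follows essentially the same approach as the paper: invoke Lemma~\ref{lemma:divisible} to ensure the simple-root coefficients are integers, and then observe that $\wp(\xi)>0$ if and only if those coefficients are nonnegative. In fact, your write-up is more explicit than the paper's, which simply states these two facts without spelling out the ``structural fact'' about $\wp$ or the term-by-term matching.
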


\begin{proof}
Since $\lambda,\mu\in \Z\al{1}\oplus\Z\al{2}\oplus\Z\al{3}$ and $\frac{m+k+x+z}{2} \in \Z$, then by Lemma~\ref{lemma:divisible} we know the expressions $a$ through $o$ are integral. Thus, a term in the multiplicity formula contributes nontrivially if and only if its corresponding coefficients are nonnegative.
\end{proof}

\noindent Negating the biconditional statements of Lemma~\ref{lemma:NT}, we arrive at the following.

\begin{lemma}Let the expressions $A$ through $Q$ be defined as above. Then, we have the following biconditional statements:

\begin{center}
$A$ contributes trivially if and only if  $a_1 \vee d_1 \vee j_1$ holds true, \\
$B$ contributes trivially if and only if  $b_1 \vee d_1 \vee j_1$ holds true, \\
$C$ contributes trivially if and only if  $a_1 \vee e_1 \vee j_1$ holds true, \\
$D$ contributes trivially if and only if  $a_1 \vee d_1 \vee l_1$ holds true, \\
$E$ contributes trivially if and only if  $c_1 \vee e_1 \vee j_1$ holds true, \\
$F$ contributes trivially if and only if  $b_1 \vee f_1 \vee j_1$ holds true, \\
$G$ contributes trivially if and only if  $a_1 \vee g_1 \vee l_1$ holds true, \\
$H$ contributes trivially if and only if  $b_1 \vee d_1 \vee l_1$ holds true, \\
$I$ contributes trivially if and only if  $a_1 \vee e_1 \vee o_1$ holds true, \\
$J$ contributes trivially if and only if  $c_1 \vee f_1 \vee j_1$ holds true, \\
$K$ contributes trivially if and only if  $b_1 \vee h_1 \vee l_1$ holds true, \\
$L$ contributes trivially if and only if  $a_1 \vee i_1 \vee o_1$ holds true, \\
$M$ contributes trivially if and only if  $b_1 \vee f_1 \vee p_1$ holds true, \\
$N$ contributes trivially if and only if  $c_1 \vee e_1 \vee o_1$ holds true, \\
$O$ contributes trivially if and only if  $a_1 \vee g_1 \vee r_1$ holds true, \\
$P$ contributes trivially if and only if  $c_1 \vee f_1 \vee p_1$ holds true, \\
$Q$ contributes trivially if and only if  $a_1 \vee i_1 \vee r_1$ holds true.
\end{center}
\end{lemma}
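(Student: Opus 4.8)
The plan is to recognize that this lemma is nothing more than the logical negation of Lemma~\ref{lemma:NT}, rewritten in disjunctive form via De~Morgan's law. First I would record the trivial but essential observation that, for any fixed choice of $m,n,k,x,y,z\in\mathbb{Z}$ (with $\frac{m+k+x+z}{2}\in\mathbb{Z}$, the hypothesis carried over from Lemma~\ref{lemma:NT}), each of the expressions $A$ through $Q$ either contributes trivially or contributes nontrivially, and these two alternatives are mutually exclusive and exhaustive: by definition $\sigma$ contributes trivially when $\wp(\sigma(\lambda+\rho)-\rho-\mu)=0$ and nontrivially when $\wp(\sigma(\lambda+\rho)-\rho-\mu)>0$. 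Hence ``$A$ contributes trivially'' is precisely the negation of ``$A$ contributes nontrivially,'' and similarly for $B,C,\dots,Q$.

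Next I would apply Lemma~\ref{lemma:NT} one entry at a time. For instance, that lemma states $A$ contributes nontrivially if and only if $a_0 \wedge d_0 \wedge j_0$; negating both sides of this biconditional gives that $A$ contributes trivially if and only if $\lnot(a_0 \wedge d_0 \wedge j_0)$, and by De~Morgan's law this is equivalent to $(\lnot a_0) \vee (\lnot d_0) \vee (\lnot j_0)$. Finally, for each variable $v \in \{a,b,c,d,e,f,g,h,i,j,l,o,p,r\}$, the statement $v_1 : v<0$ is by construction exactly the negation of $v_0 : v\geq 0$ (the two cases exhaust all possibilities since each such $v$ is a real number), so $\lnot a_0 = a_1$, $\lnot d_0 = d_1$, $\lnot j_0 = j_1$, yielding ``$A$ contributes trivially if and only if $a_1 \vee d_1 \vee j_1$.'' Carrying this out verbatim for the remaining sixteen expressions $B$ through $Q$ — each of which, in Lemma~\ref{lemma:NT}, is characterized by a conjunction of exactly three of the $v_0$ statements — produces the sixteen remaining biconditionals in the stated list.

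I do not expect a genuine obstacle; the proof is a uniform, mechanical application of De~Morgan's law together with the definitions of the statements $v_0$ and $v_1$. The one point requiring care is the role of the integrality hypothesis inherited from Lemma~\ref{lemma:NT}: the equivalence between ``all relevant coefficients nonnegative'' and ``nontrivial contribution'' relies on the coefficients $a$ through $r$ being integers, which is guaranteed by $\frac{m+k+x+z}{2}\in\mathbb{Z}$ through Lemma~\ref{lemma:divisible}. Without that assumption a coefficient could be a positive non-integer, so that the term contributes trivially even though every $v_0$ in the conjunction holds; this is why the lemma must be read under the same hypothesis as Lemma~\ref{lemma:NT}. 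With that in place, negation of Lemma~\ref{lemma:NT} is the entire argument.
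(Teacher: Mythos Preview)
Your proposal is correct and matches the paper's own approach exactly: the paper simply states that the lemma follows by negating the biconditional statements of Lemma~\ref{lemma:NT}, which is precisely the De~Morgan argument you outline. Your additional remark about the integrality hypothesis is a helpful clarification that the paper leaves implicit.
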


Next, we present an example to illustrate our use of contradictions to logical statements in order to further reduce the number of possible alternation sets. 

\begin{example}\label{ex:type ii contradiction}
Assume $m(\lambda,\mu) = A + F$; this implies that $A$ and $F$ contribute nontrivially while $B$, $C$, $D$, $E$, $G$, $H$, $I$, $J$, $K$, $L$, $M$, $N$, $O$, $P$, and $Q$ contribute trivially. Then, the following logical statement must be true:

\begin{center}
    \((a_0 \wedge d_0 \wedge j_0) \wedge (b_1 \vee d_1 \vee j_1) \wedge (a_1 \vee e_1 \vee j_1) \wedge (a_1 \vee d_1 \vee l_1) \wedge (c_1 \vee e_1 \vee j_1) \wedge (b_0 \wedge f_0 \wedge j_0) \wedge\)\\
    \( (a_1 \vee g_1 \vee l_1) \wedge (b_1 \vee e_1 \vee l_1) \wedge (a_1 \vee e_1 \vee o_1) \wedge (c_1 \vee f_1 \vee j_1) \wedge(b_1 \vee h_1 \vee l_1) \wedge (a_1 \vee i_1 \vee o_1) \wedge\)\\
    \( (b_1 \vee f_1 \vee p_1) \wedge (c_1 \vee e_1 \vee o_1) \wedge(a_1 \vee g_1 \vee r_1) \wedge(c_1 \vee f_1 \vee n_1) \wedge (a_1 \vee i_1 \vee r_1).\)\\
\end{center}

\noindent Notice, however, that this logical statement contains $(b_0 \wedge d_0 \wedge j_0) \wedge (b_1 \vee d_1 \vee j_1)$ which can never be true.
Thus $m(\lambda,\mu) \neq A+F$ for any $m,n,k,x,y,z \in \N$ and, therefore,  $\{1,s_2s_1\}$ is a forbidden Weyl alternation set whenever $\lambda$ and $\mu$ are dominant integral weights. 
\end{example}

Next, we provide a formal definition of the type of contradiction we encountered in Example~\ref{ex:type ii contradiction}.

\begin{definition}\label{def:type ii contradictions}
Given logical statements $X,Y,Z$ and their negations $\lnot X, \lnot Y, \lnot Z$, respectively, the logical statement 
\[(X\wedge Y\wedge Z)\wedge (\lnot X \vee \lnot Y\vee \lnot Z)\] is what we refer to as a \emph{Type II contradiction}.

\end{definition}

A short nonexhaustive list of Type II contradictions that arise in finding forbidden alternation sets is provided in Table~\ref{table:contradictions}. These Type II contradictions allow us to identify most of the forbidden Weyl alternation sets. 

\begin{center}
\begin{longtable}[c]{| p{1.6in} | p{1.4in} | p{2in} | }
\caption{Examples of Type II contradictions.} \label{table:contradictions}\\
\hline
Contribute nontrivially & Contribute trivially & Contradiction\\ \hline
\hline
\endfirsthead
\hline
Contribute nontrivially & Contribute trivially & Contradiction \\ \hline
\hline
\endhead

\hline
\endfoot

$A,E$ & $C$  & $(a_0 \wedge e_0 \wedge k_0) \wedge (a_1 \vee e_1 \vee k_1)$  \\
\hline
$A,G$ & $D$  & $(a_0 \wedge e_0 \wedge l_0) \wedge (a_1 \vee d_1 \vee l_1)$  \\
\hline
$A,H$ & $D$  & $(a_0 \wedge d_0 \wedge l_0) \wedge (a_1 \vee d_1 \vee l_1)$  \\
 & $B$ & $(b_0 \wedge d_0 \wedge k_0) \wedge (b_1 \vee d_1 \vee k_1)$ \\
\hline
$A,I$ & $C$  & $(a_0 \wedge e_0 \wedge k_0) \wedge (a_1 \vee e_1 \vee k_1)$  \\
\hline
$A,K$ & $D$  & $(a_0 \wedge d_0 \wedge l_0) \wedge (a_1 \vee d_1 \vee l_1)$  \\
 & $B$ & $(b_0 \wedge e_0 \wedge k_0) \wedge (b_1 \vee d_1 \vee k_1)$ \\
 & $H$ & $(b_0 \wedge d_0 \wedge l_0) \wedge (b_1 \vee d_1 \vee l_1)$\\
\hline
$A,M$ & $B$ & $(b_0 \wedge d_0 \wedge k_0) \wedge (b_1 \vee d_1 \vee k_1)$ \\
 & $F$ & $(b_0 \wedge f_0 \wedge k_0) \wedge (b_1 \vee f_1 \vee k_1)$ \\
\hline
$A,N$ & $I$ & $(a_0 \wedge e_0 \wedge m_0) \wedge (a_1 \vee e_1 \vee m_1)$ \\
 & $C$ & $(a_0 \wedge e_0 \wedge k_0) \wedge (a_1 \vee e_1 \vee k_1)$ \\
 & $E$ & $(c_0 \wedge e_0 \wedge k_0) \wedge (c_1 \vee e_1 \vee k_1)$ \\
\hline
$A,P$ & $J$ & $(c_0 \wedge f_0 \wedge k_0) \wedge (c_1 \vee f_1 \vee k_1)$ \\
\hline
$B,C$ & $A$ & $(a_0 \wedge d_0 \wedge k_0) \wedge (a_1 \vee d_1 \vee k_1)$ \\
\hline
$B,D$ & $A$ & $(a_0 \wedge d_0 \wedge k_0) \wedge (a_1 \vee d_1 \vee k_1)$ \\
 & $H$ & $(b_0 \wedge d_0 \wedge l_0) \wedge (b_1 \vee d_1 \vee l_1)$ \\
\hline
$B,G$ & $H$ & $(b_0 \wedge d_0 \wedge l_0) \wedge (b_1 \vee d_1 \vee l_1)$ \\
 & $A$ & $(a_0 \wedge d_0 \wedge k_0) \wedge (a_1 \vee d_1 \vee k_1)$ \\
 & $D$ & $(a_0 \wedge d_0 \wedge l_0) \wedge (a_1 \vee d_1 \vee l_1)$ \\
\hline
$A,E$ & $C$  & $(a_0 \wedge f_0 \wedge k_0) \wedge (a_1 \vee f_1 \vee k_1)$  \\
\hline
$A,G$ & $D$  & $(a_0 \wedge e_0 \wedge l_0) \wedge (a_1 \vee e_1 \vee l_1)$  \\
\hline
$A,H$ & $D$  & $(a_0 \wedge e_0 \wedge l_0) \wedge (a_1 \vee e_1 \vee l_1)$  \\
 & $B$ & $(b_0 \wedge e_0 \wedge k_0) \wedge (b_1 \vee e_1 \vee k_1)$ \\
\hline
$A,I$ & $C$  & $(a_0 \wedge f_0 \wedge k_0) \wedge (a_1 \vee f_1 \vee k_1)$  \\
\hline
$A,K$ & $D$  & $(a_0 \wedge e_0 \wedge l_0) \wedge (a_1 \vee e_1 \vee l_1)$  \\
 & $B$ & $(b_0 \wedge e_0 \wedge k_0) \wedge (b_1 \vee e_1 \vee k_1)$ \\
 & $H$ & $(b_0 \wedge e_0 \wedge l_0) \wedge (b_1 \vee e_1 \vee l_1)$\\
\hline
$A,M$ & $B$ & $(b_0 \wedge e_0 \wedge k_0) \wedge (b_1 \vee e_1 \vee k_1)$ \\
 & $F$ & $(b_0 \wedge g_0 \wedge k_0) \wedge (b_1 \vee g_1 \vee k_1)$ \\
\hline
$A,N$ & $I$ & $(a_0 \wedge f_0 \wedge m_0) \wedge (a_1 \vee f_1 \vee m_1)$ \\
 & $C$ & $(a_0 \wedge f_0 \wedge k_0) \wedge (a_1 \vee f_1 \vee k_1)$ \\
 & $E$ & $(c_0 \wedge f_0 \wedge k_0) \wedge (c_1 \vee f_1 \vee k_1)$ \\
\hline
$A,P$ & $J$ & $(c_0 \wedge g_0 \wedge k_0) \wedge (c_1 \vee g_1 \vee k_1)$ \\
\hline
$B,C$ & $A$ & $(a_0 \wedge e_0 \wedge k_0) \wedge (a_1 \vee e_1 \vee k_1)$ \\
\hline
$B,D$ & $A$ & $(a_0 \wedge e_0 \wedge k_0) \wedge (a_1 \vee e_1 \vee k_1)$ \\
 & $H$ & $(b_0 \wedge e_0 \wedge l_0) \wedge (b_1 \vee e_1 \vee l_1)$ \\
\hline
$B,G$ & $H$ & $(b_0 \wedge e_0 \wedge l_0) \wedge (b_1 \vee e_1 \vee l_1)$ \\
 & $A$ & $(a_0 \wedge e_0 \wedge k_0) \wedge (a_1 \vee e_1 \vee k_1)$ \\
 & $D$ & $(a_0 \wedge e_0 \wedge l_0) \wedge (a_1 \vee e_1 \vee l_1)$ \\
\hline
\end{longtable}
\end{center}

Python code which identifies Type II contradictions arising in all but 1124 of the $2^{17}$ remaining possible sets is provided in  Appendix~\ref{sec:contradiction-code-1}. Although the code is not written for efficiency purposes, it does identify the $1124$ sets which remain as possible alternation sets; these are provided in Appendix~\ref{sec:alt-sets-I}. 
    
We remark that Type I contradictions allowed us determine which elements of the Weyl group never appear in an alternation set. On the other hand, Type II contradictions did not eliminate any element of the Weyl group from being in an alternation set altogether. Instead, Type II contradictions allowed us to consider cases when two elements are in the same alternation set, while a third is not. From these assumptions we were able to establish some contradictions and reduce down to 1124 remaining subsets of the Weyl group which might arise as Weyl alternation sets. 
    
Next, we present the last set of contradictions, where we consider certain elements of the Weyl group appearing in the same alternation set. In those cases, we show that the existence of certain elements lying within the same alternation set generates new contradictions in the defining logical statements.   
    
The Type III contradictions are listed in Theorem~\ref{thm:contradictions}. Although the given list is not exhaustive of all remaining possible contradictions, the list is sufficient to identify all remaining forbidden alternation sets.

\begin{theorem}\label{thm:contradictions} For $m,n,k,x,y,z \in \N$, and $a_0, a_1, \ldots, r_0, r_1$ as defined previously, the following conditions can never hold true:
$(a_1 \wedge b_0)$,
$(a_1 \wedge c_0)$,
$(a_1\wedge g_0)$,
$(a_1\wedge h_0)$,
$(b_1 \wedge c_0)$,
$(b_1 \wedge h_0)$,
$(b_1 \wedge p_0)$,
$(c_1 \wedge p_0)$,
$(d_1 \wedge b_0)$,
$(d_1 \wedge c_0)$,
$(d_1 \wedge e_0)$,
$(d_1 \wedge f_0)$,
$(d_1 \wedge g_0)$,
$(d_1 \wedge h_0)$,
$(d_1 \wedge i_0)$,
$(d_1 \wedge l_0)$,
$(d_1 \wedge o_0)$,
$(d_1 \wedge p_0)$,
$(d_1 \wedge r_0)$,
$(e_1 \wedge c_0)$,
$(e_1 \wedge f_0)$,
$(e_1 \wedge g_0)$,
$(e_1 \wedge h_0)$,
$(e_1 \wedge i_0)$,
$(e_1 \wedge o_0)$,
$(e_1 \wedge p_0)$,
$(e_1 \wedge r_0)$,
$(f_1 \wedge c_0)$,
$(f_1 \wedge h_0)$,
$(f_1 \wedge p_0)$,
$(g_1 \wedge h_0)$,
$(g_1 \wedge i_0)$,
$(g_1 \wedge r_0)$,
$(i_1 \wedge r_0)$,
$(j_1 \wedge c_0)$,
$(j_1 \wedge h_0)$,
$(j_1 \wedge l_0)$,
$(j_1 \wedge o_0)$,
$(j_1 \wedge p_0)$,
$(j_1 \wedge r_0)$,
$(l_1 \wedge h_0)$,
$(l_1 \wedge o_0)$,
$(l_1 \wedge p_0)$,
$(l_1 \wedge r_0)$,
$(o_1 \wedge p_0)$,
$(o_1 \wedge r_0)$,
$(p_0 \wedge r_0)$,
$(b_1 \wedge f_0 \wedge h_0)$,
$(j_1 \wedge a_0 \wedge f_0)$,
$(l_1 \wedge b_0 \wedge g_0)$,
$(l_1 \wedge b_0 \wedge i_0)$,
$(l_1 \wedge f_0 \wedge g_0)$,
and
$(o_1 \wedge c_0 \wedge i_0)$.
\end{theorem}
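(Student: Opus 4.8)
The plan is to prove all of these by elementary manipulation of linear inequalities in the nonnegative integer variables $m,n,k,x,y,z$. The first step is to clear denominators in \eqref{eq:lowecase-eq}: the expressions $a,b,c,d,e,f,g,h,i$ are integers for every choice of $m,n,k,x,y,z\in\N$, whereas $j,l,o,p,r$ are half-integers, so I will work with $a,\dots,i$ together with $\bar\jmath=2j$, $\bar l=2l$, $\bar o=2o$, $\bar p=2p$, $\bar r=2r$. For a half-integer $v$ one has $v\ge 0\iff 2v\ge 0$ and $v<0\iff 2v\le -1$, while for an integer $w$ one has $w<0\iff w\le -1$; so after this normalization every atomic statement $v_0$ or $v_1$ becomes an integer linear inequality in $m,n,k,x,y,z$.

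The key observation is that each conjunction listed in the theorem admits a short \emph{certificate}: an identity, valid for all $m,n,k,x,y,z$, that exposes the contradiction directly. For a pair $(v_1\wedge w_0)$ the certificate has the shape
\[ \widetilde v \;=\; \alpha\,\widetilde w \;+\; L(m,n,k,x,y,z), \]
where $\widetilde v,\widetilde w$ denote the denominator-cleared versions of $v$ and $w$, the coefficient $\alpha\in\{1,2,3\}$ is a positive integer, and $L$ is a linear form with nonnegative integer coefficients and a \emph{positive} constant term. Indeed, then $w\ge 0$ gives $\widetilde w\ge 0$, so the right-hand side is at least the constant term of $L$ and hence strictly positive, while $v<0$ gives $\widetilde v<0$, a contradiction. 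Representative certificates are $a=b+(m+1)$ for $(a_1\wedge b_0)$; $d=e+(n+1)$ and $d=2b+(m+x+2)$ for $(d_1\wedge e_0)$ and $(d_1\wedge b_0)$; $b=\bar p+(m+n+y+2z+5)$ and $f=\bar p+(m+n+k+z+4)$ for $(b_1\wedge p_0)$ and $(f_1\wedge p_0)$; $\bar\jmath=3c+(m+2n+2x+y+6)$ for $(j_1\wedge c_0)$; and so on. The only conjunction among the pairs with no ``$<0$'' term is $(p_0\wedge r_0)$, and there $\bar p+\bar r=-(2x+4y+6z+12)$ is manifestly negative, so $p$ and $r$ cannot both be nonnegative.

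The triples are handled the same way with one extra term: the certificate takes the form $\beta\widetilde v=\alpha_1\widetilde{w_1}+\alpha_2\widetilde{w_2}+L$ with small positive integers $\beta,\alpha_1,\alpha_2$ and $L$ as above. Concretely, $\bar\jmath=a+f+(x+y+2)$ settles $(j_1\wedge a_0\wedge f_0)$; $\bar l=b+g+(x+y+2)$ settles $(l_1\wedge b_0\wedge g_0)$; $\bar l=b+i+(n+x+y+3)$ settles $(l_1\wedge b_0\wedge i_0)$; $2\bar l=f+2g+(n+x+2y+4)$ settles $(l_1\wedge f_0\wedge g_0)$; $\bar o=c+i+(x+y+2)$ settles $(o_1\wedge c_0\wedge i_0)$; and $b=h+(k+y+z+3)$ shows $(b_1\wedge h_0)$ is already impossible, hence so is $(b_1\wedge f_0\wedge h_0)$. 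In every case the ``$\ge 0$'' hypotheses force the right-hand side to be positive while the ``$<0$'' hypothesis forces the left-hand side to be negative.

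There is therefore no conceptual obstacle, and the only real work is bookkeeping: there are roughly fifty conjunctions, and one must exhibit a certificate for each. I would present the proof as a table whose rows pair each conjunction with its certifying identity, grouped by which letter carries the ``$<0$'' role (the $d_1$ block, the $e_1$ block, the $j_1$ block, the $l_1$ block, and so on), since within each block the same few substitutions recur; each identity is verified by direct expansion, and the search for the certificates can be --- and in the accompanying code is --- carried out mechanically.
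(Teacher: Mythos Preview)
Your approach is correct and is essentially the same as the paper's: both proofs verify each conjunction by exhibiting an explicit nonnegative integer linear combination of the ``$\ge 0$'' quantities that dominates the ``$<0$'' quantity by a strictly positive margin. Your presentation is somewhat cleaner than the paper's---you normalize denominators once, cast every case uniformly as a certificate identity $\widetilde v=\alpha\widetilde w+L$ (or $\beta\widetilde v=\alpha_1\widetilde{w_1}+\alpha_2\widetilde{w_2}+L$), and observe that $(b_1\wedge f_0\wedge h_0)$ is redundant given $(b_1\wedge h_0)$---whereas the paper simply works through all 53 cases individually with slightly varying phrasings; but the mathematical content is identical, and to constitute a complete proof you would still need to write out (or tabulate) the certificate for every conjunction, as the paper does.
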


\begin{proof} We prove each of these contradictions by manipulating inequalities until we reach a contradiction to our assumptions. We treat each contradiction as a case and prove each one separately.

\begin{itemize}
\item[Case 1:] Assume $a_1 \wedge b_0$.
Then, $n+k-x-y-z-1 \geq 0$ and $m+n+k-x-y-z < 0$, which implies $n+k-y-z-1 \geq x$ and $m+n+k-y-z < x$. So, we can say that $n+k-y-z-1 > m+n+k-y-z$, and we can simplify that statement to $-1 > m$, but we assumed $m \in \mathbb{N}$.\\

\item[Case 2:]Assume $a_1 \wedge c_0$. Then, $m+n+k-x-y-z<0$ and $k-x-y-z-2 \geq 0$, which implies $m+n+k-y-z<x$ and $k-y-z-2 \geq x$. So, we can say, $m+n+k-y-z < k-y-z-2$, and we can simplify that statement to $m+n < -2$, but we assumed $m,n \in \mathbb{N}$.\\
 
\item[Case 3:]Assume $a_1 \wedge g_0$. Then, $-m-n-k+x+y+z>0$, and $m+n-x-2y-2z-3 \geq 0$. Adding these inequalities, we get $-k-y-z-3>0$. This is equivalent to $-k-y-z>3$, but we assumed $k,y,z \in \mathbb{N}$.\\

\item[Case 4:]Assume $a_1 \wedge h_0$. Then, $-m-n-k+x+y+z>0$, and $n-x-2y-2z-4 \geq 0$. Adding these inequalities, we get $-m-k-y-z-4>0$. This is equivalent to $-m-k-y-z>4$, but we assumed $m,k,y,z \in \mathbb{N}$.\\
 
\item[Case 5:]Assume $b_1 \wedge c_0$. Then, $n+k-x-y-z-1<0$ and $k-x-y-z-2 \geq 0$, which implies $n+k-y-z-1<x$ and $k-y-z-2 \geq x$. So we can say that $n+k-y-z-1 < k-y-z-2$, and we can simplify that statement to $n < -1$, but we assumed $n \in \mathbb{N}$.\\

\item[Case 6:]Assume $b_1 \wedge h_0$. Then, $-n-k+x+y+z+1>0$, and $n-x-2y-2z-4 \geq 0$. Adding these inequalities, we get $-k-y-z-3>0$. This is equivalent to $-k-y-z>3$, but we assumed $k,y,z \in \mathbb{N}$.\\

\item[Case 7:]Assume $b_1 \wedge p_0$. Then, $-n-k+x+y+z+1>0$, and $-m+k-x-2y-3z-6 \geq 0$. Adding these inequalities, we get $-m-n-y-2z-5>0$. This is equivalent to $-m-n-y-2z>5$, but we assumed $m,n,y,z \in \mathbb{N}$.\\

\item[Case 8:]Assume $c_1 \wedge p_0$. Then, $-k + x + y + z + 2 > 0$ and $-m + k - x - 2y - 3z - 6 \geq 0$. Adding these inequalities, we get $-m - y - 2z - 4 > 0$, but we assumed $m, y, z \in \mathbb{N}$.\\

\item[Case 9:]Assume $d_1 \wedge b_0$. Then, $-m-2n-2k+x+2y+2z>0$, and $2(n+k-x-y-z-1) \geq 0$. Adding these inequalities, we get $-m-x-2>0$, but we assumed $m,x \in \mathbb{N}$.\\

\item[Case 10:]Assume $d_1 \wedge c_0$. Then, $-m-2n-2k+x+2y+2z>0$, and $2(k-x-y-z-2) \geq 0$. Adding these inequalities, we get $-m-2n-x-4>0$, but we assumed $m,n,x \in \mathbb{N}$.\\

\item[Case 11:]Assume $d_1 \wedge e_0$. Then, $m+2n+2k-x-2y-2z < 0$ and $m+n+2k-x-2y-2z-1 \geq 0$, which implies $m+2n+2k-2y-2z < x$ and $m+n+2k-2y-2z-1 \geq x$. So, we can say, $m+2n+2k-2y-2z < m+n+2k-2y-2z-1$, and we can simplify that statement to $n < -1$, but we assumed $n \in \mathbb{N}$.\\

\item[Case 12:]Assume $d_1 \wedge f_0$. Then, $m+2n+2k-x-2y-2z < 0$ and $n+2k-x-2y-2z-2 \geq 0$. So, we can say, $m+2n+2k-x-2y-2z < n+2k-x-2y-2z-2$, and we can simplify that statement to $m+n < -2$, but we assumed $m,n \in \mathbb{N}$.\\

\item[Case 13:]Assume $d_1 \wedge g_0$. Then, $m+2n+2k-x-2y-2z < 0$ and $m+n-x-2y-2z-3 \geq 0$. So, we can say, $m+2n+2k-x-2y-2z < m+n-x-2y-2z-3$, and we can simplify that statement to $n+2k < -3$, but we assumed $n,k \in \mathbb{N}$.\\

\item[Case 14:]Assume $d_1 \wedge h_0$. Then, $m+2n+2k-x-2y-2z < 0$ and $n-x-2y-2z-4 \geq 0$. So, we can say, $m+2n+2k-x-2y-2z < n-x-2y-2z-4$, and we can simplify that statement to $m+n+2k<-4$, but we assumed $m,n,k \in \mathbb{N}$.\\

\item[Case 15:]Assume $d_1 \wedge i_0$. Then, $m+2n+2k-x-2y-2z<0$ and $m-x-2y-2z-4 \geq 0$. So, we can say, $m+2n+2k-x-2y-2z < m-x-2y-2z-4$, and we can simplify that statement to $2n + 2k < -4$, but we assumed $n,k \in \mathbb{N}$.\\

\item[Case 16:]Assume $d_1 \wedge l_0$. Then, $-m-2n-2k+x+2y+2z>0$, and $m+2n+k-x-2y-3z-2 \geq 0$. Adding these inequalities, we get $-k-z-2>0$, but we assumed $k,z \in \mathbb{N}$.\\

\item[Case 17:]Assume $d_1 \wedge o_0$. Then, $-m-2n-2k+x+2y+2z>0$, and $m+k-x-2y-3z-4 \geq 0$. Adding these inequalities, we get $-2n-k-z-4>0$, but we assumed $n,k,z \in \mathbb{N}$.\\

\item[Case 18:]Assume $d_1 \wedge p_0$. Then, $-m-2n-2k+x+2y+2z>0$, and $-m+k-x-2y-3z-6 \geq 0$. Adding these inequalities, we get $-2m-2n-k-z-6>0$, but we assumed $m,n,k,z \in \mathbb{N}$.\\

\item[Case 19:]Assume $d_1 \wedge r_0$. Then, $-m -2n -2k + x + 2y + 2z>0$, and $m-k-x-2y-3z-6 \geq 0$. Adding these inequalities, we get $-2n-3k-z-6>0$, but we assumed $n,k,z \in \mathbb{N}$.\\

\item[Case 20:]Assume $e_1 \wedge c_0$. Then, $-m-n-2k+x+2y+2z+1>0$, and $2(k-x-y-z-2) \geq 0$. Adding these inequalities, we get $-m-n-x-3>0$, but we assumed $m,n,x \in \mathbb{N}$.\\

\item[Case 21:]Assume $e_1 \wedge f_0$. Then, $m+n+2k-x-2y-2z-1<0$ and $n+2k-x-2y-2z-2 \geq 0$, which implies $m+n+2k-2y-2z-1<x$ and $n+2k-2y-2z-2 \geq x$. So, we can say, $m+n+2k-2y-2z-1 < n+2k-2y-2z-2$, and we can simplify that statement to $m<-1$, but we assumed $m \in \mathbb{N}$.\\

\item[Case 22:]Assume $e_1 \wedge g_0$. Then, $m+n+2k-x-2y-2z-1<0$ and $m+n-x-2y-2z-3 \geq 0$, which implies $m+n+2k-2y-2z-1<x$ and $m+n-2y-2z-3 \geq x$. So, we can say, $m+n+2k-2y-2z-1 < m+n-2y-2z-3$, and we can simplify that statement to $k<-1$, but we assumed $k \in \mathbb{N}$.\\

\item[Case 23:]Assume $e_1 \wedge h_0$. Then, $m+n+2k-x-2y-2z-1 < 0$ and $n-x-2y-2z-4 \geq 0$. So, we can say, $m+n+2k-x-2y-2z-1 < n-x-2y-2z-4$, and we can simplify that statement to $m+2k < -3$, but we assumed $m,k \in \mathbb{N}$.\\

\item[Case 24:]Assume $e_1 \wedge i_0$. Then, $m+n+2k-x-2y-2z-1 < 0$ and $m-x-2y-2z-4 \geq 0$. So, we can say, $m+n+2k-x-2y-2z-1 < m-x-2y-2z-4$, and we can simplify that statement to $n+2k < -3$, but we assumed $n,k \in \mathbb{N}$.\\

\item[Case 25:]Assume $e_1 \wedge o_0$. Then, $-m-n-2k+x+2y+2z+1>0$, and $m+k-x-2y-3z-4 \geq 0$. Adding these inequalities, we get $-n-k-z-3>0$, but we assumed $n,k,z \in \mathbb{N}$.\\

\item[Case 26:]Assume $e_1 \wedge p_0$. Then, $-m-n-2k+x+2y+2z+1>0$, and $-m+k-x-2y-3z-6 \geq 0$. Adding these inequalities, we get $-2m-n-k-z-5>0$, but we assumed $m,n,k,z \in \mathbb{N}$.\\

\item[Case 27:]Assume $e_1 \wedge r_0$. Then, $-m-n-2k+x+2y+2z+1>0$, and $m-k-x-2y-3z-6 \geq 0$. Adding these inequalities, we get $-n-3k-z-5>0$, but we assumed $n,k,z \in \mathbb{N}$.\\

\item[Case 28:]Assume $f_1 \wedge c_0$. Then, $-n-2k+x+2y+2z+2>0$, and $2(k-x-y-z-2) \geq 0$. Adding these inequalities, we get $-n-x-2>0$, but we assumed $n,x \in \mathbb{N}$.\\

\item[Case 29:]Assume $f_1 \wedge h_0$. Then, $n+2k-x-2y-2z-2<0$ and $n-x-2y-2z-4 \geq 0$, which implies $n+2k-2y-2z-2<x$ and $n-2y-2z-4 \geq x$. So, we can say, $n+2k-2y-2z-2 < n-2y-2z-4$, and we can simplify that statement to $k<-1$, but we assumed $k \in \mathbb{N}$.\\

\item[Case 30:]Assume $f_1 \wedge p_0$. Then, $-n-2k+x+2y+2z+2>0$, and $-m+k-x-2y-3z-6 \geq 0$. Adding these inequalities, we get $-m-n-k-z-4>0$, but we assumed $m,n,k,z \in \mathbb{N}$.\\

\item[Case 31:]Assume $g_1 \wedge h_0$. Then, $m+n-x-2y-2z-3<0$ and $n-x-2y-2z-4 \geq 0$, which implies $m+n-2y-2z-3<x$ and $n-2y-2z-4 \geq x$. So, we can say, $m+n-2y-2z-3 < n-2y-2z-4$, and we can simplify that statement to $m<-1$, but we assumed $m \in \mathbb{N}$.\\

\item[Case 32:]Assume $g_1 \wedge i_0$. Then, $m+n-x-2y-2z-3<0$ and $m-x-2y-2z-4 \geq 0$, which implies $m+n-2y-2z-3<x$ and $m-2y-2z-4 \geq x$. So, we can say, $m+n-2y-2z-3 < m-2y-2z-4$, and we can simplify that statement to $n<-1$, but we assumed $n \in \mathbb{N}$.\\

\item[Case 33:]Assume $g_1 \wedge r_0$. Then, $-m-n+x+2y+2z+3>0$, and $m-k-x-2y-3z-6 \geq 0$. Adding these inequalities, we get $-n-k-z-3>0$, but we assumed $n,z \in \mathbb{N}$.\\ 

\item[Case 34:]Assume $i_1 \wedge r_0$. Then, $-m + x + 2y + 2z + 4 > 0$ and $m - k - x - 2y - 3z - 6 \geq 0$. Adding these inequalities gives $-k - z - 2 > 0$, but we assumed that $k, z \in \mathbb{N}$.\\

\item[Case 35:]Assume $j_1 \wedge c_0$. Then, $-m-2n-3k+x+2y+3z>0$, and $3(k-x-y-z-2) \geq 0$. Adding these inequalities, we get $-m-2n-2x-y-6>0$, but we assumed $m,n,x,y \in \mathbb{N}$.\\

\item[Case 36:]Assume $j_1 \wedge h_0$. Then, $-m-2n-3k+x+2y+3z>0$ and  $2(n-x-2y-2z-4)\geq 0$. Adding these inequalities, we get $-m-3k-x-2y-z-8>0$, but we assumed that $m,k,x,y,z \in \mathbb{N}$.\\

\item[Case 37:]Assume $j_1 \wedge l_0$. Then, $-\frac{m}{2}-n-\frac{3}{2}k+\frac{x}{2}+y+\frac{3}{2}z > 0$ and $\frac{m}{2}+n+\frac{k}{2}-\frac{x}{2}-y-\frac{3}{2}z - 1 \geq 0$. Adding these inequalities $-k-1>0,$ but we assumed that $k\in\N.$\\

\item[Case 38:]Assume $j_1 \wedge o_0$. Then, $\frac{m}{2}+n+\frac{3}{2}k-\frac{x}{2}-y-\frac{3}{2}z < 0$ and $\frac{m}{2}+\frac{k}{2}-\frac{x}{2}-y-\frac{3}{2}z-2 \geq 0$. So, we can say, $\frac{m}{2}+n+\frac{3}{2}k-\frac{x}{2}-y-\frac{3}{2}z < \frac{m}{2}+\frac{k}{2}-\frac{x}{2}-y-\frac{3}{2}z-2$, and we can simplify that statement to $n+k<-2$, but we assumed $n,k \in \mathbb{N}$.\\

\item[Case 39:]Assume $j_1 \wedge p_0$. Then, $\frac{m}{2}+n+\frac{3}{2}k-\frac{x}{2}-y-\frac{3}{2}z < 0$ and $-\frac{m}{2}+\frac{k}{2}-\frac{x}{2}-y-\frac{3}{2}z-3 \geq 0$. So, we can say, $\frac{m}{2}+n+\frac{3}{2}k-\frac{x}{2}-y-\frac{3}{2}z < -\frac{m}{2}+\frac{k}{2}-\frac{x}{2}-y-\frac{3}{2}z-3$, and we can simplify that statement to $m+n+k < -3$, but we assumed $m,n,k \in \mathbb{N}$.\\

\item[Case 40:]Assume $j_1 \wedge r_0$. Then, $-\frac{m}{2}-n-\frac{3}{2}k+\frac{x}{2}+y+\frac{3}{2}z > 0$ and $\frac{m}{2}-\frac{k}{2}-\frac{x}{2}-y-\frac{3}{2}z-3 \geq 0$. Adding these inequalities gives $n+k<-6,$ but we assumed $n,k \in \mathbb{N}$.\\

\item[Case 41:]Assume $l_1 \wedge h_0$. Then, $- m - 2n - k + x + 2y + 3z + 2>0$ and $ 2(n-x-2y-2z-4) \geq 0$. By adding these inequalities, we get $-m-k-x-2y-z-6> 0.$ This is equivalent to $-m-k-x-2y-z> 6,$ but we assumed $m,k,x,y,z \in \mathbb{N}$.\\

\item[Case 42:]Assume $l_1 \wedge o_0$. Then, $\frac{m}{2}+n+\frac{k}{2}-\frac{x}{2}-y-\frac{3}{2}z-1<0$ and $\frac{m}{2}+\frac{k}{2}-\frac{x}{2}-y-\frac{3}{2}z-2 \geq 0$, which implies $\frac{m}{2}+n+\frac{k}{2}-\frac{x}{2}-\frac{3}{2}z-1<y$ and $\frac{m}{2}+\frac{k}{2}-\frac{x}{2}-\frac{3}{2}z-2 \geq y$. So, we can say, $\frac{m}{2}+n+\frac{k}{2}-\frac{x}{2}-\frac{3}{2}z-1 < \frac{m}{2}+\frac{k}{2}-\frac{x}{2}-\frac{3}{2}z-2$, and we can simplify that statement to $n < -1$, but we assumed $n \in \mathbb{N}$.\\

\item[Case 43:]Assume $l_1 \wedge p_0$. Then, $- m - 2n - k + x + 2y + 3z + 2 > 0 $ and $- m + k -x -2y -3z -6\geq 0$. By adding these inequalities, we get $-2m-2n-4>0$. This is equivalent to $-2m-2n> 4,$ but we assumed $m,n \in \mathbb{N}$.\\

\item[Case 44:]Assume $l_1 \wedge r_0$. Then, $\frac{m}{2}+n+\frac{k}{2}-\frac{x}{2}-y-\frac{3}{2}z-1 < 0$ and $\frac{m}{2}-\frac{k}{2}-\frac{x}{2}-y-\frac{3}{2}z-3 \geq 0$, which implies $\frac{m}{2}+n+\frac{k}{2}-\frac{x}{2}-\frac{3}{2}z-1 < y$ and $\frac{m}{2}-\frac{k}{2}-\frac{x}{2}-\frac{3}{2}z-3 \geq y$. So, we can say, $\frac{m}{2}+n+\frac{k}{2}-\frac{x}{2}-\frac{3}{2}z-1 < \frac{m}{2}-\frac{k}{2}-\frac{x}{2}-\frac{3}{2}z-3$, and we can simplify that statement to $n+k < -2$, but we assumed $n,k \in \mathbb{N}$. \\

\item[Case 45:]Assume $o_1 \wedge p_0$. Then, $\frac{m}{2}+\frac{k}{2}-\frac{x}{2}-y-\frac{3}{2}z-2 < 0$ and $-\frac{m}{2}+\frac{k}{2}-\frac{x}{2}-y-\frac{3}{2}z-3 \geq 0$, which implies $\frac{m}{2}+\frac{k}{2}-\frac{x}{2}-\frac{3}{2}z-2 < y$ and $-\frac{m}{2}+\frac{k}{2}-\frac{x}{2}-\frac{3}{2}z-3 \geq y$. So, we can say, $\frac{m}{2}+\frac{k}{2}-\frac{x}{2}-\frac{3}{2}z-2 < -\frac{m}{2}+\frac{k}{2}-\frac{x}{2}-\frac{3}{2}z-3$, and we can simplify that statement to $m < -1$, but we assumed $m \in \mathbb{N}$.\\

\item[Case 46:]Assume $o_1 \wedge r_0$. Then, $\frac{m}{2}+\frac{k}{2}-\frac{x}{2}-y-\frac{3}{2}z-2 < 0$ and $\frac{m}{2}-\frac{k}{2}-\frac{x}{2}-y-\frac{3}{2}z-3 \geq 0$, which implies $\frac{m}{2}+\frac{k}{2}-\frac{x}{2}-\frac{3}{2}z-2 < y$ and $\frac{m}{2}-\frac{k}{2}-\frac{x}{2}-\frac{3}{2}z-3 \geq y$. So, we can say, $\frac{m}{2}+\frac{k}{2}-\frac{x}{2}-\frac{3}{2}z-2 < \frac{m}{2}-\frac{k}{2}-\frac{x}{2}-\frac{3}{2}z-3$, and we can simplify that statement to $k<-1$, but we assumed $k \in \mathbb{N}$.\\

\item[Case 47:]Assume $p_0 \wedge r_0$. Then, $-\frac{m}{2} + \frac{k}{2} - \frac{x}{2} - y - \frac{3}{2}z - 3 \geq 0$ and $\frac{m}{2} - \frac{k}{2} - \frac{x}{2} - y - \frac{3}{2}z - 3 \geq 0$. Adding these inequalities gives $-x - 2y - 3z - 3 \geq 0$, but we assumed that $x, y, z \in \mathbb{N}$.\\

\item[Case 48:]Assume $b_1 \wedge f_0 \wedge h_0$. Then, $2(-n -k + x +y +z +1)>0$, $n+2k-x-2y-2z-2\geq0$, and $n-x-2y-2z-4\geq 0.$ Adding these inequalities, we get $-2y-2z-4 > 0$. This is equivalent to $-2y-2z>4,$ but we assumed that $y,z\in \N$.\\

\item[Case 49:]Assume $j_1 \wedge a_0 \wedge f_0$. Then, $-n - 2k + x + 2y + 2z + 2 \leq 0$. Also, $m + 2n + 3k - x - 2y - 3z < 0$. By adding these inequalities, we get $m + n + k - z + 2 < 0$, or more simply, $m + n + k < z$. However, from $a_0$ we get that $m + n + k - x - y - z > 0$, implying $m + n + k \geq z$, a contradiction.\\

\item[Case 50:]Assume $l_1 \wedge b_0 \wedge g_0$. Then, $m + n - x - 2y - 2z - 3 \geq 0$ and $-m - 2n - k + x + 2y + 3z + 1 > 0$. Adding these inequalities, we get $-n-k+z-2 > 0$, or more simply, $z > n + k + 2$. However, from $b_0$ we get that $n + k - x - y - z - 1 \geq 0$, implying $n + k \geq x + y + z + 1 > z$. This implies $z > n + k + 2 > z$.\\

\item[Case 51:]Assume $l_1 \wedge b_0 \wedge i_0$. Then, $- m - 2n - k + x + 2y + 3z + 2 > 0$, $ n + k - x - y - z - 1\geq 0$, and $m - x -  2y - 2z -4\geq 0.$ By adding these inequalities, we get $-n-x-y-3 > 0,$ but we assumed that $n,x,y\in \N$.\\

\item[Case 52:]Assume $l_1 \wedge f_0 \wedge g_0$. Then, $-n - 2k + x + 2y + 2z + 2 \leq 0$. Also, $m + 2n + 3k - x - 2y - 3z < 0$. Adding these inequalities, we get $m + n + k - z + 2 < 0$, or more simply, $m + n + k < z$. However, from $a_0$ we get that $m + n + k - x - y - z > 0$, implying $m + n + k \geq z$.\\

\item[Case 53:]Assume $o_1 \wedge c_0 \wedge i_0$. Then, $m - x  - 2y - 2z - 4 \geq 0$ and $-m-k+x+2y+3z+4 > 0$. Adding these inequalities, we get $z > k$. However, from $c_0$, we get that $k \geq x + y + z + 2 \geq z$. This implies $z > k \geq z$.\qedhere
\end{itemize}
\end{proof}

We provide Python code which checks each of our $1124$ possible remaining subsets and discards any that has any contradiction of Type III, as delineated in Theorem~\ref{thm:contradictions}.  This code is included in Appendix~\ref{sec:contradiction-code-2}.
Taking these new contradictions into account, we are able to reduce down to only $150$ subsets of the Weyl group that can be alternation sets for some particular dominant integral weights $\lambda$ and $\mu$. These are listed in Appendix~\ref{sec:alt-sets-II}. 

Note, however, that contradictions of Type II and Type III do not exist in isolation. Take for example the case where $m(\lambda,\mu) = -D$, that is, the case where $D$ contributes nontrivially while $A$, $B$, $C$, $E$, $F$, $G$, $H$, $I$, $J$, $K$, $L$, $M$, $N$, $O$, $P$, and $Q$ contribute trivially. Then, the following must necessarily be true:

\begin{center}
    \((a_1 \vee d_1 \vee j_1) \wedge (b_1 \vee d_1 \vee j_1) \wedge (a_1 \vee e_1 \vee j_1) \wedge (a_0 \wedge d_0 \wedge l_0) \wedge (c_1 \vee e_1 \vee j_1) \wedge \)\\
    \((b_1 \vee f_1 \vee j_1) \wedge (a_1 \vee g_1 \vee l_1) \wedge (b_1 \vee d_1 \vee l_1) \wedge (a_1 \vee e_1 \vee o_1) \wedge (c_1 \vee f_1 \vee j_1) \wedge\)\\
    \((b_1 \vee h_1 \vee l_1) \wedge (a_1 \vee i_1 \vee o_1) \wedge (b_1 \vee f_1 \vee p_1) \wedge (c_1 \vee e_1 \vee o_1) \wedge (a_1 \vee g_1 \vee r_1) \wedge\)\\
    \((c_1 \vee f_1 \vee p_1) \wedge (a_1 \vee i_1 \vee r_1).\)\\
\end{center}
If we look through this logical statement for contradictions of Type II or Type III, we would not find a contradiction. However, $(l_0 \wedge j_1)$ is not possible. So, $(a_1 \vee d_1 \vee j_1) \wedge (a_0 \wedge d_0 \wedge l_0)$, which is within the logical statement above, is impossible. 
We again wrote a Python program which checks for both contradictions of Type II and Type III simultaneously (see Appendix \ref{sec:contradiction-code-1-and-2}). Using this, we further reduce to the $46$ sets shown in Appendix~\ref{sec:alt-sets-final}. In the next section, we establish that these are in fact the Weyl alternation sets of $\ccc$, which is the content of Theorem \ref{thm:alternationSetsFinal}.

\section{Weyl alternation sets}\label{sec: alt sets}

Now that we have discarded the forbidden alternation sets, it is left to prove that the $46$ sets in Appendix~\ref{sec:alt-sets-final} arise as alternation sets for a pair of dominant weights $\lambda$ and $\mu$. To do so, for each of the sets $S$ listed,
it is necessary to find a pair of dominant weights $\lambda$ and $\mu$ which induce each of these sets as alternation sets, i.e.,~such that $\mathcal{A}(\lambda,\mu)=S$. This establishes the existence of these sets as Weyl alternation sets. 

In Table~\ref{table:alt-set-examples}, we present pairs $(\lambda,\mu)$ which induce each of the $46$ alternation sets. These were found using a Sage program in which we vary the coefficients $m,n,k,x,y,z\in\mathbb{N}$ defining the weights $\lambda=m\w_1+n\w_2+k\w_3$ and $\mu=x\w_1+y\w_2+z\w_3$, and which computes the corresponding Weyl alternation set $\A(\lambda,\mu)$. This program is presented in Appendix~\ref{sec:calculating-alt-sets}. Note that when we write $(a,b,c)$ under the columns of $\lambda$ and $\mu$ this denotes the coefficients of these weights when they are expressed as a sum of fundamental weights. 

\begin{center}
\begin{longtable}[c]{| p{4.5in} | p{0.5in} | p{0.5in} | }
\caption{Examples of alternation sets with their given weights.} \label{table:alt-set-examples}\\
\hline
Alternation set $\mathcal{A}(\lambda,\mu)$& $\lambda$ & $\mu$ \\ \hline
\hline
\endfirsthead
\hline
Alternation set $\mathcal{A}(\lambda,\mu)$& $\lambda$ & $\mu$\\ \hline
\hline
\endhead
\hline
\endfoot

$\{\}$ & $(0, 0, 0)$ & $(0, 0, 2)$\\
\hline
$\{1\}$ 
& $(0, 0, 0)$ & $(0, 0, 0)$\\
\hline
$\{1$, $ s_1\}$ & $(0, 2, 2)$ & $(0, 1, 2)$\\
\hline
$\{1$, $ s_2\}$ & $(0, 0, 2)$ & $(1, 0, 1)$\\
\hline
$\{1$, $ s_3\}$ & $(0, 2, 0)$ & $(2, 0, 0)$\\
\hline
$\{1$, $ s_1$, $ s_2\}$ & $(0, 1, 2)$ & $(0, 0, 2)$\\
\hline
$\{1$, $s_2$, $s_3\}$ & $(0, 2, 2)$ & $(4, 0, 0)$\\
\hline
$\{1$, $ s_1$, $s_2$, $ s_2s_1\}$ & $(0, 0, 2)$ & $(0, 1, 0)$\\
\hline
$\{1$, $ s_1$, $ s_3$, $ s_3s_1\}$ & $(0, 2, 0)$ & $(0, 1, 0)$\\
\hline
$\{1$, $s_2$, $s_3$, $s_2s_3\}$ & $(4, 1, 0)$ & $(0, 1, 0)$\\
\hline
$\{1$, $s_2$, $s_3$, $ s_3s_2\}$ & $(4, 0, 2)$ & $(2, 0, 0)$\\
\hline
$\{1$, $s_1$, $s_2$, $s_3$, $ s_3s_1\}$ & $(0, 1, 0)$ & $(0, 0, 0)$\\
\hline
$\{1$, $ s_2$, $ s_3$, $ s_2s_3$, $s_3s_2\}$ & $(4, 0, 1)$ & $(1, 0, 0)$\\
\hline
$\{1$, $ s_1$, $ s_2$, $s_3$, $ s_2s_1$, $ s_3s_1\}$ & $(0, 1, 2)$ & $(0, 1, 0)$\\
\hline
$\{1$, $s_1$, $s_2$, $s_3$, $ s_2s_3$, $ s_3s_1\}$ & $(2, 1, 0)$ & $(0, 0, 0)$\\
\hline
$\{1$, $ s_1$, $ s_2$, $ s_3$, $ s_3s_1$, $ s_3s_2\}$ & $(7, 1, 3)$ & $(0, 3, 0)$\\
\hline
$\{1$, $ s_1$, $ s_2$, $ s_1s_2$, $ s_2s_1$ $ s_1s_2s_1\}$ & $(0, 0, 4)$ & $(0, 0, 2)$\\
\hline
$\{1$, $ s_2$, $ s_3$, $ s_2s_3$, $ s_3s_2$, $s_2s_3s_2\}$ & $(4, 0, 0)$ & $(0, 0, 0)$\\
\hline
$\{1$, $s_1$, $s_2$, $s_3$, $s_2s_1$, $ s_2s_3$, $ s_3s_1\}$ & $(0, 3, 0)$ & $(0, 0, 0)$\\
\hline
$\{1$, $ s_1$, $ s_2$, $s_3$, $ s_2s_1$, $ s_3s_1$, $ s_3s_2\}$ & $(3, 0, 2)$ & $(1, 0, 0)$\\
\hline
$\{1$, $ s_1$, $ s_2$, $ s_3$, $ s_2s_3$, $ s_3s_1$, $ s_3s_2\}$ & $(5, 1, 1)$ & $(0, 1, 0)$\\
\hline
$\{1$, $s_1$, $s_2$, $s_3$, $ s_1s_2$, $ s_2s_1$, $ s_3s_1$, $ s_1s_2s_1\}$ & $(0, 0, 2)$ & $(0, 0, 0)$\\
\hline
$\{1$, $ s_1$, $ s_2$, $s_3$, $ s_2s_1$, $ s_2s_3$, $ s_3s_1$, $ s_3s_2\}$ & $(3, 0, 1)$ & $(0, 0, 0)$\\
\hline
$\{1$, $ s_1$, $ s_2$, $ s_3$, $ s_2s_1$, $ s_2s_3$, $ s_3s_1$, $ s_2s_3s_1\}$ & $(0, 4, 0)$ & $(0, 0, 0)$\\
\hline
$\{1$, $ s_1$, $ s_2$, $ s_3$, $ s_2s_3$, $ s_3s_1$, $ s_3s_2$, $s_2s_3s_2\}$ & $(4, 1, 0)$ & $(0, 0, 0)$\\
\hline
$\{1$, $ s_2$, $s_3$, $ s_2s_3$, $ s_3s_2$, $s_2s_3s_2$, $ s_3s_2s_3$, $ s_3s_2s_3s_2\}$ & $(6, 0, 0)$ & $(0, 0, 0)$\\
\hline
$\{1$, $ s_1$, $ s_2$, $s_3$, $ s_1s_2$, $ s_2s_1$, $ s_2s_3$, $ s_3s_1$, $ s_1s_2s_1\}$ & $(0, 3, 2)$ & $(0, 0, 0)$\\
\hline
$\{1$, $ s_1$, $ s_2$, $s_3$, $ s_2s_1$, $ s_2s_3$, $ s_3s_1$, $ s_3s_2$, $ s_2s_3s_1\}$ & $(3, 4, 1)$ & $(0, 0, 0)$\\
\hline
$\{1$, $ s_1$, $ s_2$, $s_3$, $ s_2s_1$, $ s_2s_3$, $ s_3s_1$, $ s_3s_2$, $s_2s_3s_2\}$  & $(4, 2, 0)$ & $(0, 0, 0)$\\
\hline
$\{1$, $ s_1$, $ s_2$, $s_3$, $ s_1s_2$, $ s_2s_1$, $ s_2s_3$, $ s_3s_1$, $ s_1s_2s_1$, $ s_2s_3s_1\}$ & $(0, 4, 2)$ & $(0, 0, 0)$\\
\hline
$\{1$, $ s_1$, $ s_2$, $s_3$, $ s_1s_2$, $ s_2s_1$, $ s_3s_1$, $ s_3s_2$, $ s_1s_2s_1$, $ s_3s_1s_2\}$ & $(0, 0, 4)$ & $(0, 0, 0)$\\
\hline
$\{1$, $ s_1$, $ s_2$, $s_3$, $ s_2s_1$, $ s_2s_3$, $ s_3s_1$, $ s_3s_2$, $ s_2s_3s_1$, $s_2s_3s_2\}$ & $(4, 4, 0)$ & $(0, 0, 0)$\\
\hline
$\{1$, $ s_1$, $ s_2$, $s_3$, $ s_2s_3$, $ s_3s_1$, $ s_3s_2$, $s_2s_3s_2$, $ s_3s_2s_3$, $ s_3s_2s_3s_2\}$ & $(6, 1, 0)$ & $(0, 0, 0)$\\
\hline
$\{1$, $ s_1$, $ s_2$, $s_3$, $ s_1s_2$, $ s_2s_1$, $ s_2s_3$, $ s_3s_1$, $ s_3s_2$, $ s_1s_2s_1$, $ s_3s_1s_2\}$ & $(0, 3, 4)$ & $(0, 0, 0)$\\
\hline
$\{1$, $ s_1$, $ s_2$, $s_3$, $ s_2s_1$, $ s_2s_3$, $ s_3s_1$, $ s_3s_2$, $s_2s_3s_2$, $ s_3s_2s_3$, $ s_3s_2s_3s_2\}$ & $(6, 2, 0)$ & $(0, 0, 0)$\\
\hline
$\{1$, $ s_1$, $ s_2$, $s_3$, $ s_1s_2$, $ s_2s_1$, $ s_2s_3$, $ s_3s_1$, $ s_3s_2$, $ s_1s_2s_1$, $ s_2s_3s_1$, $ s_3s_1s_2\}$ & $(0, 4, 4)$ & $(0, 0, 0)$\\
\hline
$\{1$, $ s_1$, $ s_2$, $s_3$, $ s_1s_2$, $ s_2s_1$, $ s_2s_3$, $ s_3s_1$, $ s_3s_2$, $ s_1s_2s_1$, $s_2s_3s_2$, $ s_3s_1s_2\}$ & $(4, 0, 2)$ & $(0, 0, 0)$\\
\hline
$\{1$, $ s_1$, $ s_2$, $s_3$, $s_1s_2$, $ s_2s_1$, $s_2s_3$, $ s_3s_1$, $ s_3s_2$, $ s_1s_2s_1$, $ s_3s_1s_2$, $ s_3s_2s_1$, $ s_3s_1s_2s_1\}$  & $(0, 3, 6)$ & $(0, 0, 0)$\\
\hline
$\{1$, $ s_1$, $ s_2$, $s_3$, $ s_1s_2$, $ s_2s_1$, $ s_3s_1$, $ s_3s_2$, $ s_1s_2s_1$, $ s_3s_1s_2$, $ s_3s_2s_1$, $ s_3s_1s_2s_1\}$ & $(0, 0, 6)$ & $(0, 0, 0)$\\
\hline
$\{1$, $ s_1$, $ s_2$, $s_3$, $ s_2s_1$, $ s_2s_3$, $ s_3s_1$, $ s_3s_2$, $ s_2s_3s_1$, $s_2s_3s_2$, $ s_3s_2s_3$, $ s_3s_2s_3s_2\}$ & $(6, 4, 0)$ & $(0, 0, 0)$\\
\hline
$\{1$, $ s_1$, $ s_2$, $s_3$, $ s_1s_2$, $ s_2s_1$, $ s_2s_3$, $ s_3s_1$, $ s_3s_2$, $s_1s_2s_1$, $ s_2s_3s_1$, $s_2s_3s_2$, $ s_3s_1s_2\}$ & $(4, 4, 2)$ & $(0, 0, 0)$\\
\hline
$\{1$, $ s_1$, $ s_2$, $s_3$, $ s_1s_2$, $ s_2s_1$, $ s_2s_3$, $ s_3s_1$, $ s_3s_2$, $ s_1s_2s_1$, $ s_2s_3s_1$, $ s_3s_1s_2$, $ s_3s_2s_1$, $ s_3s_1s_2s_1\}$ & $(0, 4, 6)$ & $(0, 0, 0)$\\
\hline
$\{1$, $ s_1$, $ s_2$, $s_3$, $ s_1s_2$, $ s_2s_1$, $ s_2s_3$, $ s_3s_1$, $s_3s_2$, $ s_1s_2s_1$, $s_2s_3s_2$, $ s_3s_1s_2$, $ s_3s_2s_1$, $ s_3s_1s_2s_1\}$ & $(4, 0, 10)$ & $(0, 0, 0)$\\
\hline
$\{1$, $ s_1$, $ s_2$, $s_3$, $ s_1s_2$, $ s_2s_1$, $ s_2s_3$, $ s_3s_1$, $ s_3s_2$, $ s_1s_2s_1$, $s_2s_3s_2$, $ s_3s_1s_2$, $ s_3s_2s_3$, $ s_3s_2s_3s_2\}$ & $(8, 0, 2)$ & $(0, 0, 0)$\\
\hline
$\{1$, $ s_1$, $ s_2$, $s_3$, $ s_1s_2$, $ s_2s_1$, $ s_2s_3$, $ s_3s_1$, $ s_3s_2$, $ s_1s_2s_1$, $ s_2s_3s_1$, $s_2s_3s_2$, $ s_3s_1s_2$,  $ s_3s_2s_1$, $ s_3s_1s_2s_1\}$ & $(4, 4, 10)$ & $(0, 0, 0)$\\
\hline
$\{1$, $ s_1$, $ s_2$, $s_3$, $ s_1s_2$, $ s_2s_1$, $ s_2s_3$, $ s_3s_1$, $s_3s_2$, $ s_1s_2s_1$, $ s_2s_3s_1$, $s_2s_3s_2$, $ s_3s_1s_2$, $ s_3s_2s_3$, $ s_3s_2s_3s_2\}$ & $(8, 4, 2)$ & $(0, 0, 0)$\\

\end{longtable}
\end{center}

Given this characterization of the Weyl alternation sets, we now establish the following result.

\begin{theorem} \label{thm:alternationSetsFinal}
    Let $\lambda = m\w_1 + n\w_2 + k\w_3$  and $\mu = x\w_1 + y\w_2 + z\w_3$ with $m,n,k,x,y,z \in \N$ be two weights of $\ccc$  such that $\frac{m+k+x+z}{2} \in \N$. Then, there are $46$ distinct Weyl alternation sets alternation sets $\mathcal{A}(\lambda,\mu)$, and these are listed in Appendix~\ref{sec:alt-sets-final}. 
\end{theorem}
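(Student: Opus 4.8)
The plan is to split the statement into two parts: an upper bound (there are \emph{at most} $46$ sets) and realizability (each of the $46$ sets actually occurs). The upper bound has already been reduced, via the chain of contradiction arguments in Section~\ref{sec:multiplicity}, to the list in Appendix~\ref{sec:alt-sets-final}: Type~I contradictions pared $2^{48}$ down to $2^{17}$, Type~II contradictions (checked by the code in Appendix~\ref{sec:contradiction-code-1}) pared this to $1124$, Type~III contradictions of Theorem~\ref{thm:contradictions} (code in Appendix~\ref{sec:contradiction-code-2}) to $150$, and the combined Type~II/III search (Appendix~\ref{sec:contradiction-code-1-and-2}) to exactly $46$. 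So for the upper bound I would simply cite this reduction and Theorem~\ref{thm:contradictions}, noting that Lemma~\ref{lemma:divisible} guarantees the integrality hypothesis $\tfrac{m+k+x+z}{2}\in\N$ is exactly the condition under which all the expressions $a,b,\dots,r$ of \eqref{eq:lowecase-eq} are integers, so that Lemma~\ref{lemma:NT} applies and the logical analysis is valid.

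\textbf{Realizability.} For the lower bound I would exhibit, for each of the $46$ candidate sets $S$, an explicit pair $(\lambda,\mu)$ of dominant integral weights with $\A(\lambda,\mu)=S$; these pairs are collected in Table~\ref{table:alt-set-examples}. The verification that a given $(\lambda,\mu)$ yields the claimed set is a finite computation: for each of the $17$ potentially contributing $\sigma\in W$ (the others being ruled out by Type~I), substitute the coordinates of $\lambda$ and $\mu$ into the formulas for $C_{1,\sigma},C_{2,\sigma},C_{3,\sigma}$ in Table~\ref{table:18-elements}, check the integrality and nonnegativity conditions, and—when both hold—invoke Theorem~\ref{thm:q-konstants-equation} (or just the classical $q=1$ specialization) to confirm $\wp(\sigma(\lambda+\rho)-\rho-\mu)>0$. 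Since $m,n,k,x,y,z\in\N$ are fixed integers in each row, these are concrete arithmetic checks; I would point the reader to the Sage program in Appendix~\ref{sec:calculating-alt-sets}, which performs exactly this computation and was used to locate the pairs in the first place. Combining the at-most-$46$ bound with the $46$ explicit realizations, and observing from the table that the sets are pairwise distinct, completes the proof.

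\textbf{Main obstacle.} The genuine difficulty is not in either endpoint of the argument but in the middle: establishing that the contradiction lists of Types~I--III are \emph{sufficient}, i.e.\ that no further forbidden configuration is needed beyond the $52$-odd inequalities of Theorem~\ref{thm:contradictions} together with the pairwise logical clashes of Type~II. This is precisely where the argument becomes computational rather than conceptual: one must enumerate all $2^{17}$ candidate subsets, test each against the full battery of Type~II and Type~III contradictions, and confirm that the survivors number exactly $46$ and coincide with the list for which Table~\ref{table:alt-set-examples} supplies witnesses. I would be explicit that this enumeration is delegated to the Python code in Appendix~\ref{sec:contradiction-code-1-and-2}, and that the correctness of Theorem~\ref{thm:alternationSetsFinal} rests on the correctness of that code together with the case-by-case inequality manipulations already carried out in the proof of Theorem~\ref{thm:contradictions}. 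The proof of Theorem~\ref{thm:alternationSetsFinal} itself is then the short assembly: \textbf{(i)} by the reductions above at most $46$ sets can occur; \textbf{(ii)} Table~\ref{table:alt-set-examples} realizes all $46$; hence exactly $46$ occur, as claimed.
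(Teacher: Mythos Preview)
Your proposal is correct and follows essentially the same approach as the paper: an upper bound via the Type~I/II/III contradiction analysis of Section~\ref{sec:multiplicity} (culminating in the computer search of Appendix~\ref{sec:contradiction-code-1-and-2}), together with realizability via the explicit witness pairs in Table~\ref{table:alt-set-examples}. The paper's own proof is in fact much terser than your write-up---it simply cites Lemma~\ref{lemma:NT} and refers back to the intersection argument of Section~\ref{sec:multiplicity}---so your version makes the two halves (at most $46$, at least $46$) and the reliance on code more explicit than the original, but the substance is the same. One small remark: you need not invoke Theorem~\ref{thm:q-konstants-equation} to verify $\wp(\sigma(\lambda+\rho)-\rho-\mu)>0$ once the coefficients $C_{i,\sigma}$ are nonnegative integers, since the trivial partition using only simple roots already witnesses positivity.
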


\begin{proof}
As described above, let $\lambda = m\w_1 + n\w_2 + k\w_3$ and $\mu = x\w_1 + y\w_2 + z\w_3$ with $m,n,k,x,y,z \in \N$ and $\frac{m+k+x+z}{2} \in \N$.
Let $a$ through $r$ be defined as in~\eqref{eq:lowecase-eq} and $A$ through $Q$ be defined as in~\eqref{eq:so many equations}.

Then, from Corollary~\ref{lemma:NT} we have: \begin{center}
$A$ contributes nontrivially if and only if $a_0 \wedge d_0 \wedge j_0$ holds true, \\
$B$ contributes nontrivially if and only if $b_0 \wedge d_0 \wedge j_0$ holds true, \\
$C$ contributes nontrivially if and  only if $a_0 \wedge e_0 \wedge j_0$ holds true, \\
$D$ contributes nontrivially if and  only if $a_0 \wedge d_0 \wedge l_0$ holds true, \\
$E$ contributes nontrivially if and  only if $c_0 \wedge e_0 \wedge j_0$ holds true, \\
$F$ contributes nontrivially if and  only if $b_0 \wedge f_0 \wedge j_0$ holds true, \\
$G$ contributes nontrivially if and  only if $a_0 \wedge g_0 \wedge l_0$ holds true, \\
$H$ contributes nontrivially if and  only if $b_0 \wedge d_0 \wedge l_0$ holds true, \\
$I$ contributes nontrivially if and  only if $a_0 \wedge e_0 \wedge o_0$ holds true, \\
$J$ contributes nontrivially if and  only if $c_0 \wedge f_0 \wedge j_0$ holds true, \\
$K$ contributes nontrivially if and  only if $b_0 \wedge h_0 \wedge l_0$ holds true, \\
$L$ contributes nontrivially if and  only if $a_0 \wedge i_0 \wedge o_0$ holds true, \\
$M$ contributes nontrivially if and  only if $b_0 \wedge f_0 \wedge p_0$ holds true, \\
$N$ contributes nontrivially if and  only if $c_0 \wedge e_0 \wedge o_0$ holds true, \\
$O$ contributes nontrivially if and  only if $a_0 \wedge g_0 \wedge r_0$ holds true, \\
$P$ contributes nontrivially if and  only if $c_0 \wedge f_0 \wedge p_0$ holds true, \\
$Q$ contributes nontrivially if and  only if $a_0 \wedge i_0 \wedge r_0$ holds true.
\end{center}

Intersecting these solution sets on the lattice $\Z\alpha_1 \oplus \Z\alpha_2 \oplus \Z\alpha_3$, as we have done in Section~\ref{sec:multiplicity}, produces the desired results.
\end{proof}

\section{The \texorpdfstring{$q$}{q}-analog of Kostant's weight multiplicity formula for \texorpdfstring{$\ccc$}{sp\_6(C)}}\label{sec:closed-formula-q-analog-KWMF}

Our final result, Theorem~\ref{thm:finalThm} establishes a closed formula for the $q$-analog of Kostant's weight multiplicity formula for the Lie algebra $\ccc$ for dominant integral weights $\lambda$ and $\mu$. The following theorem synthesizes our previous two results, drawing from the closed formulas for the $q$-analog of Kostant’s partition function given by Theorem~\ref{thm:q-konstants-equation} and the Weyl alternation sets $A(\lambda, \mu)$ given by Theorem~\ref{thm:alternationSetsFinal}.

\begin{theorem}\label{thm:finalThm}
Let $\lambda = m\w_1 + n\w_2 + k\w_3$  and $\mu = x\w_1 + y\w_2 + z\w_3$ with $m,n,k,x,y,z \in \N$ and $\frac{m+k+x+z}{2} \in \N$. Let $a$ through $r$ be defined as in~\eqref{eq:lowecase-eq} and let $A_q$ through $Q_q$ be the $q$-analog version of the expressions $A$ through $Q$, respectively, as defined in~\eqref{eq:so many equations}. Then, the following hold:

\begin{enumerate}
\item If $a,b,c,d,e,f,g,h,i,l,r,o,j \in \N$ and $p \notin \N$, then

$m_q(\lambda, \mu) = \poly{A}{q} - \poly{B}{q} - \poly{C}{q} - \poly{D}{q}- \poly{E}{q} + \poly{F}{q} + \poly{G}{q}  + \poly{H}{q} + \poly{I}{q} - \poly{J}{q} - \poly{K}{q} - \poly{L}{q} - \poly{N}{q} - \poly{O}{q} + \poly{Q}{q}$.

\item If $a,b,c,d,e,f,g,h,i,l,o,p,j \in \N$ and $r \notin \N$, then

$m_q(\lambda, \mu) = \poly{A}{q} - \poly{B}{q} - \poly{C}{q} - \poly{D}{q}- \poly{E}{q} + \poly{F}{q} + \poly{G}{q} + \poly{H}{q} + \poly{I}{q} - \poly{J}{q} - \poly{K}{q} - \poly{L}{q} - \poly{M}{q} - \poly{N}{q} + \poly{P}{q}$.

\item If $a,b,c,d,e,f,g,i,l,r,o,j \in \N$ and $h,p \notin \N$, then

$m_q(\lambda, \mu) = \poly{A}{q} - \poly{B}{q} - \poly{C}{q} - \poly{D}{q}- \poly{E}{q} + \poly{F}{q} + \poly{G}{q} + \poly{H}{q} + \poly{I}{q} - \poly{J}{q} - \poly{L}{q} - \poly{N}{q} - \poly{O}{q} + \poly{Q}{q}$.

\item If $a,b,c,d,e,f,g,i,l,o,p,j \in \N$ and $h,r \notin \N$, then

$m_q(\lambda, \mu) = \poly{A}{q} - \poly{B}{q} - \poly{C}{q} - \poly{D}{q}- \poly{E}{q} + \poly{F}{q} + \poly{G}{q} + \poly{H}{q} + \poly{I}{q} - \poly{J}{q} - \poly{L}{q} - \poly{M}{q} - \poly{N}{q} + \poly{P}{q}$. 

\item If $a,b,c,d,e,f,g,h,l,o,p,j \in \N$ and $i,r \notin \N$, then

$m_q(\lambda, \mu) = \poly{A}{q} - \poly{B}{q} - \poly{C}{q} - \poly{D}{q}- \poly{E}{q} + \poly{F}{q} + \poly{G}{q} + \poly{H}{q} + \poly{I}{q} - \poly{J}{q} - \poly{K}{q} - \poly{M}{q} - \poly{N}{q} + \poly{P}{q}$.

\item If $a,b,c,d,e,f,g,l,o,p,j \in \N$ and $h,i,r \notin \N$, then

$m_q(\lambda, \mu) = \poly{A}{q} - \poly{B}{q} - \poly{C}{q} - \poly{D}{q}- \poly{E}{q} + \poly{F}{q} + \poly{G}{q} + \poly{H}{q} + \poly{I}{q} - \poly{J}{q} - \poly{M}{q} - \poly{N}{q} + \poly{P}{q}$.

\item If $a,b,c,d,e,f,g,h,i,l,o,j \in \N$ and $r,p \notin \N$, then

$m_q(\lambda, \mu) = \poly{A}{q} - \poly{B}{q} - \poly{C}{q} - \poly{D}{q}- \poly{E}{q} + \poly{F}{q} + \poly{G}{q} + \poly{H}{q} + \poly{I}{q} - \poly{J}{q} - \poly{K}{q} - \poly{L}{q} - \poly{N}{q}$.

\item If $a,b,c,d,e,f,g,i,l,o,j \in \N$ and $h,r,p \notin \N$, then

$m_q(\lambda, \mu) = \poly{A}{q} - \poly{B}{q} - \poly{C}{q} - \poly{D}{q}- \poly{E}{q} + \poly{F}{q} + \poly{G}{q} + \poly{H}{q} + \poly{I}{q} - \poly{J}{q} - \poly{L}{q} - \poly{N}{q}$.

\item If $a,b,c,d,e,f,l,o,p,j \in \N$ and $g,h,i,r \notin \N$, then

$m_q(\lambda, \mu) = \poly{A}{q} - \poly{B}{q} - \poly{C}{q} - \poly{D}{q}- \poly{E}{q} + \poly{F}{q} + \poly{H}{q} + \poly{I}{q} - \poly{J}{q} - \poly{M}{q} - \poly{N}{q} + \poly{P}{q}$.

\item If $a,b,c,d,e,f,g,h,l,o,j \in \N$ and $i,r,p \notin \N$, then

$m_q(\lambda, \mu) = \poly{A}{q} - \poly{B}{q} - \poly{C}{q} - \poly{D}{q}- \poly{E}{q} + \poly{F}{q} + \poly{G}{q} + \poly{H}{q} + \poly{I}{q} - \poly{J}{q} - \poly{K}{q} - \poly{N}{q}$.

\item If $a,b,d,e,f,g,h,i,l,r,o,j \in \N$ and $c,p \notin \N$, then

$m_q(\lambda, \mu) = \poly{A}{q} - \poly{B}{q} - \poly{C}{q} - \poly{D}{q}+ \poly{F}{q} + \poly{G}{q} + \poly{H}{q} + \poly{I}{q} - \poly{K}{q} - \poly{L}{q} - \poly{O}{q} + \poly{Q}{q}$.

\item If $a,b,c,d,e,f,g,l,o,j \in \N$ and $h,i,r,p \notin \N$, then

$m_q(\lambda, \mu) = \poly{A}{q} - \poly{B}{q} - \poly{C}{q} - \poly{D}{q}- \poly{E}{q} + \poly{F}{q} + \poly{G}{q} + \poly{H}{q} + \poly{I}{q} - \poly{J}{q} - \poly{N}{q}$.

\item If $a,b,d,e,f,g,i,l,r,o,j \in \N$ and $h,c,p \notin \N$, then

$m_q(\lambda, \mu) = \poly{A}{q} - \poly{B}{q} - \poly{C}{q} - \poly{D}{q}+ \poly{F}{q} + \poly{G}{q} + \poly{H}{q} + \poly{I}{q} - \poly{L}{q} - \poly{O}{q} + \poly{Q}{q}$.

\item If $a,b,c,d,e,f,g,h,l,j \in \N$ and $i,r,o,p \notin \N$, then

$m_q(\lambda, \mu) = \poly{A}{q} - \poly{B}{q} - \poly{C}{q} - \poly{D}{q}- \poly{E}{q} + \poly{F}{q} + \poly{G}{q} + \poly{H}{q} - \poly{J}{q} - \poly{K}{q}$.

\item If $a,b,c,d,e,f,l,o,j \in \N$ and $g,h,i,r,p \notin \N$, then

$m_q(\lambda, \mu) = \poly{A}{q} - \poly{B}{q} - \poly{C}{q} - \poly{D}{q}- \poly{E}{q} + \poly{F}{q} + \poly{H}{q} + \poly{I}{q} - \poly{J}{q} - \poly{N}{q}$.

\item If $a,b,d,e,g,i,l,r,o,j \in \N$ and $f,h,c,p \notin \N$, then

$m_q(\lambda, \mu) = \poly{A}{q} - \poly{B}{q} - \poly{C}{q} - \poly{D}{q}+ \poly{G}{q} + \poly{H}{q} + \poly{I}{q} - \poly{L}{q} - \poly{O}{q} + \poly{Q}{q}$.

\item If $a,b,d,e,f,g,h,i,l,o,j \in \N$ and $r,c,p \notin \N$, then

$m_q(\lambda, \mu) = \poly{A}{q} - \poly{B}{q} - \poly{C}{q} - \poly{D}{q}+ \poly{F}{q} + \poly{G}{q} + \poly{H}{q} + \poly{I}{q} - \poly{K}{q} - \poly{L}{q}$.

\item If $a,b,c,d,e,f,g,l,j \in \N$ and $h,i,r,o,p \notin \N$, then

$m_q(\lambda, \mu) = \poly{A}{q} - \poly{B}{q} - \poly{C}{q} - \poly{D}{q}- \poly{E}{q} + \poly{F}{q} + \poly{G}{q} + \poly{H}{q} - \poly{J}{q}$.

\item If $a,b,d,e,f,g,i,l,o,j \in \N$ and $c,h,r,p \notin \N$, then

$m_q(\lambda, \mu) = \poly{A}{q} - \poly{B}{q} - \poly{C}{q} - \poly{D}{q}+ \poly{F}{q} + \poly{G}{q} + \poly{H}{q} + \poly{I}{q} - \poly{L}{q}$.

\item If $a,b,d,e,f,g,h,l,o,j \in \N$ and $c,i,r,p \notin \N$, then

$m_q(\lambda, \mu) = \poly{A}{q} - \poly{B}{q} - \poly{C}{q} - \poly{D}{q}+ \poly{F}{q} + \poly{G}{q} + \poly{H}{q} + \poly{I}{q} - \poly{K}{q}$.

\item If $a,b,c,d,e,f,l,j \in \N$ and $g,h,i,r,o,p \notin \N$, then

$m_q(\lambda, \mu) = \poly{A}{q} - \poly{B}{q} - \poly{C}{q} - \poly{D}{q}- \poly{E}{q} + \poly{F}{q} + \poly{H}{q} - \poly{J}{q}$.

\item If $a,b,d,e,f,g,h,l,j \in \N$ and $c,r,o,p \notin \N$ and $i \in \Z$, then

$m_q(\lambda, \mu) = \poly{A}{q} - \poly{B}{q} - \poly{C}{q} - \poly{D}{q}+ \poly{F}{q} + \poly{G}{q} + \poly{H}{q} - \poly{K}{q}$.

\item If $a,b,d,e,f,g,l,o,j \in \N$ and $c,h,i,r,p \notin \N$, then

$m_q(\lambda, \mu) = \poly{A}{q} - \poly{B}{q} - \poly{C}{q} - \poly{D}{q}+ \poly{F}{q} + \poly{G}{q} + \poly{H}{q} + \poly{I}{q}$.

\item If $a,b,d,e,g,i,l,o,j \in \N$ and $c,f,h,r,p \notin \N$, then

$m_q(\lambda, \mu) = \poly{A}{q} - \poly{B}{q} - \poly{C}{q} - \poly{D}{q}+ \poly{G}{q} + \poly{H}{q} + \poly{I}{q} - \poly{L}{q}$.

\item If $a,d,e,g,i,l,r,o,j \in \N$ and $b,c,h,p \notin \N$ and $f \in \Z$, then

$m_q(\lambda, \mu) = \poly{A}{q} - \poly{C}{q} - \poly{D}{q}+ \poly{G}{q} + \poly{I}{q} - \poly{L}{q} - \poly{O}{q} + \poly{Q}{q}$.

\item If $a,b,d,e,f,g,l,j \in \N$ and $c,h,r,o,p \notin \N$ and $i \in \Z$, then

$m_q(\lambda, \mu) = \poly{A}{q} - \poly{B}{q} - \poly{C}{q} - \poly{D}{q}+ \poly{F}{q} + \poly{G}{q} + \poly{H}{q}$.

\item If $a,b,d,e,f,l,o,j \in \N$ and $c,g,h,i,r,p \notin \N$, then

$m_q(\lambda, \mu) = \poly{A}{q} - \poly{B}{q} - \poly{C}{q} - \poly{D}{q}+ \poly{F}{q} + \poly{H}{q} + \poly{I}{q}$.

\item If $a,b,d,e,g,l,o,j \in \N$ and $c,f,h,i,r,p \notin \N$, then

$m_q(\lambda, \mu) = \poly{A}{q} - \poly{B}{q} - \poly{C}{q} - \poly{D}{q}+ \poly{G}{q} + \poly{H}{q} + \poly{I}{q}$.

\item If $a,b,c,d,e,f,j \in \N$ and $g,h,i,l,r,o,p \notin \N$, then

$m_q(\lambda, \mu) = \poly{A}{q} - \poly{B}{q} - \poly{C}{q} - \poly{E}{q} + \poly{F}{q} - \poly{J}{q}$.

\item If $a,b,d,e,f,l,j \in \N$ and $c,g,h,i,r,o,p \notin \N$, then

$m_q(\lambda, \mu) = \poly{A}{q} - \poly{B}{q} - \poly{C}{q} - \poly{D}{q}+ \poly{F}{q} + \poly{H}{q}$.

\item If $a,b,d,e,g,l,j \in \N$ and $c,f,h,r,o,p \notin \N$ and $i \in \Z$, then

$m_q(\lambda, \mu) = \poly{A}{q} - \poly{B}{q} - \poly{C}{q} - \poly{D}{q}+ \poly{G}{q} + \poly{H}{q}$.

\item If $ a,d,e,g,i,l,o,j \in \N$ and $b,c,h,r,p \notin \N$ and $f \in \Z$, then

$m_q(\lambda, \mu) = \poly{A}{q} - \poly{C}{q} - \poly{D}{q}+ \poly{G}{q} + \poly{I}{q} - \poly{L}{q}$.

\item If $a,b,d,e,l,o,j \in \N$ and $c,f,g,h,i,r,p \notin \N$, then

$m_q(\lambda, \mu) = \poly{A}{q} - \poly{B}{q} - \poly{C}{q} - \poly{D}{q}+ \poly{H}{q} + \poly{I}{q}$.

\item If $a,b,d,e,l,j \in \N$ and $c,f,g,h,i,r,o,p \notin \N$, then

$m_q(\lambda, \mu) = \poly{A}{q} - \poly{B}{q} - \poly{C}{q} - \poly{D}{q}+ \poly{H}{q}$.

\item If $a,d,e,g,l,o,j \in \N$ and $b,c,h,i,r,p \notin \N$ and $f\in \Z$, then

$m_q(\lambda, \mu) = \poly{A}{q} - \poly{C}{q} - \poly{D}{q}+ \poly{G}{q} + \poly{I}{q}$.

\item If $a,b,d,e,f,j \in \N$ and $c,g,h,i,l,r,o,p \notin \N$, then

$m_q(\lambda, \mu) = \poly{A}{q} - \poly{B}{q} - \poly{C}{q} + \poly{F}{q}$.

\item If $a,b,d,l,j \in \N$ and $c,e,f,g,h,i,r,o,p \notin \N$, then

$m_q(\lambda, \mu) = \poly{A}{q} - \poly{B}{q} - \poly{D}{q}+ \poly{H}{q}$.

\item If $a,d,e,g,l,j \in \N$ and $b,c,h,i,r,o,p \notin \N$ and $f \in \Z$, then

$m_q(\lambda, \mu) = \poly{A}{q} - \poly{C}{q} - \poly{D}{q}+ \poly{G}{q}$.

\item If $a,d,e,l,o,j \in \N$ and $b,c,g,h,i,r,p \notin \N$ and $f \in \Z$, then

$m_q(\lambda, \mu) = \poly{A}{q} - \poly{C}{q} - \poly{D}{q}+ \poly{I}{q}$.

\item If $a,b,d,e,j \in \N$ and $c,f,g,h,i,l,r,o,p \notin \N$, then

$m_q(\lambda, \mu) = \poly{A}{q} - \poly{B}{q} - \poly{C}{q}$.

\item If $a,d,e,l,j \in \N$ and $b,c,g,h,i,r,o,p \notin \N$ and $f \in \Z$, then 

$m_q(\lambda, \mu) = \poly{A}{q} - \poly{C}{q} - \poly{D}{q}$.

\item If $a,b,d,j \in \N$ and $c,e,f,g,h,i,l,r,o,p \notin \N$, then

$m_q(\lambda, \mu) = \poly{A}{q} - \poly{B}{q}$.

\item If ($a,d,e,j,g \in \N$ and $b,c,l,o,h,i,p,r,f \notin \N$) or
($a,d,e,j,g,i \in \N$ and $b,c,l,o,h,p,r,f \notin \N$) or
($a,d,e,j,f \in \N$ and $b,c,l,o,g,h,i,p,r \notin \N$) or
($a,d,e,j \in \N$ and $b,c,l,o,g,h,i,p,r,f \notin \N$), then

$m_q(\lambda, \mu) = \poly{A}{q} - \poly{C}{q}$.

\item If $a,d,l,j \in \N$ and $b,c,e,f,g,h,i,r,o,p \notin \N$, then

$m_q(\lambda, \mu) = \poly{A}{q} - \poly{D}{q}$.

\item If $a,d,j \in \N$ and $b,c,e,f,g,h,i,l,r,o,p \notin \N$, then 

$m_q(\lambda, \mu) = \poly{A}{q}$.

\item $m_q(\lambda, \mu) = 0$ otherwise.
\end{enumerate}
\end{theorem}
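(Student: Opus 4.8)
The plan is to substitute the paper's two main computations into the reduced form of Kostant's $q$-weight multiplicity formula. Recall from Section~\ref{sec:background} that
\[
m_q(\lambda,\mu)=\sum_{\sigma\in W}(-1)^{\ell(\sigma)}\wp_q(\sigma(\lambda+\rho)-\rho-\mu)=\sum_{\sigma\in\A(\lambda,\mu)}(-1)^{\ell(\sigma)}\wp_q(\sigma(\lambda+\rho)-\rho-\mu),
\]
the second equality being valid because $\wp_q(\xi)=0$ if and only if $\wp(\xi)=0$, so $\wp$ and $\wp_q$ have the same support and the dropped terms all vanish; in particular $\A(\lambda,\mu)$ may be computed exactly as in the non-$q$ setting, so Theorem~\ref{thm:alternationSetsFinal} applies. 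Since $\lambda,\mu$ are dominant integral with $\tfrac{m+k+x+z}{2}\in\N$, Lemma~\ref{lemma:divisible} shows that the expressions $a,b,\ldots,r$ of~\eqref{eq:lowecase-eq} are all integers, so by Lemma~\ref{lemma:NT} each of $A,B,\ldots,Q$ contributes nontrivially precisely when its associated triple of sign conditions holds. By the Type~I analysis of Section~\ref{sec:multiplicity}, no Weyl group element outside the $17$ of Table~\ref{table:18-elements} can lie in $\A(\lambda,\mu)$, so $m_q(\lambda,\mu)$ is a signed sum of the polynomials $A_q,\ldots,Q_q$ (the $q$-analogs of the expressions in~\eqref{eq:so many equations}) indexed by those whose sign conditions are met.

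First I would iterate over the $46$ alternation sets of Theorem~\ref{thm:alternationSetsFinal}. Each is carved out of the admissible parameter region by a conjunction of the atomic statements $a_0/a_1,\ldots,r_0/r_1$, the admissible conjunctions being exactly those surviving the Type~II contradictions of Definition~\ref{def:type ii contradictions} and the Type~III contradictions of Theorem~\ref{thm:contradictions}. For a fixed admissible conjunction, Lemma~\ref{lemma:NT} determines which of $A,\ldots,Q$ lie in $\A(\lambda,\mu)$; attaching to each the sign $(-1)^{\ell(\sigma)}$, read off from the reduced word of the corresponding $\sigma$ in the displayed list of $W$ (so $\ell(1)=0$, $\ell(s_i)=1$, and the length-$2$, $3$, and $4$ words are exactly the ones shown there), produces the signed sum on the right of the matching displayed identity. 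Several alternation sets may collapse to the same polynomial and are therefore grouped, as in the item whose hypothesis is a disjunction of four sign patterns. It then remains to treat the case in which none of those hypotheses holds: there $A$ must contribute trivially, i.e.\ $a_1\vee d_1\vee j_1$ is true, and each of $a_1$, $d_1$, $j_1$ forces every one of the $17$ candidates to contribute trivially---directly when its defining triple contains one of $a_0,d_0,j_0$, and otherwise via the $(a_1\wedge\cdot)$, $(d_1\wedge\cdot)$, $(j_1\wedge\cdot)$ families of Theorem~\ref{thm:contradictions}---so that $\A(\lambda,\mu)=\emptyset$ and $m_q(\lambda,\mu)=0$, the final alternative. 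Expanding each surviving $X_q$ by Theorem~\ref{thm:q-konstants-equation} applied to its weight, e.g.\ $A_q=\wp_q(a\al{1}+d\al{2}+j\al{3})$ and similarly for $B_q,\ldots,Q_q$, turns every case into a fully explicit closed form.

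The main obstacle is the volume of bookkeeping rather than any conceptual difficulty: one must check that the sign split on $a,\ldots,r$, together with all Type~I, Type~II, and Type~III contradictions, partitions the parameter region into exactly the $46$ regions of Theorem~\ref{thm:alternationSetsFinal}, and then, region by region, pin down the membership of $\A(\lambda,\mu)$ and the length parity of each contributing element. I would carry out and cross-check both steps with the accompanying code (Appendices~\ref{sec:contradiction-code-1-and-2} and~\ref{sec:calculating-alt-sets}, and~\cite{CODE}), after which matching the resulting signed sums of $A_q,\ldots,Q_q$ against the tabulated alternation sets completes the proof.
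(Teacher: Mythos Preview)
Your proposal is correct and follows essentially the same approach as the paper: the paper's proof is the single line ``This result follows directly from Theorem~\ref{thm:alternationSetsFinal},'' and your write-up is a careful unpacking of precisely that deduction---reducing $m_q$ to the sum over $\A(\lambda,\mu)$, invoking Lemma~\ref{lemma:NT} to read off membership from the sign patterns of $a,\ldots,r$, matching each of the $46$ alternation sets to one of the enumerated cases, and attaching the length-parity signs.
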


\begin{proof}
This result follows directly from Theorem~\ref{thm:alternationSetsFinal}.
\end{proof}

To conclude, we implement Theorem~\ref{thm:finalThm} to calculate $m_q(\lambda, \mu)$ in three examples. 
For the first example, we confirm a well-known result of Lusztig (see \cite{LL}*{19, Section 10, p. 226}) which established that $m_q(\tilde{\alpha},0)=q+q^{3}+q^{5}$, where 
 $\tilde{\alpha}$ is the highest root of $\ccc$ and $1$, $3$, and $5$ are the exponents of $\ccc$. In the second example, we let $\lambda=\mu = 0$ and in the final example, we confirm one of our results from Table~\ref{table:alt-set-examples}.

\begin{example}
Recall that the highest root of $\ccc$ is $\lambda=\tilde{\alpha} = 2\alpha_1 + 2\alpha_2 + \alpha_3=2\w_1$. Letting $\mu=0$ and evaluating the variables $a$ through $o$, we find that {only $a,d,e,j,l\in\NN$ with $a=d=2$, $e=j=1$, and $l=0$.} Then by case 41 in Theorem \ref{thm:finalThm}, we have that $$m_q(\tilde{\alpha},0)=A_q-C_q-D_q.$$ 
Computing the associated values of the $q$-analog of the partition function yields
\begin{align*}
    A_q&=q + 2 q^2 + 4 q^3 + 2 q^4 + q^5,\\
    C_q&=q^2+2q^3+q^4,\mbox{ and}\\
    D_q&=q + q^3 + q^4.
\end{align*}
Thus, 
$$m_q(\highestroot,0)=q+q^3+q^5,$$
which when evaluated at $q=1$ yields $m(\tilde{\alpha}, 0)=3$, as expected.
\end{example}

\begin{example}
Now, we let $\lambda = \mu = 0$. In this case, the only coefficients which are nonnegative are $a = d = j = 0$. This implies $$m_q = A = \wp_q(a\alpha_1 + d\alpha_2 + j\alpha_3).$$ We know $\wp_q(0\alpha_1 + 0\alpha_2 + 0\alpha_3) = 1$, so $$m_q(0, 0) = 1.$$ Similarly, when we let $q = 1$, we obtain $m(0,0) = 1$.  
\end{example}

\begin{example}
Let $\lambda = 2\w_3$ and $\mu = \w_1+\w_3$. Computing for the nonnegative coefficients, we have that $a,d,e,j\in\NN$ with $a=e=0,$ and $d = j =1 $. Then, by case 43 in Theorem \ref{thm:finalThm}, this implies that $$m_q= A_q-C_q,$$ as expected from Table~\ref{table:alt-set-examples}. Then, computing the value of the $q$-analog of the partition function, we find that $A_q=q+q^2$ and $C_q=q$. Hence $$m_q = q^2,$$ when evaluated at $q = 1$ yields $m(2\w_3,\w_1+\w_3) = 1$.
\end{example}

\section{Future work}\label{sec:future}

In the work of Harris, Lescinsky, and Mabie \cite{HLM}, they considered the Lie algebra $\mathfrak{sl}_{3}(\mathbb{C})$ and provided characterizations of the Weyl alternation sets for $\lambda,\mu\in\mathbb{Z}\w_1\oplus\mathbb{Z}\w_2$. 
Moreover, these Weyl alternation sets are described via patterns on the root lattice. This work fixes a particular weight $\mu $ and color codes all weights $\lambda$ and $\beta$ in the root lattice which satisfy $\A(\lambda,\mu)=\A(\beta,\mu)$. 
In light of this, one direction of future work is to provide the same analysis of these lattice patterns in the case of $\mathfrak{sp}_6(\mathbb{C})$. A first step in doing this would require one to find all Weyl alternation sets for weights $\lambda$ and $\mu$ being dominant integral weights.

\bibliography{Bibliography}
\bibliographystyle{plain}

\addresseshere

\appendix 

\newpage

\section{Mathematica code evaluating the \texorpdfstring{$q$}{q}-analog of Kostant's partition function} \label{sec:kpf-eval-code}

This program takes as input the coefficients $m,n,k$ of a weight $\mu$ and outputs the $q$-analog of Kostant's partition function for $\mu$. To run the program, input the values of $m,n,k$ in the function $P[m,n,k]$.

\noindent $P[m,n,k]$ evaluated to the $q$-analog of Kostant's partition function for a weight $\mu = m * \alpha_1 + n * \alpha_2 + k * \alpha_3$, just plug in $m$, $n$, and $k$.

\begin{mmaCell}[pattern={m,n,k,m_,n_,k_}, undefined={P,q}, functionlocal={h,g,f,i,d,e}]{Code}
    P[m_, n_, k_] := 
    Sum[q^(m + n + k - d - e - 2*f - 3*g - 4*h - 2*i), 
        {h, 0, Min[Floor[m/2], Floor[n/2], k]}, 
        {g, 0, Min[m - 2*h, Floor[(n - 2*h)/2], k - h]}, 
        {f, 0, Min[m - 2*h - g, n - 2*h - 2*g, k - h - g]}, 
        {i, 0, Min[Floor[(n - 2*h - 2*g - f)/2], k - h - g - f]}, 
        {d, 0, Min[m - 2*h - g - f, n - 2*h - 2*g - f - 2*i]}, 
        {e, 0, Min[n - 2*h - 2*g - f - 2*i - d, k - h - g - f - i]}]
    P[\mmaUnd{m}, \mmaUnd{n}, \mmaUnd{k}]
\end{mmaCell}

\section{Mathematica code evaluating \texorpdfstring{$\sigma(\lambda+\rho)-\rho-\mu$}{sigma(lambda+rho) - rho - mu}} \label{sec:coeff-code}
This program takes as input a $\sigma \in W$ and uses it to determine the coefficients $C_{i, \sigma}$ of the expression $\sigma(\lambda+\rho)-\rho-\mu=C_{1,\sigma}\alpha_1+C_{2,\sigma}\alpha_2+C_{3,\sigma}\alpha_3$. To use, run the first cell, then input the desired $\sigma \in W$ in place of the word `Sigma' written with every generator separated by a comma.

\begin{mmaCell}[label={In[1]:=},undefined={alpha1,alpha2,alpha3,m,n,k,x,y,z},pattern={u_Plus,u,a_,r_,a,r,}]{Code}
    s1[alpha1] = -alpha1
    s1[alpha2] = alpha1 + alpha2
    s1[alpha3] = alpha3
    
    s2[alpha1] = alpha1 + alpha2
    s2[alpha2] = -alpha2
    s2[alpha3] = 2 alpha2 + alpha3
    
    s3[alpha1] = alpha1
    s3[alpha2] = alpha2 + alpha3
    s3[alpha3] = -alpha3
    
    numberq[_] := False
    numberq[_?NumericQ] := True
    numericq[m | n | k | x | y | z] := True;
    numberq[m | n | k | x | y | z] := True;
    s1[u_Plus] := s1 /@ u
    s1[a_?numberq r_] := a s1[r]
    s2[u_Plus] := s2 /@ u
    s2[a_?numberq r_] := a s2[r]
    s3[u_Plus] := s3 /@ u
    s3[a_?numberq r_] := a s3[r]
\end{mmaCell}

\noindent Run the previous cell to send the definitions to the Kernel. Input $\sigma \in W$ in place of `Sigma' (on the first line) with the $s_i$ separated by commas. (ie. $s_1s_2s_3$ would be written as s1, s2, s3)

\begin{mmaCell}[label={In[2]:=},undefined={alpha1,alpha2,alpha3,m,n,k,x,y,z,Sigma}]{Code}
    Simplify[Composition[Simplify, Sigma][
        Distribute[(m + n + k + 3)*alpha1] + 
        Distribute[(m + 2 n + 2 k + 5)*alpha2] + 
        Distribute[(m/2 + n + (3/2)*k + 3)*alpha3]] + 
        Distribute[-(x + y + z + 3)*alpha1] + 
        Distribute[-(x + 2 y + 2 z + 5)*alpha2] + 
        Distribute[-(x/2 + y + (3/2)*z + 3)*alpha3]]
\end{mmaCell}

\section{Python code checking for contradictions of Type II} \label{sec:contradiction-code-1}
This program generates every subset of the remaining $17$ Weyl group elements which can be in an alternation set. It then checks the subset and discards if it contains a Type II contradiction. 
\begin{lstlisting}[language=Python]
import itertools
from updated_alt_sets import currentAltSets2

#getting coefficients
bigToSmall = {
    "A" : {"a", "d", "j"}, "B" : {"b", "d", "j"}, "C" : {"a", "e", "j"},
    "D" : {"a", "d", "l"}, "E" : {"c", "e", "j"}, "F" : {"b", "f", "j"},
    "G" : {"a", "g", "l"}, "H" : {"b", "d", "l"}, "I" : {"a", "e", "o"},
    "J" : {"c", "f", "j"}, "K" : {"b", "h", "l"},
    "L" : {"a", "i", "o"}, "M" : {"b", "f", "p"}, "N" : {"c", "e", "o"},
    "O" : {"a", "g", "q"}, "P" : {"c", "f", "p"}, "Q" : {"a", "i", "q"},
}

#contradictions of type III
zeroToOne = {
    "b":{"a"}, "c":{"a", "b", "f"}, "e":{"d"}, "f":{"d", "e"},
    "g":{"d","e"}, "h":{"d","e","f","g"}, "i":{"d","e","g"}, "l":{"j"},
    "o":{"j","l"}, "p":{"j","l","o"}, "q":{"j","l","o","i"}, "a":set(),
    "d": set(), "j": set(), "p": set(),
}

allElementList = ["A", "B", "C", "D", "E", "F", "G", "H", "I", "J", "K", "L", "M", "N",    "O", "P", "Q"]
allElementSet = set(allElementList)

#turns any imputed set of big letters into small ones
def takeBigToSmall(combination):
    allSmall = set()
    for element in combination:
        allSmall = allSmall.union(bigToSmall[element])
        allSmall.union(bigToSmall[element]))
    return allSmall

#gives small letters that need to be >=0 or there's a type II contradiction
def giveContradictionsII(combination):
    return takeBigToSmall(combination)

#gives small letters that need to be >= 0 or there's a type III contradiction
def giveContradictionsIII(combination):
    allSmall = takeBigToSmall(combination)

    #we are assuming this combination is included
    smallContradictions = set() #these also need to be >=0
    for small in allSmall:
        smallContradictions = smallContradictions.union(zeroToOne[small])
    if {"a","g"}.issubset(allSmall):
        smallContradictions = smallContradictions.union("t")
        smallContradictions.union(zeroToOne[small]))

    return smallContradictions

def giveNeeded(combination): #combination = an alternation set
    #getting togher all small letters that need to be >= 0
    all_contradictions = giveContradictionsII(combination).union(giveContradictionsIII(combination))
    
    #gettin all 3-element subsets of all_contradictions
    contraSubsets = list(itertools.combinations(all_contradictions,3))

    needed = []
    #turning tuples into sets
    contraChangedSubsets = list(set(subset) for subset in contraSubsets)

    #check what big letters need to be in the alt set
    for subset in contraChangedSubsets:
        for element in allElementList:
            if subset == bigToSmall[element] and element not in combination:
                needed.append(element)
    #account for s_0 and o_0
    if all(x in takeBigToSmall(combination) for x in ['o','s']):
        needed.append('K') #because K is a no-longer used letter
    return needed

def removeSubsets(altSets):
    newAltSet = []
    for subset in altSets:
        if len(giveNeeded(subset)) == 0:
            if not(all(x in subset for x in ['A','N']) and 'J' not in subset):
                newAltSet.append(subset)
    print(len(newAltSet))
    return newAltSet

print(removeSubsets(currentAltSets2))
\end{lstlisting}

\section{Alternation sets without contradictions of Type II} \label{sec:alt-sets-I}
\begin{multicols}{3}
\footnotesize
\begin{enumerate}
\item \{\}, \item \{N\}, \item \{J\}, \item \{C\}, \item \{E\}, \item \{F\}, \item \{A\}, \item \{K\}, \item \{B\}, \item \{P\}, \item \{H\}, \item \{O\}, \item \{D\}, \item \{Q\}, \item \{I\}, \item \{G\}, \item \{M\}, \item \{L\}, \item \{N, E\}, \item \{N, K\}, \item \{N, P\}, \item \{N, H\}, \item \{N, I\}, \item \{J, E\}, \item \{J, F\}, \item \{J, A\}, \item \{J, P\}, \item \{J, O\}, \item \{J, Q\}, \item \{J, G\}, \item \{J, L\}, \item \{C, E\}, \item \{C, F\}, \item \{C, A\}, \item \{C, K\}, \item \{C, O\}, \item \{C, Q\}, \item \{C, I\}, \item \{C, G\}, \item \{E, K\}, \item \{E, B\}, \item \{F, K\}, \item \{F, B\}, \item \{F, O\}, \item \{F, Q\}, \item \{F, G\}, \item \{F, M\}, \item \{F, L\}, \item \{A, B\}, \item \{A, O\}, \item \{A, D\}, \item \{A, Q\}, \item \{A, L\}, \item \{K, H\}, \item \{K, Q\}, \item \{K, I\}, \item \{K, G\}, \item \{K, M\}, \item \{K, L\}, \item \{B, H\}, \item \{P, O\}, \item \{P, D\}, \item \{P, Q\}, \item \{P, G\}, \item \{P, M\}, \item \{P, L\}, \item \{H, D\}, \item \{H, M\}, \item \{O, Q\}, \item \{O, I\}, \item \{O, G\}, \item \{O, M\}, \item \{D, Q\}, \item \{D, I\}, \item \{D, G\}, \item \{D, L\}, \item \{Q, M\}, \item \{Q, L\}, \item \{I, G\}, \item \{I, M\}, \item \{I, L\}, \item \{G, M\}, \item \{G, L\}, \item \{M, L\}, \item \{N, J, E\}, \item \{N, E, K\}, \item \{N, E, B\}, \item \{N, K, H\}, \item \{N, K, I\}, \item \{N, P, I\}, \item \{N, P, M\}, \item \{N, O, I\}, \item \{N, D, I\}, \item \{N, I, G\}, \item \{N, I, L\}, \item \{J, C, E\}, \item \{J, E, F\}, \item \{J, E, P\}, \item \{J, F, K\}, \item \{J, F, B\}, \item \{J, F, O\}, \item \{J, F, Q\}, \item \{J, F, G\}, \item \{J, F, L\}, \item \{J, A, P\}, \item \{J, A, O\}, \item \{J, A, D\}, \item \{J, A, Q\}, \item \{J, A, L\}, \item \{J, P, O\}, \item \{J, P, Q\}, \item \{J, P, G\}, \item \{J, P, L\}, \item \{J, O, Q\}, \item \{J, O, G\}, \item \{J, Q, L\}, \item \{J, G, L\}, \item \{C, E, A\}, \item \{C, E, K\}, \item \{C, E, O\}, \item \{C, E, Q\}, \item \{C, E, G\}, \item \{C, F, K\}, \item \{C, F, O\}, \item \{C, F, Q\}, \item \{C, F, I\}, \item \{C, F, G\}, \item \{C, F, M\}, \item \{C, A, B\}, \item \{C, A, O\}, \item \{C, A, D\}, \item \{C, A, Q\}, \item \{C, A, I\}, \item \{C, K, Q\}, \item \{C, K, I\}, \item \{C, K, G\}, \item \{C, O, Q\}, \item \{C, O, I\}, \item \{C, O, G\}, \item \{C, I, G\}, \item \{C, I, L\}, \item \{E, B, H\}, \item \{F, A, B\}, \item \{F, K, Q\}, \item \{F, K, G\}, \item \{F, K, M\}, \item \{F, K, L\}, \item \{F, B, H\}, \item \{F, B, M\}, \item \{F, O, Q\}, \item \{F, O, G\}, \item \{F, O, M\}, \item \{F, Q, M\}, \item \{F, Q, L\}, \item \{F, G, M\}, \item \{F, G, L\}, \item \{F, M, L\}, \item \{A, B, O\}, \item \{A, B, Q\}, \item \{A, B, L\}, \item \{A, O, Q\}, \item \{A, D, Q\}, \item \{A, D, G\}, \item \{A, D, L\}, \item \{A, Q, L\}, \item \{K, B, H\}, \item \{K, P, M\}, \item \{K, H, D\}, \item \{K, H, M\}, \item \{K, O, G\}, \item \{K, Q, M\}, \item \{K, Q, L\}, \item \{K, I, G\}, \item \{K, I, M\}, \item \{K, I, L\}, \item \{K, G, M\}, \item \{K, G, L\}, \item \{K, M, L\}, \item \{P, H, M\}, \item \{P, O, Q\}, \item \{P, O, G\}, \item \{P, O, M\}, \item \{P, D, Q\}, \item \{P, D, G\}, \item \{P, D, L\}, \item \{P, Q, M\}, \item \{P, Q, L\}, \item \{P, G, M\}, \item \{P, G, L\}, \item \{P, M, L\}, \item \{H, D, Q\}, \item \{H, D, I\}, \item \{H, D, G\}, \item \{H, D, M\}, \item \{H, D, L\}, \item \{O, D, G\}, \item \{O, Q, G\}, \item \{O, Q, M\}, \item \{O, Q, L\}, \item \{O, I, G\}, \item \{O, I, M\}, \item \{O, G, M\}, \item \{D, Q, L\}, \item \{D, I, G\}, \item \{D, I, L\}, \item \{D, G, L\}, \item \{Q, I, L\}, \item \{Q, M, L\}, \item \{I, G, M\}, \item \{I, G, L\}, \item \{I, M, L\}, \item \{G, M, L\}, \item \{N, J, E, F\}, \item \{N, J, E, P\}, \item \{N, C, E, I\}, \item \{N, E, B, H\}, \item \{N, K, P, M\}, \item \{N, K, I, G\}, \item \{N, K, I, L\}, \item \{N, P, H, M\}, \item \{N, P, O, I\}, \item \{N, P, D, I\}, \item \{N, P, I, G\}, \item \{N, P, I, M\}, \item \{N, P, I, L\}, \item \{N, H, D, I\}, \item \{N, O, I, G\}, \item \{N, D, I, G\}, \item \{N, D, I, L\}, \item \{N, Q, I, L\}, \item \{N, I, G, L\}, \item \{J, C, E, F\}, \item \{J, C, E, A\}, \item \{J, C, E, P\}, \item \{J, C, E, O\}, \item \{J, C, E, Q\}, \item \{J, C, E, G\}, \item \{J, E, F, K\}, \item \{J, E, F, B\}, \item \{J, F, A, B\}, \item \{J, F, K, Q\}, \item \{J, F, K, G\}, \item \{J, F, K, L\}, \item \{J, F, B, H\}, \item \{J, F, P, M\}, \item \{J, F, O, Q\}, \item \{J, F, O, G\}, \item \{J, F, Q, L\}, \item \{J, F, G, L\}, \item \{J, A, P, O\}, \item \{J, A, P, D\}, \item \{J, A, P, Q\}, \item \{J, A, P, L\}, \item \{J, A, O, Q\}, \item \{J, A, D, Q\}, \item \{J, A, D, G\}, \item \{J, A, D, L\}, \item \{J, A, Q, L\}, \item \{J, P, O, Q\}, \item \{J, P, O, G\}, \item \{J, P, Q, L\}, \item \{J, P, G, L\}, \item \{J, O, Q, G\}, \item \{J, O, Q, L\}, \item \{C, E, A, B\}, \item \{C, E, A, O\}, \item \{C, E, A, D\}, \item \{C, E, A, Q\}, \item \{C, E, K, Q\}, \item \{C, E, K, G\}, \item \{C, E, O, Q\}, \item \{C, E, O, G\}, \item \{C, F, A, B\}, \item \{C, F, K, Q\}, \item \{C, F, K, I\}, \item \{C, F, K, G\}, \item \{C, F, K, M\}, \item \{C, F, O, Q\}, \item \{C, F, O, I\}, \item \{C, F, O, G\}, \item \{C, F, O, M\}, \item \{C, F, Q, M\}, \item \{C, F, I, G\}, \item \{C, F, I, M\}, \item \{C, F, I, L\}, \item \{C, F, G, M\}, \item \{C, A, B, O\}, \item \{C, A, B, Q\}, \item \{C, A, B, I\}, \item \{C, A, O, Q\}, \item \{C, A, O, I\}, \item \{C, A, D, Q\}, \item \{C, A, D, I\}, \item \{C, A, D, G\}, \item \{C, A, I, L\}, \item \{C, K, O, G\}, \item \{C, K, I, G\}, \item \{C, K, I, L\}, \item \{C, O, Q, G\}, \item \{C, O, I, G\}, \item \{C, Q, I, L\}, \item \{C, I, G, L\}, \item \{E, K, B, H\}, \item \{F, A, B, O\}, \item \{F, A, B, Q\}, \item \{F, A, B, M\}, \item \{F, A, B, L\}, \item \{F, K, B, H\}, \item \{F, K, O, G\}, \item \{F, K, Q, M\}, \item \{F, K, Q, L\}, \item \{F, K, G, M\}, \item \{F, K, G, L\}, \item \{F, K, M, L\}, \item \{F, B, H, M\}, \item \{F, O, Q, G\}, \item \{F, O, Q, M\}, \item \{F, O, Q, L\}, \item \{F, O, G, M\}, \item \{F, Q, M, L\}, \item \{F, G, M, L\}, \item \{A, B, H, D\}, \item \{A, B, O, Q\}, \item \{A, B, Q, L\}, \item \{A, O, D, G\}, \item \{A, O, Q, L\}, \item \{A, D, Q, L\}, \item \{A, D, G, L\}, \item \{K, P, H, M\}, \item \{K, P, Q, M\}, \item \{K, P, G, M\}, \item \{K, P, M, L\}, \item \{K, H, D, Q\}, \item \{K, H, D, I\}, \item \{K, H, D, G\}, \item \{K, H, D, M\}, \item \{K, H, D, L\}, \item \{K, O, Q, G\}, \item \{K, O, I, G\}, \item \{K, O, G, M\}, \item \{K, Q, I, L\}, \item \{K, Q, M, L\}, \item \{K, I, G, M\}, \item \{K, I, G, L\}, \item \{K, I, M, L\}, \item \{K, G, M, L\}, \item \{P, H, D, M\}, \item \{P, O, D, G\}, \item \{P, O, Q, G\}, \item \{P, O, Q, M\}, \item \{P, O, Q, L\}, \item \{P, O, G, M\}, \item \{P, D, Q, L\}, \item \{P, D, G, L\}, \item \{P, Q, M, L\}, \item \{P, G, M, L\}, \item \{H, O, D, G\}, \item \{H, D, Q, M\}, \item \{H, D, Q, L\}, \item \{H, D, I, G\}, \item \{H, D, I, M\}, \item \{H, D, I, L\}, \item \{H, D, G, M\}, \item \{H, D, G, L\}, \item \{H, D, M, L\}, \item \{O, D, Q, G\}, \item \{O, D, I, G\}, \item \{O, Q, I, L\}, \item \{O, Q, G, M\}, \item \{O, Q, G, L\}, \item \{O, Q, M, L\}, \item \{O, I, G, M\}, \item \{D, Q, I, L\}, \item \{D, I, G, L\}, \item \{Q, I, M, L\}, \item \{I, G, M, L\}, \item \{N, J, C, E, I\}, \item \{N, J, E, F, K\}, \item \{N, J, E, F, B\}, \item \{N, C, E, A, I\}, \item \{N, C, E, K, I\}, \item \{N, C, E, O, I\}, \item \{N, C, E, I, G\}, \item \{N, C, E, I, L\}, \item \{N, E, K, B, H\}, \item \{N, K, P, H, M\}, \item \{N, K, P, I, M\}, \item \{N, K, H, D, I\}, \item \{N, K, O, I, G\}, \item \{N, K, Q, I, L\}, \item \{N, K, I, G, L\}, \item \{N, P, O, I, G\}, \item \{N, P, O, I, M\}, \item \{N, P, D, I, G\}, \item \{N, P, D, I, L\}, \item \{N, P, Q, I, L\}, \item \{N, P, I, G, M\}, \item \{N, P, I, G, L\}, \item \{N, P, I, M, L\}, \item \{N, H, D, I, G\}, \item \{N, H, D, I, L\}, \item \{N, O, D, I, G\}, \item \{N, O, Q, I, L\}, \item \{N, D, Q, I, L\}, \item \{N, D, I, G, L\}, \item \{J, C, E, F, K\}, \item \{J, C, E, F, O\}, \item \{J, C, E, F, Q\}, \item \{J, C, E, F, G\}, \item \{J, C, E, A, P\}, \item \{J, C, E, A, O\}, \item \{J, C, E, A, D\}, \item \{J, C, E, A, Q\}, \item \{J, C, E, P, O\}, \item \{J, C, E, P, Q\}, \item \{J, C, E, P, G\}, \item \{J, C, E, O, Q\}, \item \{J, C, E, O, G\}, \item \{J, E, F, B, H\}, \item \{J, E, F, P, M\}, \item \{J, F, A, B, O\}, \item \{J, F, A, B, Q\}, \item \{J, F, A, B, L\}, \item \{J, F, K, B, H\}, \item \{J, F, K, P, M\}, \item \{J, F, K, O, G\}, \item \{J, F, K, Q, L\}, \item \{J, F, K, G, L\}, \item \{J, F, B, P, M\}, \item \{J, F, P, O, M\}, \item \{J, F, P, Q, M\}, \item \{J, F, P, G, M\}, \item \{J, F, P, M, L\}, \item \{J, F, O, Q, G\}, \item \{J, F, O, Q, L\}, \item \{J, A, P, O, Q\}, \item \{J, A, P, D, Q\}, \item \{J, A, P, D, G\}, \item \{J, A, P, D, L\}, \item \{J, A, P, Q, L\}, \item \{J, A, O, D, G\}, \item \{J, A, O, Q, L\}, \item \{J, A, D, Q, L\}, \item \{J, A, D, G, L\}, \item \{J, P, O, Q, G\}, \item \{J, P, O, Q, L\}, \item \{J, O, Q, G, L\}, \item \{C, E, A, B, O\}, \item \{C, E, A, B, Q\}, \item \{C, E, A, O, Q\}, \item \{C, E, A, D, Q\}, \item \{C, E, A, D, G\}, \item \{C, E, K, O, G\}, \item \{C, E, O, Q, G\}, \item \{C, F, A, B, O\}, \item \{C, F, A, B, Q\}, \item \{C, F, A, B, I\}, \item \{C, F, A, B, M\}, \item \{C, F, K, O, G\}, \item \{C, F, K, Q, M\}, \item \{C, F, K, I, G\}, \item \{C, F, K, I, M\}, \item \{C, F, K, I, L\}, \item \{C, F, K, G, M\}, \item \{C, F, O, Q, G\}, \item \{C, F, O, Q, M\}, \item \{C, F, O, I, G\}, \item \{C, F, O, I, M\}, \item \{C, F, O, G, M\}, \item \{C, F, Q, I, L\}, \item \{C, F, I, G, M\}, \item \{C, F, I, G, L\}, \item \{C, F, I, M, L\}, \item \{C, A, B, H, D\}, \item \{C, A, B, O, Q\}, \item \{C, A, B, O, I\}, \item \{C, A, B, I, L\}, \item \{C, A, O, D, G\}, \item \{C, A, D, I, G\}, \item \{C, A, D, I, L\}, \item \{C, A, Q, I, L\}, \item \{C, K, O, Q, G\}, \item \{C, K, O, I, G\}, \item \{C, K, Q, I, L\}, \item \{C, K, I, G, L\}, \item \{C, O, Q, I, L\}, \item \{F, A, B, H, D\}, \item \{F, A, B, O, Q\}, \item \{F, A, B, O, M\}, \item \{F, A, B, Q, M\}, \item \{F, A, B, Q, L\}, \item \{F, A, B, M, L\}, \item \{F, K, B, H, M\}, \item \{F, K, O, Q, G\}, \item \{F, K, O, G, M\}, \item \{F, K, Q, M, L\}, \item \{F, K, G, M, L\}, \item \{F, O, Q, G, M\}, \item \{F, O, Q, G, L\}, \item \{F, O, Q, M, L\}, \item \{A, K, B, H, D\}, \item \{A, B, H, D, Q\}, \item \{A, B, H, D, G\}, \item \{A, B, H, D, L\}, \item \{A, B, O, Q, L\}, \item \{A, O, D, Q, G\}, \item \{K, P, H, D, M\}, \item \{K, P, O, G, M\}, \item \{K, P, Q, M, L\}, \item \{K, P, G, M, L\}, \item \{K, H, O, D, G\}, \item \{K, H, D, Q, M\}, \item \{K, H, D, Q, L\}, \item \{K, H, D, I, G\}, \item \{K, H, D, I, M\}, \item \{K, H, D, I, L\}, \item \{K, H, D, G, M\}, \item \{K, H, D, G, L\}, \item \{K, H, D, M, L\}, \item \{K, O, Q, G, M\}, \item \{K, O, Q, G, L\}, \item \{K, O, I, G, M\}, \item \{K, Q, I, M, L\}, \item \{K, I, G, M, L\}, \item \{P, H, D, Q, M\}, \item \{P, H, D, G, M\}, \item \{P, H, D, M, L\}, \item \{P, O, D, Q, G\}, \item \{P, O, Q, G, M\}, \item \{P, O, Q, G, L\}, \item \{P, O, Q, M, L\}, \item \{H, O, D, Q, G\}, \item \{H, O, D, I, G\}, \item \{H, O, D, G, M\}, \item \{H, D, Q, I, L\}, \item \{H, D, Q, M, L\}, \item \{H, D, I, G, M\}, \item \{H, D, I, G, L\}, \item \{H, D, I, M, L\}, \item \{H, D, G, M, L\}, \item \{O, D, Q, G, L\}, \item \{O, Q, I, G, L\}, \item \{O, Q, I, M, L\}, \item \{O, Q, G, M, L\}, \item \{N, J, C, E, F, I\}, \item \{N, J, C, E, A, I\}, \item \{N, J, C, E, P, I\}, \item \{N, J, C, E, O, I\}, \item \{N, J, C, E, I, G\}, \item \{N, J, C, E, I, L\}, \item \{N, J, E, F, B, H\}, \item \{N, J, E, F, P, M\}, \item \{N, C, E, A, B, I\}, \item \{N, C, E, A, O, I\}, \item \{N, C, E, A, D, I\}, \item \{N, C, E, A, I, L\}, \item \{N, C, E, K, I, G\}, \item \{N, C, E, K, I, L\}, \item \{N, C, E, O, I, G\}, \item \{N, C, E, Q, I, L\}, \item \{N, C, E, I, G, L\}, \item \{N, K, P, I, G, M\}, \item \{N, K, P, I, M, L\}, \item \{N, K, H, D, I, G\}, \item \{N, K, H, D, I, L\}, \item \{N, P, H, D, I, M\}, \item \{N, P, O, D, I, G\}, \item \{N, P, O, Q, I, L\}, \item \{N, P, O, I, G, M\}, \item \{N, P, D, Q, I, L\}, \item \{N, P, D, I, G, L\}, \item \{N, P, Q, I, M, L\}, \item \{N, P, I, G, M, L\}, \item \{N, H, O, D, I, G\}, \item \{N, H, D, Q, I, L\}, \item \{N, H, D, I, G, L\}, \item \{N, O, Q, I, G, L\}, \item \{J, C, E, F, A, B\}, \item \{J, C, E, F, K, Q\}, \item \{J, C, E, F, K, G\}, \item \{J, C, E, F, P, M\}, \item \{J, C, E, F, O, Q\}, \item \{J, C, E, F, O, G\}, \item \{J, C, E, A, P, O\}, \item \{J, C, E, A, P, D\}, \item \{J, C, E, A, P, Q\}, \item \{J, C, E, A, O, Q\}, \item \{J, C, E, A, D, Q\}, \item \{J, C, E, A, D, G\}, \item \{J, C, E, P, O, Q\}, \item \{J, C, E, P, O, G\}, \item \{J, C, E, O, Q, G\}, \item \{J, E, F, K, B, H\}, \item \{J, E, F, K, P, M\}, \item \{J, E, F, B, P, M\}, \item \{J, F, A, B, P, M\}, \item \{J, F, A, B, H, D\}, \item \{J, F, A, B, O, Q\}, \item \{J, F, A, B, Q, L\}, \item \{J, F, K, P, Q, M\}, \item \{J, F, K, P, G, M\}, \item \{J, F, K, P, M, L\}, \item \{J, F, K, O, Q, G\}, \item \{J, F, B, P, H, M\}, \item \{J, F, P, O, Q, M\}, \item \{J, F, P, O, G, M\}, \item \{J, F, P, Q, M, L\}, \item \{J, F, P, G, M, L\}, \item \{J, F, O, Q, G, L\}, \item \{J, A, P, O, D, G\}, \item \{J, A, P, O, Q, L\}, \item \{J, A, P, D, Q, L\}, \item \{J, A, P, D, G, L\}, \item \{J, A, O, D, Q, G\}, \item \{J, P, O, Q, G, L\}, \item \{C, E, A, B, H, D\}, \item \{C, E, A, B, O, Q\}, \item \{C, E, A, O, D, G\}, \item \{C, E, K, O, Q, G\}, \item \{C, F, A, B, H, D\}, \item \{C, F, A, B, O, Q\}, \item \{C, F, A, B, O, I\}, \item \{C, F, A, B, O, M\}, \item \{C, F, A, B, Q, M\}, \item \{C, F, A, B, I, M\}, \item \{C, F, A, B, I, L\}, \item \{C, F, K, O, Q, G\}, \item \{C, F, K, O, I, G\}, \item \{C, F, K, O, G, M\}, \item \{C, F, K, Q, I, L\}, \item \{C, F, K, I, G, M\}, \item \{C, F, K, I, G, L\}, \item \{C, F, K, I, M, L\}, \item \{C, F, O, Q, I, L\}, \item \{C, F, O, Q, G, M\}, \item \{C, F, O, I, G, M\}, \item \{C, F, Q, I, M, L\}, \item \{C, F, I, G, M, L\}, \item \{C, A, K, B, H, D\}, \item \{C, A, B, H, D, Q\}, \item \{C, A, B, H, D, I\}, \item \{C, A, B, H, D, G\}, \item \{C, A, B, Q, I, L\}, \item \{C, A, O, D, Q, G\}, \item \{C, A, O, D, I, G\}, \item \{C, A, O, Q, I, L\}, \item \{C, A, D, Q, I, L\}, \item \{C, A, D, I, G, L\}, \item \{C, O, Q, I, G, L\}, \item \{F, A, K, B, H, D\}, \item \{F, A, B, H, D, Q\}, \item \{F, A, B, H, D, G\}, \item \{F, A, B, H, D, M\}, \item \{F, A, B, H, D, L\}, \item \{F, A, B, O, Q, M\}, \item \{F, A, B, O, Q, L\}, \item \{F, A, B, Q, M, L\}, \item \{F, K, O, Q, G, M\}, \item \{F, K, O, Q, G, L\}, \item \{F, O, Q, G, M, L\}, \item \{A, K, B, H, D, Q\}, \item \{A, K, B, H, D, G\}, \item \{A, K, B, H, D, L\}, \item \{A, B, H, O, D, G\}, \item \{A, B, H, D, Q, L\}, \item \{A, B, H, D, G, L\}, \item \{A, O, D, Q, G, L\}, \item \{K, P, H, D, Q, M\}, \item \{K, P, H, D, G, M\}, \item \{K, P, H, D, M, L\}, \item \{K, P, O, Q, G, M\}, \item \{K, H, O, D, Q, G\}, \item \{K, H, O, D, I, G\}, \item \{K, H, O, D, G, M\}, \item \{K, H, D, Q, I, L\}, \item \{K, H, D, Q, M, L\}, \item \{K, H, D, I, G, M\}, \item \{K, H, D, I, G, L\}, \item \{K, H, D, I, M, L\}, \item \{K, H, D, G, M, L\}, \item \{K, O, Q, I, G, L\}, \item \{K, O, Q, G, M, L\}, \item \{P, H, O, D, G, M\}, \item \{P, H, D, Q, M, L\}, \item \{P, H, D, G, M, L\}, \item \{P, O, D, Q, G, L\}, \item \{P, O, Q, G, M, L\}, \item \{H, O, D, Q, G, M\}, \item \{H, O, D, Q, G, L\}, \item \{H, O, D, I, G, M\}, \item \{H, D, Q, I, M, L\}, \item \{H, D, I, G, M, L\}, \item \{O, D, Q, I, G, L\}, \item \{O, Q, I, G, M, L\}, \item \{N, J, C, E, F, K, I\}, \item \{N, J, C, E, F, O, I\}, \item \{N, J, C, E, F, I, G\}, \item \{N, J, C, E, F, I, L\}, \item \{N, J, C, E, A, P, I\}, \item \{N, J, C, E, A, O, I\}, \item \{N, J, C, E, A, D, I\}, \item \{N, J, C, E, A, I, L\}, \item \{N, J, C, E, P, O, I\}, \item \{N, J, C, E, P, I, G\}, \item \{N, J, C, E, P, I, L\}, \item \{N, J, C, E, O, I, G\}, \item \{N, J, C, E, Q, I, L\}, \item \{N, J, C, E, I, G, L\}, \item \{N, J, E, F, K, B, H\}, \item \{N, J, E, F, K, P, M\}, \item \{N, J, E, F, B, P, M\}, \item \{N, C, E, A, B, O, I\}, \item \{N, C, E, A, B, I, L\}, \item \{N, C, E, A, D, I, G\}, \item \{N, C, E, A, D, I, L\}, \item \{N, C, E, A, Q, I, L\}, \item \{N, C, E, K, O, I, G\}, \item \{N, C, E, K, Q, I, L\}, \item \{N, C, E, K, I, G, L\}, \item \{N, C, E, O, Q, I, L\}, \item \{N, K, P, H, D, I, M\}, \item \{N, K, P, O, I, G, M\}, \item \{N, K, P, Q, I, M, L\}, \item \{N, K, P, I, G, M, L\}, \item \{N, K, H, O, D, I, G\}, \item \{N, K, H, D, Q, I, L\}, \item \{N, K, H, D, I, G, L\}, \item \{N, K, O, Q, I, G, L\}, \item \{N, P, H, D, I, G, M\}, \item \{N, P, H, D, I, M, L\}, \item \{N, P, O, Q, I, G, L\}, \item \{N, P, O, Q, I, M, L\}, \item \{N, O, D, Q, I, G, L\}, \item \{J, C, E, F, A, B, O\}, \item \{J, C, E, F, A, B, Q\}, \item \{J, C, E, F, K, P, M\}, \item \{J, C, E, F, K, O, G\}, \item \{J, C, E, F, P, O, M\}, \item \{J, C, E, F, P, Q, M\}, \item \{J, C, E, F, P, G, M\}, \item \{J, C, E, F, O, Q, G\}, \item \{J, C, E, A, P, O, Q\}, \item \{J, C, E, A, P, D, Q\}, \item \{J, C, E, A, P, D, G\}, \item \{J, C, E, A, O, D, G\}, \item \{J, C, E, P, O, Q, G\}, \item \{J, E, F, B, P, H, M\}, \item \{J, F, A, K, B, H, D\}, \item \{J, F, A, B, P, O, M\}, \item \{J, F, A, B, P, Q, M\}, \item \{J, F, A, B, P, M, L\}, \item \{J, F, A, B, H, D, Q\}, \item \{J, F, A, B, H, D, G\}, \item \{J, F, A, B, H, D, L\}, \item \{J, F, A, B, O, Q, L\}, \item \{J, F, K, B, P, H, M\}, \item \{J, F, K, P, O, G, M\}, \item \{J, F, K, P, Q, M, L\}, \item \{J, F, K, P, G, M, L\}, \item \{J, F, K, O, Q, G, L\}, \item \{J, F, P, O, Q, G, M\}, \item \{J, F, P, O, Q, M, L\}, \item \{J, A, P, O, D, Q, G\}, \item \{J, A, O, D, Q, G, L\}, \item \{C, E, A, K, B, H, D\}, \item \{C, E, A, B, H, D, Q\}, \item \{C, E, A, B, H, D, G\}, \item \{C, E, A, O, D, Q, G\}, \item \{C, F, A, K, B, H, D\}, \item \{C, F, A, B, H, D, Q\}, \item \{C, F, A, B, H, D, I\}, \item \{C, F, A, B, H, D, G\}, \item \{C, F, A, B, H, D, M\}, \item \{C, F, A, B, O, Q, M\}, \item \{C, F, A, B, O, I, M\}, \item \{C, F, A, B, Q, I, L\}, \item \{C, F, A, B, I, M, L\}, \item \{C, F, K, O, Q, G, M\}, \item \{C, F, K, O, I, G, M\}, \item \{C, F, K, Q, I, M, L\}, \item \{C, F, K, I, G, M, L\}, \item \{C, F, O, Q, I, G, L\}, \item \{C, F, O, Q, I, M, L\}, \item \{C, A, K, B, H, D, Q\}, \item \{C, A, K, B, H, D, I\}, \item \{C, A, K, B, H, D, G\}, \item \{C, A, B, H, O, D, G\}, \item \{C, A, B, H, D, I, G\}, \item \{C, A, B, H, D, I, L\}, \item \{C, A, B, O, Q, I, L\}, \item \{C, K, O, Q, I, G, L\}, \item \{F, A, K, B, H, D, Q\}, \item \{F, A, K, B, H, D, G\}, \item \{F, A, K, B, H, D, M\}, \item \{F, A, K, B, H, D, L\}, \item \{F, A, B, H, O, D, G\}, \item \{F, A, B, H, D, Q, M\}, \item \{F, A, B, H, D, Q, L\}, \item \{F, A, B, H, D, G, M\}, \item \{F, A, B, H, D, G, L\}, \item \{F, A, B, H, D, M, L\}, \item \{F, A, B, O, Q, M, L\}, \item \{F, K, O, Q, G, M, L\}, \item \{A, K, B, H, O, D, G\}, \item \{A, K, B, H, D, Q, L\}, \item \{A, K, B, H, D, G, L\}, \item \{A, B, H, O, D, Q, G\}, \item \{K, P, H, O, D, G, M\}, \item \{K, P, H, D, Q, M, L\}, \item \{K, P, H, D, G, M, L\}, \item \{K, P, O, Q, G, M, L\}, \item \{K, H, O, D, Q, G, M\}, \item \{K, H, O, D, Q, G, L\}, \item \{K, H, O, D, I, G, M\}, \item \{K, H, D, Q, I, M, L\}, \item \{K, H, D, I, G, M, L\}, \item \{K, O, Q, I, G, M, L\}, \item \{P, H, O, D, Q, G, M\}, \item \{H, O, D, Q, I, G, L\}, \item \{H, O, D, Q, G, M, L\}, \item \{N, J, C, E, F, A, B, I\}, \item \{N, J, C, E, F, K, I, G\}, \item \{N, J, C, E, F, K, I, L\}, \item \{N, J, C, E, F, P, I, M\}, \item \{N, J, C, E, F, O, I, G\}, \item \{N, J, C, E, F, Q, I, L\}, \item \{N, J, C, E, F, I, G, L\}, \item \{N, J, C, E, A, P, O, I\}, \item \{N, J, C, E, A, P, D, I\}, \item \{N, J, C, E, A, P, I, L\}, \item \{N, J, C, E, A, D, I, G\}, \item \{N, J, C, E, A, D, I, L\}, \item \{N, J, C, E, A, Q, I, L\}, \item \{N, J, C, E, P, O, I, G\}, \item \{N, J, C, E, P, Q, I, L\}, \item \{N, J, C, E, P, I, G, L\}, \item \{N, J, C, E, O, Q, I, L\}, \item \{N, J, E, F, B, P, H, M\}, \item \{N, C, E, A, B, H, D, I\}, \item \{N, C, E, A, B, Q, I, L\}, \item \{N, C, E, A, O, D, I, G\}, \item \{N, C, E, A, O, Q, I, L\}, \item \{N, C, E, A, D, Q, I, L\}, \item \{N, C, E, A, D, I, G, L\}, \item \{N, C, E, O, Q, I, G, L\}, \item \{N, K, P, H, D, I, G, M\}, \item \{N, K, P, H, D, I, M, L\}, \item \{N, P, H, O, D, I, G, M\}, \item \{N, P, H, D, Q, I, M, L\}, \item \{N, P, H, D, I, G, M, L\}, \item \{N, P, O, D, Q, I, G, L\}, \item \{N, P, O, Q, I, G, M, L\}, \item \{N, H, O, D, Q, I, G, L\}, \item \{J, C, E, F, A, B, P, M\}, \item \{J, C, E, F, A, B, H, D\}, \item \{J, C, E, F, A, B, O, Q\}, \item \{J, C, E, F, K, P, Q, M\}, \item \{J, C, E, F, K, P, G, M\}, \item \{J, C, E, F, K, O, Q, G\}, \item \{J, C, E, F, P, O, Q, M\}, \item \{J, C, E, F, P, O, G, M\}, \item \{J, C, E, A, P, O, D, G\}, \item \{J, C, E, A, O, D, Q, G\}, \item \{J, E, F, K, B, P, H, M\}, \item \{J, F, A, K, B, H, D, Q\}, \item \{J, F, A, K, B, H, D, G\}, \item \{J, F, A, K, B, H, D, L\}, \item \{J, F, A, B, P, H, D, M\}, \item \{J, F, A, B, P, O, Q, M\}, \item \{J, F, A, B, P, Q, M, L\}, \item \{J, F, A, B, H, O, D, G\}, \item \{J, F, A, B, H, D, Q, L\}, \item \{J, F, A, B, H, D, G, L\}, \item \{J, F, K, P, O, Q, G, M\}, \item \{J, F, P, O, Q, G, M, L\}, \item \{J, A, P, O, D, Q, G, L\}, \item \{C, E, A, K, B, H, D, Q\}, \item \{C, E, A, K, B, H, D, G\}, \item \{C, E, A, B, H, O, D, G\}, \item \{C, F, A, K, B, H, D, Q\}, \item \{C, F, A, K, B, H, D, I\}, \item \{C, F, A, K, B, H, D, G\}, \item \{C, F, A, K, B, H, D, M\}, \item \{C, F, A, B, H, O, D, G\}, \item \{C, F, A, B, H, D, Q, M\}, \item \{C, F, A, B, H, D, I, G\}, \item \{C, F, A, B, H, D, I, M\}, \item \{C, F, A, B, H, D, I, L\}, \item \{C, F, A, B, H, D, G, M\}, \item \{C, F, A, B, O, Q, I, L\}, \item \{C, F, A, B, Q, I, M, L\}, \item \{C, F, K, O, Q, I, G, L\}, \item \{C, F, O, Q, I, G, M, L\}, \item \{C, A, K, B, H, O, D, G\}, \item \{C, A, K, B, H, D, I, G\}, \item \{C, A, K, B, H, D, I, L\}, \item \{C, A, B, H, O, D, Q, G\}, \item \{C, A, B, H, O, D, I, G\}, \item \{C, A, B, H, D, Q, I, L\}, \item \{C, A, B, H, D, I, G, L\}, \item \{C, A, O, D, Q, I, G, L\}, \item \{F, A, K, B, H, O, D, G\}, \item \{F, A, K, B, H, D, Q, M\}, \item \{F, A, K, B, H, D, Q, L\}, \item \{F, A, K, B, H, D, G, M\}, \item \{F, A, K, B, H, D, G, L\}, \item \{F, A, K, B, H, D, M, L\}, \item \{F, A, B, H, O, D, Q, G\}, \item \{F, A, B, H, O, D, G, M\}, \item \{F, A, B, H, D, Q, M, L\}, \item \{F, A, B, H, D, G, M, L\}, \item \{A, K, B, H, O, D, Q, G\}, \item \{A, B, H, O, D, Q, G, L\}, \item \{K, P, H, O, D, Q, G, M\}, \item \{K, H, O, D, Q, I, G, L\}, \item \{K, H, O, D, Q, G, M, L\}, \item \{P, H, O, D, Q, G, M, L\}, \item \{H, O, D, Q, I, G, M, L\}, \item \{N, J, C, E, F, A, B, O, I\}, \item \{N, J, C, E, F, A, B, I, L\}, \item \{N, J, C, E, F, K, P, I, M\}, \item \{N, J, C, E, F, K, O, I, G\}, \item \{N, J, C, E, F, K, Q, I, L\}, \item \{N, J, C, E, F, K, I, G, L\}, \item \{N, J, C, E, F, P, O, I, M\}, \item \{N, J, C, E, F, P, I, G, M\}, \item \{N, J, C, E, F, P, I, M, L\}, \item \{N, J, C, E, F, O, Q, I, L\}, \item \{N, J, C, E, A, P, D, I, G\}, \item \{N, J, C, E, A, P, D, I, L\}, \item \{N, J, C, E, A, P, Q, I, L\}, \item \{N, J, C, E, A, O, D, I, G\}, \item \{N, J, C, E, A, O, Q, I, L\}, \item \{N, J, C, E, A, D, Q, I, L\}, \item \{N, J, C, E, A, D, I, G, L\}, \item \{N, J, C, E, P, O, Q, I, L\}, \item \{N, J, C, E, O, Q, I, G, L\}, \item \{N, J, E, F, K, B, P, H, M\}, \item \{N, C, E, A, K, B, H, D, I\}, \item \{N, C, E, A, B, H, D, I, G\}, \item \{N, C, E, A, B, H, D, I, L\}, \item \{N, C, E, A, B, O, Q, I, L\}, \item \{N, C, E, K, O, Q, I, G, L\}, \item \{N, K, P, H, O, D, I, G, M\}, \item \{N, K, P, H, D, Q, I, M, L\}, \item \{N, K, P, H, D, I, G, M, L\}, \item \{N, K, P, O, Q, I, G, M, L\}, \item \{N, K, H, O, D, Q, I, G, L\}, \item \{J, C, E, F, A, K, B, H, D\}, \item \{J, C, E, F, A, B, P, O, M\}, \item \{J, C, E, F, A, B, P, Q, M\}, \item \{J, C, E, F, A, B, H, D, Q\}, \item \{J, C, E, F, A, B, H, D, G\}, \item \{J, C, E, F, K, P, O, G, M\}, \item \{J, C, E, F, P, O, Q, G, M\}, \item \{J, C, E, A, P, O, D, Q, G\}, \item \{J, F, A, K, B, P, H, D, M\}, \item \{J, F, A, K, B, H, O, D, G\}, \item \{J, F, A, K, B, H, D, Q, L\}, \item \{J, F, A, K, B, H, D, G, L\}, \item \{J, F, A, B, P, H, D, Q, M\}, \item \{J, F, A, B, P, H, D, G, M\}, \item \{J, F, A, B, P, H, D, M, L\}, \item \{J, F, A, B, P, O, Q, M, L\}, \item \{J, F, A, B, H, O, D, Q, G\}, \item \{J, F, K, P, O, Q, G, M, L\}, \item \{C, E, A, K, B, H, O, D, G\}, \item \{C, E, A, B, H, O, D, Q, G\}, \item \{C, F, A, K, B, H, O, D, G\}, \item \{C, F, A, K, B, H, D, Q, M\}, \item \{C, F, A, K, B, H, D, I, G\}, \item \{C, F, A, K, B, H, D, I, M\}, \item \{C, F, A, K, B, H, D, I, L\}, \item \{C, F, A, K, B, H, D, G, M\}, \item \{C, F, A, B, H, O, D, Q, G\}, \item \{C, F, A, B, H, O, D, I, G\}, \item \{C, F, A, B, H, O, D, G, M\}, \item \{C, F, A, B, H, D, Q, I, L\}, \item \{C, F, A, B, H, D, I, G, M\}, \item \{C, F, A, B, H, D, I, G, L\}, \item \{C, F, A, B, H, D, I, M, L\}, \item \{C, F, A, B, O, Q, I, M, L\}, \item \{C, F, K, O, Q, I, G, M, L\}, \item \{C, A, K, B, H, O, D, Q, G\}, \item \{C, A, K, B, H, O, D, I, G\}, \item \{C, A, K, B, H, D, Q, I, L\}, \item \{C, A, K, B, H, D, I, G, L\}, \item \{F, A, K, B, H, O, D, Q, G\}, \item \{F, A, K, B, H, O, D, G, M\}, \item \{F, A, K, B, H, D, Q, M, L\}, \item \{F, A, K, B, H, D, G, M, L\}, \item \{F, A, B, H, O, D, Q, G, M\}, \item \{F, A, B, H, O, D, Q, G, L\}, \item \{A, K, B, H, O, D, Q, G, L\}, \item \{K, P, H, O, D, Q, G, M, L\}, \item \{K, H, O, D, Q, I, G, M, L\}, \item \{N, J, C, E, F, A, B, P, I, M\}, \item \{N, J, C, E, F, A, B, H, D, I\}, \item \{N, J, C, E, F, A, B, Q, I, L\}, \item \{N, J, C, E, F, K, P, I, G, M\}, \item \{N, J, C, E, F, K, P, I, M, L\}, \item \{N, J, C, E, F, P, O, I, G, M\}, \item \{N, J, C, E, F, P, Q, I, M, L\}, \item \{N, J, C, E, F, P, I, G, M, L\}, \item \{N, J, C, E, F, O, Q, I, G, L\}, \item \{N, J, C, E, A, P, O, D, I, G\}, \item \{N, J, C, E, A, P, O, Q, I, L\}, \item \{N, J, C, E, A, P, D, Q, I, L\}, \item \{N, J, C, E, A, P, D, I, G, L\}, \item \{N, J, C, E, P, O, Q, I, G, L\}, \item \{N, C, E, A, K, B, H, D, I, G\}, \item \{N, C, E, A, K, B, H, D, I, L\}, \item \{N, C, E, A, B, H, O, D, I, G\}, \item \{N, C, E, A, B, H, D, Q, I, L\}, \item \{N, C, E, A, B, H, D, I, G, L\}, \item \{N, C, E, A, O, D, Q, I, G, L\}, \item \{N, P, H, O, D, Q, I, G, M, L\}, \item \{J, C, E, F, A, K, B, H, D, Q\}, \item \{J, C, E, F, A, K, B, H, D, G\}, \item \{J, C, E, F, A, B, P, H, D, M\}, \item \{J, C, E, F, A, B, P, O, Q, M\}, \item \{J, C, E, F, A, B, H, O, D, G\}, \item \{J, C, E, F, K, P, O, Q, G, M\}, \item \{J, F, A, K, B, P, H, D, Q, M\}, \item \{J, F, A, K, B, P, H, D, G, M\}, \item \{J, F, A, K, B, P, H, D, M, L\}, \item \{J, F, A, K, B, H, O, D, Q, G\}, \item \{J, F, A, B, P, H, O, D, G, M\}, \item \{J, F, A, B, P, H, D, Q, M, L\}, \item \{J, F, A, B, P, H, D, G, M, L\}, \item \{J, F, A, B, H, O, D, Q, G, L\}, \item \{C, E, A, K, B, H, O, D, Q, G\}, \item \{C, F, A, K, B, H, O, D, Q, G\}, \item \{C, F, A, K, B, H, O, D, I, G\}, \item \{C, F, A, K, B, H, O, D, G, M\}, \item \{C, F, A, K, B, H, D, Q, I, L\}, \item \{C, F, A, K, B, H, D, I, G, M\}, \item \{C, F, A, K, B, H, D, I, G, L\}, \item \{C, F, A, K, B, H, D, I, M, L\}, \item \{C, F, A, B, H, O, D, Q, G, M\}, \item \{C, F, A, B, H, O, D, I, G, M\}, \item \{C, F, A, B, H, D, Q, I, M, L\}, \item \{C, F, A, B, H, D, I, G, M, L\}, \item \{C, A, B, H, O, D, Q, I, G, L\}, \item \{F, A, K, B, H, O, D, Q, G, M\}, \item \{F, A, K, B, H, O, D, Q, G, L\}, \item \{F, A, B, H, O, D, Q, G, M, L\}, \item \{N, J, C, E, F, A, K, B, H, D, I\}, \item \{N, J, C, E, F, A, B, P, O, I, M\}, \item \{N, J, C, E, F, A, B, P, I, M, L\}, \item \{N, J, C, E, F, A, B, H, D, I, G\}, \item \{N, J, C, E, F, A, B, H, D, I, L\}, \item \{N, J, C, E, F, A, B, O, Q, I, L\}, \item \{N, J, C, E, F, K, P, O, I, G, M\}, \item \{N, J, C, E, F, K, P, Q, I, M, L\}, \item \{N, J, C, E, F, K, P, I, G, M, L\}, \item \{N, J, C, E, F, K, O, Q, I, G, L\}, \item \{N, J, C, E, F, P, O, Q, I, M, L\}, \item \{N, J, C, E, A, O, D, Q, I, G, L\}, \item \{N, C, E, A, K, B, H, O, D, I, G\}, \item \{N, C, E, A, K, B, H, D, Q, I, L\}, \item \{N, C, E, A, K, B, H, D, I, G, L\}, \item \{N, K, P, H, O, D, Q, I, G, M, L\}, \item \{J, C, E, F, A, K, B, P, H, D, M\}, \item \{J, C, E, F, A, K, B, H, O, D, G\}, \item \{J, C, E, F, A, B, P, H, D, Q, M\}, \item \{J, C, E, F, A, B, P, H, D, G, M\}, \item \{J, C, E, F, A, B, H, O, D, Q, G\}, \item \{J, F, A, K, B, P, H, O, D, G, M\}, \item \{J, F, A, K, B, P, H, D, Q, M, L\}, \item \{J, F, A, K, B, P, H, D, G, M, L\}, \item \{J, F, A, K, B, H, O, D, Q, G, L\}, \item \{J, F, A, B, P, H, O, D, Q, G, M\}, \item \{C, F, A, K, B, H, O, D, Q, G, M\}, \item \{C, F, A, K, B, H, O, D, I, G, M\}, \item \{C, F, A, K, B, H, D, Q, I, M, L\}, \item \{C, F, A, K, B, H, D, I, G, M, L\}, \item \{C, F, A, B, H, O, D, Q, I, G, L\}, \item \{C, A, K, B, H, O, D, Q, I, G, L\}, \item \{F, A, K, B, H, O, D, Q, G, M, L\}, \item \{N, J, C, E, F, A, K, B, H, D, I, G\}, \item \{N, J, C, E, F, A, K, B, H, D, I, L\}, \item \{N, J, C, E, F, A, B, P, H, D, I, M\}, \item \{N, J, C, E, F, A, B, P, Q, I, M, L\}, \item \{N, J, C, E, F, A, B, H, O, D, I, G\}, \item \{N, J, C, E, F, A, B, H, D, Q, I, L\}, \item \{N, J, C, E, F, A, B, H, D, I, G, L\}, \item \{N, J, C, E, F, P, O, Q, I, G, M, L\}, \item \{N, J, C, E, A, P, O, D, Q, I, G, L\}, \item \{N, C, E, A, B, H, O, D, Q, I, G, L\}, \item \{J, C, E, F, A, K, B, P, H, D, Q, M\}, \item \{J, C, E, F, A, K, B, P, H, D, G, M\}, \item \{J, C, E, F, A, K, B, H, O, D, Q, G\}, \item \{J, C, E, F, A, B, P, H, O, D, G, M\}, \item \{J, F, A, K, B, P, H, O, D, Q, G, M\}, \item \{J, F, A, B, P, H, O, D, Q, G, M, L\}, \item \{C, F, A, K, B, H, O, D, Q, I, G, L\}, \item \{C, F, A, B, H, O, D, Q, I, G, M, L\}, \item \{N, J, C, E, F, A, K, B, P, H, D, I, M\}, \item \{N, J, C, E, F, A, K, B, H, O, D, I, G\}, \item \{N, J, C, E, F, A, K, B, H, D, Q, I, L\}, \item \{N, J, C, E, F, A, K, B, H, D, I, G, L\}, \item \{N, J, C, E, F, A, B, P, H, D, I, G, M\}, \item \{N, J, C, E, F, A, B, P, H, D, I, M, L\}, \item \{N, J, C, E, F, A, B, P, O, Q, I, M, L\}, \item \{N, J, C, E, F, K, P, O, Q, I, G, M, L\}, \item \{N, C, E, A, K, B, H, O, D, Q, I, G, L\}, \item \{J, C, E, F, A, K, B, P, H, O, D, G, M\}, \item \{J, C, E, F, A, B, P, H, O, D, Q, G, M\}, \item \{J, F, A, K, B, P, H, O, D, Q, G, M, L\}, \item \{C, F, A, K, B, H, O, D, Q, I, G, M, L\}, \item \{N, J, C, E, F, A, K, B, P, H, D, I, G, M\}, \item \{N, J, C, E, F, A, K, B, P, H, D, I, M, L\}, \item \{N, J, C, E, F, A, B, P, H, O, D, I, G, M\}, \item \{N, J, C, E, F, A, B, P, H, D, Q, I, M, L\}, \item \{N, J, C, E, F, A, B, P, H, D, I, G, M, L\}, \item \{N, J, C, E, F, A, B, H, O, D, Q, I, G, L\}, \item \{J, C, E, F, A, K, B, P, H, O, D, Q, G, M\}, \item \{N, J, C, E, F, A, K, B, P, H, O, D, I, G, M\}, \item \{N, J, C, E, F, A, K, B, P, H, D, Q, I, M, L\}, \item \{N, J, C, E, F, A, K, B, P, H, D, I, G, M, L\}, \item \{N, J, C, E, F, A, K, B, H, O, D, Q, I, G, L\}, \item \{N, J, C, E, F, A, B, P, H, O, D, Q, I, G, M, L\}, \item \{N, J, C, E, F, A, K, B, P, H, O, D, Q, I, G, M, L\}
\end{enumerate}
\end{multicols}

\section{Python code checking for contradictions of Type III} \label{sec:contradiction-code-2}

This code takes the $1124$ sets which can still be alternation sets (`CurrentAltSets1') and discards any sets with contradictions of Type III.
\begin{lstlisting}[language=python]
import itertools
from updated_alt_sets import currentAltSets1

bigToSmall = {
    "A" : {"a", "d", "j"}, "B" : {"b", "d", "j"}, "C" : {"a", "e", "j"},
    "D" : {"a", "d", "l"}, "E" : {"c", "e", "j"}, "F" : {"b", "f", "j"},
    "G" : {"a", "g", "l"}, "H" : {"b", "d", "l"}, "I" : {"a", "e", "o"},
    "J" : {"c", "f", "j"}, "K" : {"b", "h", "l"},
    "L" : {"a", "i", "o"}, "M" : {"b", "f", "p"}, "N" : {"c", "e", "o"},
    "O" : {"a", "g", "q"}, "P" : {"c", "f", "p"}, "Q" : {"a", "i", "q"},
}
#contradictions
zeroToOne = {
    "b":{"a"}, "c":{"a", "b", "f"}, "e":{"d"}, "f":{"d", "e"},
    "g":{"d","e"}, "h":{"d","e","f","g"}, "i":{"d","e","g"}, "l":{"j"},
    "o":{"j","l"}, "p":{"j","l","o"}, "q":{"j","l","o","i"}, "a":set(),
    "d": set(), "j": set(), "p": set(),
    }
allElementList = ["A", "B", "C", "D", "E", "F", "G", "H", "I", "J", "K", "L", "M", "N",    "O", "P", "Q"]

#turns any imputed set of big letters (summation term) into small ones (conditions)
def takeBigToSmall(combination):
    allSmall = set()
    for element in combination:
        allSmall = allSmall.union(bigToSmall[element])
    return allSmall

#gives small letters (conditions) that need to be >= 0 or there's a contradiction
def giveContradictions(combination):
    allSmall = takeBigToSmall(combination)
    #we are assuming this combination is included in the alt set
    smallContradictions = set() #these also need to be >=0
    for small in allSmall:
        smallContradictions = smallContradictions.union(zeroToOne[small])
    if {"a","g"}.issubset(allSmall):
        smallContradictions = smallContradictions.union("t")
    return smallContradictions

def giveNeeded(combination):
    #gettin all 3-element subsets of the 'contradicting' small leters (conditions)
    contraSubsets = list(itertools.combinations(giveContradictions(combination),3))
    needed = []
    #turning tuples into sets
    contraChangedSubsets = list(set(subset) for subset in contraSubsets)
    #check what big letters (summation terms) are needed
    for subset in contraChangedSubsets:
        for element in allElementList:
            if subset == bigToSmall[element] and element not in combination:
                needed.append(element)
    #account for s_0 and o_0
    if all(x in takeBigToSmall(combination) for x in ['o','s']):
        needed.append('K') #because K is a no-longer used letter
    return needed

#put this all together to discard contradicting alt sets
def removeSubsets(altSets):
    newAltSet = []
    for subset in altSets:
        if len(giveNeeded(subset)) == 0:
            if not(all(x in subset for x in ['A','N']) and 'J' not in subset):
                newAltSet.append(subset)
    print(len(newAltSet))
    return newAltSet

print(removeSubsets(currentAltSets1))
\end{lstlisting}

\section{Alternation sets without contradictions of Type III} \label{sec:alt-sets-II}
\begin{multicols}{3}
\footnotesize
\begin{enumerate}
\item \{\}, \item \{J\}, \item \{C\}, \item \{E\}, \item \{F\}, \item \{A\}, \item \{B\}, \item \{H\}, \item \{O\}, \item \{D\}, \item \{Q\}, \item \{I\}, \item \{G\}, \item \{L\}, \item \{J, E\}, \item \{J, F\}, \item \{C, E\}, \item \{C, A\}, \item \{C, O\}, \item \{C, Q\}, \item \{C, I\}, \item \{C, G\}, \item \{E, B\}, \item \{F, B\}, \item \{A, B\}, \item \{A, O\}, \item \{A, D\}, \item \{A, Q\}, \item \{A, L\}, \item \{B, H\}, \item \{H, D\}, \item \{O, Q\}, \item \{O, I\}, \item \{O, G\}, \item \{D, Q\}, \item \{D, I\}, \item \{D, G\}, \item \{D, L\}, \item \{Q, L\}, \item \{I, G\}, \item \{I, L\}, \item \{G, L\}, \item \{J, E, F\}, \item \{J, F, B\}, \item \{C, E, A\}, \item \{C, A, B\}, \item \{C, A, O\}, \item \{C, A, D\}, \item \{C, A, Q\}, \item \{C, A, I\}, \item \{C, O, Q\}, \item \{C, O, I\}, \item \{C, O, G\}, \item \{C, I, G\}, \item \{C, I, L\}, \item \{A, O, Q\}, \item \{A, D, Q\}, \item \{A, D, G\}, \item \{A, D, L\}, \item \{A, Q, L\}, \item \{O, D, G\}, \item \{O, Q, G\}, \item \{O, Q, L\}, \item \{O, I, G\}, \item \{D, Q, L\}, \item \{D, I, G\}, \item \{D, I, L\}, \item \{D, G, L\}, \item \{Q, I, L\}, \item \{I, G, L\}, \item \{J, E, F, B\}, \item \{C, E, A, B\}, \item \{C, F, A, B\}, \item \{C, A, O, Q\}, \item \{C, A, O, I\}, \item \{C, A, D, Q\}, \item \{C, A, D, I\}, \item \{C, A, D, G\}, \item \{C, A, I, L\}, \item \{C, O, Q, G\}, \item \{C, O, I, G\}, \item \{C, Q, I, L\}, \item \{C, I, G, L\}, \item \{A, B, H, D\}, \item \{A, O, D, G\}, \item \{A, O, Q, L\}, \item \{A, D, Q, L\}, \item \{A, D, G, L\}, \item \{O, D, Q, G\}, \item \{O, D, I, G\}, \item \{O, Q, I, L\}, \item \{O, Q, G, L\}, \item \{D, Q, I, L\}, \item \{D, I, G, L\}, \item \{C, A, B, H, D\}, \item \{C, A, O, D, G\}, \item \{C, A, D, I, G\}, \item \{C, A, D, I, L\}, \item \{C, A, Q, I, L\}, \item \{C, O, Q, I, L\}, \item \{A, O, D, Q, G\}, \item \{O, D, Q, G, L\}, \item \{O, Q, I, G, L\}, \item \{J, C, E, F, A, B\}, \item \{C, F, A, B, H, D\}, \item \{C, A, K, B, H, D\}, \item \{C, A, B, H, D, I\}, \item \{C, A, B, H, D, G\}, \item \{C, A, O, D, Q, G\}, \item \{C, A, O, D, I, G\}, \item \{C, A, O, Q, I, L\}, \item \{C, A, D, Q, I, L\}, \item \{C, A, D, I, G, L\}, \item \{C, O, Q, I, G, L\}, \item \{A, O, D, Q, G, L\}, \item \{O, D, Q, I, G, L\}, \item \{C, F, A, K, B, H, D\}, \item \{C, F, A, B, H, D, I\}, \item \{C, F, A, B, H, D, G\}, \item \{C, A, K, B, H, D, G\}, \item \{C, A, B, H, D, I, G\}, \item \{J, C, E, F, A, B, H, D\}, \item \{C, F, A, K, B, H, D, G\}, \item \{C, F, A, B, H, D, I, G\}, \item \{C, A, K, B, H, D, I, G\}, \item \{C, A, B, H, D, I, G, L\}, \item \{C, A, O, D, Q, I, G, L\}, \item \{J, C, E, F, A, K, B, H, D\}, \item \{J, C, E, F, A, B, H, D, G\}, \item \{C, F, A, K, B, H, D, I, G\}, \item \{C, F, A, B, H, D, I, G, L\}, \item \{C, A, K, B, H, D, I, G, L\}, \item \{N, J, C, E, F, A, B, H, D, I\}, \item \{J, C, E, F, A, K, B, H, D, G\}, \item \{C, F, A, K, B, H, D, I, G, L\}, \item \{C, A, B, H, O, D, Q, I, G, L\}, \item \{N, J, C, E, F, A, B, H, D, I, G\}, \item \{C, F, A, B, H, O, D, Q, I, G, L\}, \item \{C, A, K, B, H, O, D, Q, I, G, L\}, \item \{N, J, C, E, F, A, K, B, H, D, I, G\}, \item \{N, J, C, E, F, A, B, P, H, D, I, M\}, \item \{N, J, C, E, F, A, B, H, D, I, G, L\}, \item \{C, F, A, K, B, H, O, D, Q, I, G, L\}, \item \{N, J, C, E, F, A, K, B, H, D, I, G, L\}, \item \{N, J, C, E, F, A, B, P, H, D, I, G, M\}, \item \{N, J, C, E, F, A, K, B, P, H, D, I, G, M\}, \item \{N, J, C, E, F, A, B, P, H, D, I, G, M, L\}, \item \{N, J, C, E, F, A, B, H, O, D, Q, I, G, L\}, \item \{N, J, C, E, F, A, K, B, P, H, D, I, G, M, L\}, \item \{N, J, C, E, F, A, K, B, H, O, D, Q, I, G, L\}
\end{enumerate}
\end{multicols}

\section{Python code checking for contradictions of Type II and III} \label{sec:contradiction-code-1-and-2}

This program takes the $150$ sets which can still be alternation sets (`CurrentAltSets2') and discarsd any sets with contradictions of Type II and III.

\begin{lstlisting}[language=Python]
import itertools
from updated_alt_sets import currentAltSets2

#getting coefficients
bigToSmall = {
    "A" : {"a", "d", "j"}, "B" : {"b", "d", "j"}, "C" : {"a", "e", "j"},
    "D" : {"a", "d", "l"}, "E" : {"c", "e", "j"}, "F" : {"b", "f", "j"},
    "G" : {"a", "g", "l"}, "H" : {"b", "d", "l"}, "I" : {"a", "e", "o"},
    "J" : {"c", "f", "j"}, "K" : {"b", "h", "l"},
    "L" : {"a", "i", "o"}, "M" : {"b", "f", "p"}, "N" : {"c", "e", "o"},
    "O" : {"a", "g", "q"}, "P" : {"c", "f", "p"}, "Q" : {"a", "i", "q"},
}

#contradictions of type III
zeroToOne = {
    "b":{"a"}, "c":{"a", "b", "f"}, "e":{"d"}, "f":{"d", "e"},
    "g":{"d","e"}, "h":{"d","e","f","g"}, "i":{"d","e","g"}, "l":{"j"},
    "o":{"j","l"}, "p":{"j","l","o"}, "q":{"j","l","o","i"}, "a":set(),
    "d": set(), "j": set(), "p": set(),
}

allElementList = ["A", "B", "C", "D", "E", "F", "G", "H", "I", "J", "K", "L", "M", "N",    "O", "P", "Q"]
allElementSet = set(allElementList)

#turns any imputed set of big letters into small ones
def takeBigToSmall(combination):
    allSmall = set()
    for element in combination:
        allSmall = allSmall.union(bigToSmall[element])
        allSmall.union(bigToSmall[element]))
    return allSmall

#gives small letters that need to be >=0 or there's a type II contradiction
def giveContradictionsII(combination):
    return takeBigToSmall(combination)

#gives small letters that need to be >= 0 or there's a type III contradiction
def giveContradictionsIII(combination):
    allSmall = takeBigToSmall(combination)

    #we are assuming this combination is included
    smallContradictions = set() #these also need to be >=0
    for small in allSmall:
        smallContradictions = smallContradictions.union(zeroToOne[small])
    if {"a","g"}.issubset(allSmall):
        smallContradictions = smallContradictions.union("t")
        smallContradictions.union(zeroToOne[small]))

    return smallContradictions

def giveNeeded(combination): #combination = an alternation set
    #getting togher all small letters that need to be >= 0
    all_contradictions = giveContradictionsII(combination).union(giveContradictionsIII(combination))
    
    #gettin all 3-element subsets of all_contradictions
    contraSubsets = list(itertools.combinations(all_contradictions,3))

    needed = []
    #turning tuples into sets
    contraChangedSubsets = list(set(subset) for subset in contraSubsets)

    #check what big letters need to be in the alt set
    for subset in contraChangedSubsets:
        for element in allElementList:
            if subset == bigToSmall[element] and element not in combination:
                needed.append(element)
    #account for s_0 and o_0
    if all(x in takeBigToSmall(combination) for x in ['o','s']):
        needed.append('K') #because K is a no-longer used letter
    return needed

def removeSubsets(altSets):
    newAltSet = []
    for subset in altSets:
        if len(giveNeeded(subset)) == 0:
            if not(all(x in subset for x in ['A','N']) and 'J' not in subset):
                newAltSet.append(subset)
    print(len(newAltSet))
    return newAltSet

print(removeSubsets(currentAltSets2))
\end{lstlisting}

\section{Complete list of Weyl alternation sets of \texorpdfstring{$\ccc$}{sp\_6(C)}}
\label{sec:alt-sets-final}
\begin{multicols}{3}
\footnotesize
\begin{enumerate}
\item \{\}, \item \{A\}, \item \{C, A\}, \item \{A, B\}, \item \{A, D\}, \item \{C, A, B\}, \item \{C, A, D\}, \item \{C, F, A, B\}, \item \{C, A, D, I\}, \item \{C, A, D, G\}, \item \{A, B, H, D\}, \item \{C, A, B, H, D\}, \item \{C, A, D, I, G\}, \item \{J, C, E, F, A, B\}, \item \{C, F, A, B, H, D\}, \item \{C, A, B, H, D, I\}, \item \{C, A, B, H, D, G\}, \item \{C, A, D, I, G, L\}, \item \{C, F, A, B, H, D, I\}, \item \{C, F, A, B, H, D, G\}, \item \{C, A, B, H, D, I, G\}, \item \{J, C, E, F, A, B, H, D\}, \item \{C, F, A, K, B, H, D, G\}, \item \{C, F, A, B, H, D, I, G\}, \item \{C, A, B, H, D, I, G, L\}, \item \{C, A, O, D, Q, I, G, L\}, \item \{J, C, E, F, A, B, H, D, G\}, \item \{C, F, A, K, B, H, D, I, G\}, \item \{C, F, A, B, H, D, I, G, L\}, \item \{N, J, C, E, F, A, B, H, D, I\}, \item \{J, C, E, F, A, K, B, H, D, G\}, \item \{C, F, A, K, B, H, D, I, G, L\}, \item \{C, A, B, H, O, D, Q, I, G, L\}, \item \{N, J, C, E, F, A, B, H, D, I, G\}, \item \{C, F, A, B, H, O, D, Q, I, G, L\}, \item \{N, J, C, E, F, A, K, B, H, D, I, G\}, \item \{N, J, C, E, F, A, B, P, H, D, I, M\}, \item \{N, J, C, E, F, A, B, H, D, I, G, L\}, \item \{C, F, A, K, B, H, O, D, Q, I, G, L\}, \item \{N, J, C, E, F, A, K, B, H, D, I, G, L\}, \item \{N, J, C, E, F, A, B, P, H, D, I, G, M\}, \item \{N, J, C, E, F, A, K, B, P, H, D, I, G, M\}, \item \{N, J, C, E, F, A, B, P, H, D, I, G, M, L\}, \item \{N, J, C, E, F, A, B, H, O, D, Q, I, G, L\}, \item \{N, J, C, E, F, A, K, B, P, H, D, I, G, M, L\}, \item \{N, J, C, E, F, A, K, B, H, O, D, Q, I, G, L\}
\end{enumerate}
\end{multicols}

\section{Sage code for calculating Weyl alternation sets}
\label{sec:calculating-alt-sets}

In this program, 
\begin{itemize}
\item {\ttfamily weyl\_alternation\_set(lam, mu)} returns the Weyl alternation set for $\lambda = $ {\ttfamily lam} and $\mu = $ {\ttfamily mu}, 
\item {\ttfamily alt\_sets\_with\_mus(lamnum, munum)} varies the coefficients of $\lambda$ from $0$ to {\ttfamily lamnum} and the coefficients of $\mu$ from $0$ to {\ttfamily munum} and returns a list of the alternation sets which appeared along with a $\lambda$ and $\mu$ pair which induces each alternation set,
\item and, similarly, {\ttfamily alt\_sets\_new\_lattice(lamnum,munum)} varies the coefficients of $\lambda$ from $0$ to {\ttfamily lamnum} and the coefficients of $\mu$ from $0$ to {\ttfamily munum} for $\frac{m+k+x+z}{2} \in \Z$ and returns a list of the alternation sets which appeared along with a $\lambda$ and $\mu$ pair which induces each alternation set.
\end{itemize}

\begin{lstlisting}[language=Python]
def weyl_alternation_set(lam, mu=(0,0,0)):
    """
    Finds the Weyl alternation set for lam = mw_1 + nw_2 + kw_3
    and mu = c_1w_1 + c_2w_2 + c_3w+3
    """
    (m_, n_, k_) = lam
    (c1_, c2_, c3_) = mu

    alt_set = set()
    sigma_results_dict = get_sigma_results_dict(lam,mu)

    for s in W:
        if all( [ sigma_results_dict[s][j] >= 0 for j in range(0,3) ] ):
            alt_set.add(s)
    #print('alternation set')
    #print(alt_set)
    return alt_set

def weyl_alternation_set_17(lam, mu=(0,0,0)):
    """
    Finds the Weyl alternation set for lam = mw_1 + nw_2 + kw_3
    and mu = c_1w_1 + c_2w_2 + c_3w+3
    doesn't look at sigmas we know are never in the alternation set to make program more efficient
    """
    (m_, n_, k_) = lam
    (c1_, c2_, c3_) = mu

    alt_set = set()
    sigma_results_dict = get_sigma_results_dict(lam,mu)

    for s in W_17: 
        if all( [ sigma_results_dict[s][j] >= 0 for j in range(0,3) ] ):
            alt_set.add(s)
    #print('alternation set')
    #print(alt_set)
    return alt_set

'''DEFINITIONS'''

# Takes ambient space vector and converts to column matrix over SR
def ambient_to_list(v):
    return [v[i] for i in range(0,3)]

# Takes vector and converts to a list
def vector_to_list(v):
    return [v[i] for i in range(0,3)]

# Take R3 vector and gives coordinates in terms of alpha's
# v = c1 * a1 + c2 * a2 + c3 * a3
def ambient_to_alpha_coords(v):
    return root_matrix.solve_right(ambient_to_vector(v))

def vector_to_alpha_coords(v):
    return root_matrix.solve_right(v)

def ambient_to_vector(v):
    return matrix([v[i] for i in range(0,3)],ring=SR).transpose()

var('m n k c1 c2 c3')
W = WeylGroup(['C', 3], prefix='s')
a = W.domain().simple_roots()
P = W.domain().positive_roots()
[s1, s2, s3] = W.simple_reflections()
e=s1*s1
W_17 = [e, s1, s2, s3, s1*s2, s2*s1, s2*s3, s3*s1, s3*s2, s1*s2*s1, s2*s3*s1, s2*s3*s2, s3*s2*s1, s3*s1*s2, s3*s2*s3, s3*s1*s2*s1, s3*s2*s3*s2] 

# Simple root vectors for the Lie algebra of type C_3
a1 = ambient_to_vector(a[1]); #print('a1'); print(a1)
a2 = ambient_to_vector(a[2]); #print('a1'); print(a2)
a3 = ambient_to_vector(a[3]); #print('a1'); print(a3)

root_matrix = matrix([ambient_to_list(a[i]) for i in range(1,4)],ring=SR).transpose()

# Fundamental weight vectors for the Lie algebra of type C_3
w1 = (1/2) * (2*a1 + 2*a2 + 1*a3)
w2 = (1*a1 + 2*a2 + 1*a3)
w3 = (1/2) * (2*a1 + 4*a2 + 3*a3)

lam = m*w1 + n*w2 + k*w3
mu = c1*w1 + c2*w2 + c3*w3
rho = ambient_to_vector((1/2)*sum(P))

''' OTHER FUNCTIONS WE NEED '''

def get_sigma_results_dict(lam, mu):
    # lam + mu = (m,n,k) + (c1,c2,c3) = (m,n,k,c1,c2,c3)
    #     ^ This is NOT vector addition! We're concatenating the tuples
    #       to pass into the callable.
    return dict(
    [ (s,[component(*(lam+mu)) for component in weyl_action_callable_dict[s]] ) for s in W
    ]
    )

def weyl_actions():
    """
    Returns a dict with key,value pairs: (s, (b_1, b_2, b_3)) where s is an
    element of the Weyl group and b_1, b_2, b_3 are the coefficients
    of a_1, a_2, and a_3, respectively, of s(lambda + rho) - (rho + mu)
    """
    return [(s,vector_to_alpha_coords(s.matrix() * (lam + rho) - (rho + mu))) for s in W]

weyl_action_callable_dict = dict([ (p[0],[fast_callable(p[1][i][0], vars=[m,n,k,c1,c2,c3]) for i in range(0,3)]) for p in weyl_actions()])

def alt_sets_with_mus(lamnum, munum):
    '''
    iterates through lam=(0,0,0) to (lamnum, lamnum, lamnum) and mu=(0,0,0) to (munum, munum, munum)
    prints alternation sets which show up with a lambda and mu that induce it
    '''
    alt_sets = []
    alt_set_exp = []
    
    #for lam = (m,n,k), mu=(x,y,z)
    for m in range(lamnum+1):
        for n in range(lamnum+1):
            for k in range(lamnum+1):
                for x in range(munum+1):
                    for y in range(munum+1):
                        for z in range(munum+1):
                            alt_set = weyl_alternation_set_17(lam=(m,n,k), mu=(x,y,z))
                            if alt_set not in alt_sets: #if a new set
                                alt_sets.append(alt_set) #record alt set
                                alt_set_exp.append((alt_set,(m,n,k),(x,y,z))) #record how we got it
                                print((alt_set,(m,n,k),(x,y,z)))
    return alt_set_exp


def alt_sets_new_lattice(lamnum, munum):
    '''
    iterates through lam=(0,0,0) to (lamnum, lamnum, lamnum) and mu=(0,0,0) to (munum, munum, munum)
    For m+k+x+z divisible by 2,
    prints alternation sets which show up with a lambda and mu that induce it
    '''
    alt_sets = []
    alt_set_exp = []
    
    #for lam = (m,n,k), mu=(x,y,z)
    for m in range(lamnum+1):
        for n in range(lamnum+1):
            for k in range(lamnum+1):
                for x in range(munum+1):
                    for y in range(munum+1):
                        for z in range(munum+1):
                            if (m+k)%2 == 0 and (x+z)%2 == 0: #if its in our lattice
                                alt_set = weyl_alternation_set_17(lam=(m,n,k), mu=(x,y,z))
                                if alt_set not in alt_sets: #if a new set
                                    alt_sets.append(alt_set) #record alt set
                                    alt_set_exp.append((alt_set,(m,n,k),(x,y,z))) #record how we got it
                                    print((alt_set,(m,n,k),(x,y,z)))
    return alt_set_exp

#gives alternation set for a lambda and a mu
#weyl_alternation_set(lam=(2,1,0), mu=(0,0,0))
#weyl_alternation_set_17(lam=(2,1,0), mu=(0,0,0))

#gives all alt sets for lam going up to lamnum and mu going up to munum
#alt_sets_with_mus(lamnum=0,munum=50)

#gives all alt sets for lam going up to lamnum and mu going up to munum for m+k+x+z divisible by 2
alt_sets_new_lattice(lamnum=10, munum=10)

\end{lstlisting}
\end{document}